\definecolor{red}{rgb}{1,0,0}
\definecolor{blue}{rgb}{0,0,1}
\definecolor{green}{rgb}{0,.6,0}
\definecolor{purp}{rgb}{.5,0,.5}
\newtheorem{thm}{Theorem}[section]
\newtheorem{cor}[thm]{Corollary}
\newtheorem{lem}[thm]{Lemma}
\newtheorem{prop}[thm]{Proposition}
\newtheorem{obs}[thm]{Observation}
\theoremstyle{definition}
\newtheorem{rem}[thm]{Remark}
\theoremstyle{definition}
\newtheorem{defn}[thm]{Definition}
\theoremstyle{definition}
\newtheorem{ex}[thm]{Example}
\numberwithin{figure}{section}   
\numberwithin{equation}{section}  
\newcommand{\R}{\mathbb{R}}
\newcommand{\GL}[1]{\mathcal{G} ^ #1}
\newcommand{\GLG}{G^L}
\newcommand{\GLGsig}{G^{\sigma(L)}}
\newcommand{\GC}[2]{#1 ^ {\mathfrak #2}}
\newcommand{\PP}{\mathfrak{C}}
\newcommand{\pp}{\mathfrak{c}}
\newcommand{\MM}{\mathfrak{M}}
\newcommand{\II}{\mathfrak{I}}
\newcommand{\G}{\mathcal{G}}
\newcommand{\Span}{\operatorname{span}}
\newcommand{\sym}{\mathcal{S}}
\newcommand{\symp}{\mathcal{S}_+}
\newcommand{\SG}{\mathcal{S}(G)}
\newcommand{\Symp}{\mathcal{S}_+(G)}
\newcommand{\A}{\mathcal{A}}
\newcommand{\by}{{\bf y}}
\newcommand{\bx}{{\bf x}}
\newcommand{\symm}{\operatorname{Symm}}
\newcommand{\Sp}{\operatorname{Sp}}
\newcommand{\spLA}{\operatorname{\mathfrak{sp}}}
\newcommand{\oml}{{\bf m}}
\newcommand{\soml}{{\bf m}_s}
\newcommand{\tp}{\operatorname{TP}}
\newcommand{\Rnn}{\R^{n\times n}} 
\newcommand{\Rn}{\R^{n}}
\newcommand{\trans}{^{\top}}
\newcommand{\tr}{\operatorname{tr}}
\newcommand{\Z}{\operatorname{Z}}
\newcommand{\ZC}{\Z_{\operatorname{C}}}
\newcommand{\Zell}{\Z_{\ell}}
\newcommand{\M}{\operatorname{M}}
\newcommand{\MC}{\M_{\operatorname{Sp}}}
\newcommand{\mtx}[1]{\begin{bmatrix} #1 \end{bmatrix}}
\newcommand{\spec}{\operatorname{spec}}
\newcommand{\Gc}{\overline{G}}
\newcommand{\diag}{\operatorname{Diag}}
\newcommand{\spspec}{\operatorname{SPspec}}
\newcommand{\uvec}{\operatorname{vec}_\triangle}
\newcommand{\noi}{\noindent}
\newcommand{\x}{\times}
\newcommand{\lam}{\lambda}
\newcommand{\LS}{\Lambda^S}
\newcommand{\bit}{\begin{itemize}}
\newcommand{\eit}{\end{itemize}}
\newcommand{\ben}{\begin{enumerate}}
\newcommand{\een}{\end{enumerate}}
\newcommand{\beq}{\begin{equation}}
\newcommand{\eeq}{\end{equation}}
\newcommand{\bea}{\begin{eqnarray*}}
\newcommand{\eea}{\end{eqnarray*}}
\newcommand{\bean}{\begin{eqnarray}}
\newcommand{\eean}{\end{eqnarray}}
\newcommand{\bpf}{\begin{proof}}
\newcommand{\epf}{\end{proof}\ms}
\newcommand{\bmt}{\begin{bmatrix}}
\newcommand{\emt}{\end{bmatrix}}
\newcommand{\ms}{\medskip}
\newcommand{\beqa}{\begin{array}}
\newcommand{\eeqa}{\end{array}}
\newcommand{\OL}{\overline}
\newcommand{\lp}{\left(}
\newcommand{\rp}{\right)}
\newcommand{\lb}{\left[}
\newcommand{\rb}{\right]}
\newcommand{\lsb}{\left\{}
\newcommand{\rsb}{\right\}}
\newcommand{\wt}{\widetilde}
\newcommand{\wh}{\widehat}
\newcommand{\du}{\mathbin{\,\sqcup\,}}
\title{The Inverse Symplectic Eigenvalue Problem of a Graph}
\author{Himanshu Gupta\thanks{Department of Mathematics and Statistics, University of Regina, Regina, SK S4S0A2, Canada (himanshu.gupta@uregina.ca)} \and Leslie Hogben\thanks{American Institute of Mathematics, Pasadena, CA 91125, USA
(hogben@aimath.org); Department of Mathematics, Iowa State University,
Ames, IA 50011, USA; Department of Mathematics, Purdue University,
West Lafayette, IN 47907, USA.}\and Bryan Shader\thanks{Department of Mathematics \& Statistics, University of Wyoming, Laramie, WY 82071, USA (bshader@uwyo.edu)}\and Tony Wong\thanks{Department of Mathematics, Kutztown University of Pennsylvania, Kutztown, PA 19530, USA (wong@kutztown.edu)}}
\begin{document}
\maketitle

\begin{abstract} 
Symplectic geometry plays an increasingly important role in mathematics, physics and applications, and naturally gives rise to interesting matrix families and properties. One of these is the notion 
of symplectic eigenvalues, whose existence for positive definite matrices is known as Williamson's theorem or decomposition.  This notion of symplectic eigenvalues 
gives rise to inverse problems. 
We introduce the inverse symplectic eigenvalue problem for positive definite matrices described by a labeled graph and  solve it for several families of labeled graphs and all labeled graphs of order  four.  To solve these problems we develop various tools such as the Strong Symplectic Spectral Property (SSSP) and its consequences such as the Supergraph Theorem, the Bifurcation Theorem, and the Matrix Liberation Lemma for symplectic eigenvalues, graph couplings to describe collections of labelings of a graph that produce the same symplectic eigenvalues, and coupled graph zero forcing.  We establish numerous results for symplectic positive definite matrices, including a sharp lower bound on the number of nonzero entries of such a matrix (or equivalently, the number of edges in its graph).  This lower bound is a consequence of a  lower bound on the sum of number of nonzero entries in an  irreducible positive definite matrix and its inverse.

\end{abstract}

\noi {\bf Keywords} symplectic matrices, symplectic eigenvalues of positive definite matrices, inverse problems, strong properties, zero forcing, graphs. 

\noi{\bf AMS subject classification} 15A18, 15B48, 15B57, 05C50.


\section{Introduction}\label{s:intro} 
Let 
\[ 
\Omega_{n} = \left[ \begin{array}{rc} O_p & I_p \\ 
-I_p &O_p \end{array} \right],\]
where $n=2p$ and  $I_p$ (respectively, $O_p$) denotes the $p\times p$
identity (respectively, zero) matrix; when the order is clear from context, we may omit the subscript.  
The {\it symplectic group of order $n$} is denoted by $\Sp(n)$
and is the set of all $n \x n$ real matrices $S$ such that $
S\trans \Omega S=\Omega$.
Evidently, the symplectic matrices of order $n$
are those block matrices  of the form 
\[
\left[ 
\begin{array}{rc} 
S_{11} & S_{12} \\  
S_{21} & S_{22} \end{array}
\right], 
\]
where each block is $p\x p$, $S_{11}\trans S_{21}$ and $S_{12}\trans S_{22}$ are symmetric, and $S_{11}\trans S_{22}-S_{21}\trans S_{12}=I$. 
Examples of symplectic matrices  include
\begin{align}
\label{basic}
\mbox{ $\Omega, \quad \mtx{A & O\\O & (A^\top)^{-1}}$, \quad  and \quad  $\mtx{I & B\\O & I}$},
\end{align}
where $A$ and $B$ are $p \times p$ real matrices, $A$ is invertible, and $B$ is symmetric; the matrices in \eqref{basic} are called \emph{basic symplectic matrices}. 
The $n\times n$ symplectic matrices are closed under products, inverses, and transposes. It is known that 
every $n\times n$ symplectic matrix $S$ is a finite product of matrices  of the forms in \eqref{basic}. Indeed, {it is shown in} \cite{JLX22} 
 that  $S$ is a product of at most five basic symplectic matrices of the third type and/or their transposes.

Symplectic matrices arise geometrically and can be viewed as an analog of real orthogonal matrices:  The standard Euclidean space of dimension $m$ consists of $\mathbb{R}^m$ endowed with 
the bilinear form $\langle \bx, \by\rangle= \bx^\top I \by= \bx^\top \by$. An $m\times m$ orthogonal matrix
$Q$ is precisely a matrix that preserves this bilinear form (that is, 
$\langle Q\bx, Q\by\rangle= \langle \bx, \by \rangle$ for all $\bx, \by \in \mathbb{R}^m$).
The  standard \textit{symplectic vector space of dimension $n=2p$} consists of $\Rn$  endowed with  the bilinear form  $\langle\langle \bx , \by \rangle\rangle= \bx^\top\Omega \by$.  The matrix $S$ is symplectic if and only if 
$\langle\langle S\bx, S \by \rangle\rangle= \langle\langle \bx, \by \rangle\rangle$ for all $\bx, \by \in \mathbb{R}^{n}$; that is, if and only if 
the map $\bx \mapsto S\bx$ preserves the bilinear form $\langle\langle \, , \, \rangle \rangle$.
In short, symplectic matrices are to symplectic spaces as orthogonal matrices are to Euclidean spaces.

The classic spectral theorem for real symmetric matrices asserts that each symmetric matrix $A$ is 
orthogonally similar to a diagonal matrix whose entries are the eigenvalues of $A$.
{The analog of this result in the symplectic setting is based on Williamson's  work on matrix pencils \cite{W36} applied to 
real positive definite matrices, which by definition are symmetric. 
For an elementary proof of Williamson's Theorem, see \cite{I18}.}

\begin{thm}[Williamson's Theorem]
Let $N$ be an $n \times n$ real positive definite matrix with $n=2p$. 
Then there exists a symplectic matrix $S$ and a $p\times p$ diagonal matrix $D$ such that 
\[ S\trans N S= \left[ \begin{array}{cc}
D & O \\
O & D \end{array} \right] .\]
Moreover, the diagonal entries  of $D$ are unique {up to re-ordering}.
\end{thm}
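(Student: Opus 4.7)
The plan is to construct $S$ by simultaneously putting the symmetric form given by $N$ and the symplectic form given by $\Omega$ into canonical form with respect to a common basis. The crucial object is the linear operator $T := N^{-1}\Omega$. Endowing $\R^n$ with the inner product $\langle x, y\rangle_N := x\trans N y$, a direct calculation shows that $T$ is skew-adjoint with respect to this inner product, since
\[
\langle Tx, y\rangle_N = (N^{-1}\Omega x)\trans N y = x\trans \Omega\trans y = -x\trans \Omega y = -\langle x, Ty\rangle_N.
\]

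Next, I would invoke the real canonical form for a skew-adjoint operator on a Euclidean space (obtained in the standard way by diagonalizing the positive definite self-adjoint operator $-T^2$ with respect to the $N$-inner product and refining). Because $T$ is invertible, its eigenvalues are $\pm i\mu_k$ with $\mu_k>0$ for $k=1,\dots,p$, and there exists an $N$-orthonormal basis $e_1,f_1,\dots,e_p,f_p$ on which $Te_k = -\mu_k f_k$ and $Tf_k = \mu_k e_k$. Via $\Omega = NT$ these relations become $\Omega e_k = -\mu_k N f_k$ and $\Omega f_k = \mu_k N e_k$, from which one reads off $e_i\trans \Omega e_j = 0 = f_i\trans \Omega f_j$ and $e_i\trans \Omega f_j = \mu_j \delta_{ij}$. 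Rescaling $\tilde e_k := e_k/\sqrt{\mu_k}$ and $\tilde f_k := f_k/\sqrt{\mu_k}$ normalizes the symplectic pairings to $\delta_{ij}$ while giving $\tilde e_i\trans N \tilde e_j = \tilde f_i\trans N \tilde f_j = \delta_{ij}/\mu_i$. Taking $S$ to be the matrix whose columns are $\tilde e_1,\dots,\tilde e_p,\tilde f_1,\dots,\tilde f_p$ therefore produces $S\trans \Omega S = \Omega$ (so $S$ is symplectic) together with $S\trans N S = \mtx{D & O \\ O & D}$ where $D = \diag(1/\mu_1,\dots,1/\mu_p)$.

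For uniqueness, suppose $S$ satisfies $S\trans N S = \mtx{D & O \\ O & D}$ and $S\trans \Omega S = \Omega$. The symplectic condition, rewritten as $\Omega S^{-\top} = S\Omega$, yields
\[
\Omega N = \Omega \cdot S^{-\top}\mtx{D & O \\ O & D} S^{-1} = S\,\Omega\mtx{D & O \\ O & D} S^{-1} = S\mtx{O & D \\ -D & O} S^{-1},
\]
so $\Omega N$ is similar to $\mtx{O & D \\ -D & O}$, whose spectrum is $\{\pm i d_k : k=1,\dots,p\}$. Since the spectrum of $\Omega N$ depends only on $N$, the multiset of diagonal entries of $D$ is determined.

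The main delicate point is the invocation of the skew-adjoint canonical form in the $N$-weighted inner product, and particularly the observation that the \emph{same} positive scalar $\mu_k$ governs both the $Te_k$ and $Tf_k$ equations, so that a single diagonal matrix $D$ (rather than two a priori distinct ones) ends up in both diagonal blocks of $S\trans NS$. This pairing is built into the $2\times 2$ block structure of the canonical form for a skew operator, and is what ultimately makes the symplectic eigenvalue associated to each conjugate pair a single scalar rather than an unordered pair.
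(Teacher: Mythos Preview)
Your argument is correct. The construction via the skew-adjoint operator $T=N^{-1}\Omega$ on $(\R^n,\langle\cdot,\cdot\rangle_N)$ is a standard and clean route: the invertibility of $T$ forces purely imaginary eigenvalues in conjugate pairs, the $N$-orthonormal canonical basis exists by the real spectral theorem for skew-adjoint operators, and your rescaling simultaneously normalizes $\Omega$ and diagonalizes $N$ with the same $D$ in both blocks. The uniqueness computation is also fine; the identity $\Omega S^{-\top}=S\Omega$ follows from $S^\top\Omega S=\Omega$ as you use it.

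Regarding comparison with the paper: the paper does \emph{not} supply its own proof of Williamson's Theorem. It states the result and refers the reader to Williamson's original paper and to Ikramov's elementary proof. Your approach is essentially the same as the one commonly attributed to that line of elementary proofs (and in particular is close in spirit to Ikramov's). The uniqueness portion of your argument is exactly the content of the paper's Proposition~\ref{sympev}, which the authors do prove: they show $\Omega N$ is similar to $\Omega(D\oplus D)$ via the symplectic $S$, and read off the spectrum $\{\pm d_j\sqrt{-1}\}$. So on uniqueness your proof and the paper's Proposition~\ref{sympev} coincide; on existence you have supplied what the paper outsources to the literature.
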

The $p$ diagonal entries of the diagonal matrix $D$ in Williamson's Theorem  are called the {\it symplectic eigenvalues of $N$}; the multiset of $p$ symplectic eigenvalues of $N$ is called the \emph{symplectic spectrum of $N$} and is denoted by $\spspec(N)$.
Since $S\trans NS$ is congruent to $N$, and $N$ is positive definite, each symplectic eigenvalue 
is positive.  The \textit{multiplicity} of the symplectic eigenvalue $\lambda$ of $N$
is defined to be the number of occurrences of $\lambda$ on the diagonal of $D$. A symplectic eigenvalue is \textit{simple}
if its multiplicity is one. 

Recent research on the symplectic eigenvalues of positive definite matrices includes
(a) study of basic mathematical properties \cite{Dopico-Johnson},
(b) extension of classical eigenvalue inequalities to the symplectic setting \cite{BJ15,J21,P22,SS22}, 
(c) numerical algorithms for calculating the symplectic eigenvalues \cite{SABT21}, and 
(d) their central roles in understanding  oscillations in the Hamiltonian dynamics setting, 
quantum entanglement and information, and optics (see \cite{N21} and 
references therein).

The next result is a well-known method for theoretically calculating symplectic eigenvalues that suits our purposes. 
\begin{prop}
\label{sympev}
Let $N$ be an $n\times n$ real positive definite matrix with $n=2p$.
Then 
\ben[$(a)$]
    \item 
    \label{sympev-a}
    Each eigenvalue of $\Omega N$ is purely imaginary and nonzero.
    \item
     \label{sympev-b}
    The moduli of the eigenvalues of $\Omega N$ are the symplectic eigenvalues of $N$.
    \item  
    \label{sympev-c}
    The multiplicity of $\lambda$ as a symplectic eigenvalue of $N$ is half the number 
     of eigenvalues of $\Omega N$ having moduli  $\lambda$. This also equals the dimension of the nullspace of 
     $\Omega N-\lambda \sqrt{-1} I$.
\een 
\end{prop}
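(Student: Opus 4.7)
The plan is to apply Williamson's Theorem to reduce $\Omega N$ to a simple block form via similarity, then read off the eigenvalues directly. Let $S$ be a symplectic matrix and $D$ a $p\times p$ diagonal matrix with (positive) diagonal entries $d_1,\ldots,d_p$ such that $S^\top N S = D'$, where $D':=\mathrm{diag}(D,D)$. Solving for $N$ gives $N = (S^\top)^{-1} D' S^{-1}$.

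The first key step is to rewrite $\Omega N$ as a conjugate of $\Omega D'$. From $S^\top \Omega S = \Omega$ I get $(S^\top)^{-1} = \Omega S \Omega^{-1}$, so
\[
\Omega N = \Omega (S^\top)^{-1} D' S^{-1} = \Omega \cdot \Omega S \Omega^{-1} D' S^{-1} = -S\Omega^{-1} D' S^{-1} = S(\Omega D')S^{-1},
\]
using $\Omega^2=-I$ and hence $\Omega^{-1}=-\Omega$. Thus $\Omega N$ is similar to $\Omega D'$, and it suffices to analyze $\Omega D'$.

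The second step is direct: $\Omega D' = \begin{bmatrix} O & D \\ -D & O\end{bmatrix}$, and after a suitable permutation similarity this is the direct sum of $2\times 2$ blocks $\begin{bmatrix}0 & d_i \\ -d_i & 0\end{bmatrix}$ for $i=1,\ldots,p$. Each such block has the two distinct purely imaginary nonzero eigenvalues $\pm d_i\sqrt{-1}$, so the full spectrum of $\Omega D'$ (and hence of $\Omega N$) consists of $\{\pm d_i \sqrt{-1}:i=1,\ldots,p\}$ with multiplicities matching those in $\spspec(N)$. This yields \eqref{sympev-a} and \eqref{sympev-b}, and also the multiplicity count in \eqref{sympev-c}: the $2k$ eigenvalues of $\Omega N$ of modulus $\lambda$ correspond to the $k$ copies of $\lambda$ in $\spspec(N)$.

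The final step handles the nullspace assertion in \eqref{sympev-c}. Since each $2\times 2$ block above has distinct eigenvalues, $\Omega D'$ is diagonalizable over $\mathbb{C}$; therefore so is its conjugate $\Omega N$. Consequently, for any symplectic eigenvalue $\lambda$ the geometric multiplicity $\dim \nul(\Omega N - \lambda \sqrt{-1}\, I)$ equals its algebraic multiplicity, which by the similarity is exactly the number of indices $i$ with $d_i = \lambda$, i.e.\ the multiplicity of $\lambda$ as a symplectic eigenvalue. I do not foresee a significant obstacle here: the only point requiring care is tracking the conjugation that converts $\Omega N$ into $\Omega D'$ and verifying that $\Omega D'$ is diagonalizable, both of which are short computations once Williamson's Theorem is in hand.
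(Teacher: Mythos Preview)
Your proof is correct and follows essentially the same approach as the paper: apply Williamson's Theorem, show $\Omega N$ is similar to $\Omega(D\oplus D)$ via the symplectic relation (the paper uses $S\Omega S^\top=\Omega$ directly while you equivalently invert $S^\top\Omega S=\Omega$), and read off the eigenvalues $\pm d_j\sqrt{-1}$. Your explicit diagonalizability argument for the nullspace claim in \eqref{sympev-c} is a welcome addition, since the paper simply asserts that statements \eqref{sympev-a}--\eqref{sympev-c} ``readily follow.''
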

\begin{proof}
By Williamson's Theorem, there exists an $n\times n$ symplectic matrix $S$
such that $S\trans N S= D \oplus D$, where $D=\diag(d_1, \ldots,d_p)$  whose diagonal entries are the 
symplectic eigenvalues of $N$. Since $S$ is symplectic, 
so is $S\trans$.  Thus, $S \Omega S\trans=\Omega$ and  $\Omega N= S\Omega(D\oplus D) S^{-1}$.  Hence $\Omega N$
is similar to 
\[ 
\Omega (D\oplus D)= \mtx {\begin{array}{rc} O & D \\  -D & O 
\end{array} },
\]
whose eigenvalues are $\pm d_j\sqrt{-1} $ for $j=1,\ldots, p$.
Statements \eqref{sympev-a}--\eqref{sympev-c} readily follow.  
\end{proof}

We use the notation $\spec(A)$ to denote the (standard) spectrum of a square matrix $A$.

 \begin{ex}
\label{ex:directsum}
Let $A$ and $B$ be $p\times p$ positive definite matrices, and 
$N=A\oplus B$.   
Since $(\Omega N)^2=\mtx{-BA & O\\O & -AB}$ and $\spec(AB)=\spec(BA)$, the 
(standard) eigenvalues of $\Omega N$ are the square roots of the (standard) eigenvalues of $-AB$.
If we take $B=A^{-1}$, then $AB=I$ and the symplectic eigenvalues of $A \oplus B$ are all equal to one. If we take $B=A$, then $AB=A^2$ and the symplectic eigenvalues of $A \oplus A$ are the (standard) eigenvalues of $A$. 
\end{ex}

The set $\{Q\trans Q\colon Q \text{ is orthogonal}\}$ is equal to $\{I\}$. The analogous set in the symplectic setting  $\{S\trans S\colon S \text{ is symplectic}\}$ is richer, in terms of both matrices and graphs.  Williamson's Theorem implies that 
every matrix of the form $S\trans S$ with $S$  symplectic has each of its symplectic eigenvalues equal to one,  
while the identity matrix
$I$ is the  
only $n\times n$ symmetric matrix having each eigenvalue equal to one.
For the orthogonal case the corresponding graph is just the empty graph. However, for the symplectic case there are many non-trivial graphs. 
In particular, one is the only symplectic eigenvalue of the $n \times n$ matrix 
\begin{align}
\label{spec_arb_ex}
\mtx{ I & J \\
      O & I
}^\top \mtx{ I & J \\
     O & I}
      = \mtx{I & J \\ J &  pJ+I},
\end{align}
 where $J$ is the all ones matrix (see Example \ref{pK1veeKp} for further discussion).  In Section \ref{s:PD}, we list  several equivalent characterizations of $n\times n$ positive definite matrices that have a single eigenvalue (of multiplicity $p$), give several other  examples of such matrices, and establish some results on the sparsity of such matrices. 

In the setting of spectra of real symmetric matrices, there is a rich body of work on the relationship 
between the spectra and the associated graph of the matrix. In  this work 
we initiate the study of the analogous 
relationship in the symplectic setting. The next definitions allow us to make this more precise. 
 These will seem unduly formal to those familiar with the IEP-$G$, since as we will see, the  labeling of the graph does not affect the (standard) eigenvalues.  However, the corresponding symplectic problem is more subtle: while many relabelings preserve the symplectic eigenvalues, not all do. 
 
 Throughout this paper,  the term \emph{graph} means {an unlabeled,} simple, undirected, and finite graph (with nonempty vertex set), whereas the term \emph{digraph} means {an unlabeled} directed graph.
 A \emph{labeled graph} $G^L$ of order $n$ has a vertex set $V(G^L)=\{1,\dots,n\}$ and  an edge set $E(G^L)$ that is a set of two element subsets of vertices. For such a labeled graph $G^L$,   $\sym(\GLG)$   denotes the set of all symmetric  matrices $A=[a_{ij}]$ 
 where for $i\ne j$, $a_{ij}\neq 0$ if and only if   $ij$ is an edge of $G^L$; in the literature, the labeling is suppressed and this is denoted by $\sym(G)$. 
 For a permutation $\sigma$ of $1,\ldots, n$,  the labeled graph obtained from $G^L$ by re-labeling 
the vertex labeled $i$ by the label $\sigma(i)$ is  denoted by $\GLGsig$. Let $P_{\sigma}$ be the permutation matrix corresponding to $\sigma$, i.e., the matrix obtained from $I$ by placing column $\sigma(i)$ of $I$ in column $i$ of $P_{\sigma}$ .
Then the  matrices $A$ and $P_\sigma AP_\sigma^\top$ are permutationally similar, and 
$A \in  \sym(\GLG)$ if and only if $P{_\sigma} A P_\sigma^\top \in  \sym(\GLGsig)$.
 Thus, the inverse eigenvalue problem for $\sym(\GLG)$ and $\sym(\GLGsig)$
are the same. That is, for  the IEP-$G$,  the labeling of the vertices of $G$ is immaterial, so it is reasonable to suppress the labeling in the notation. 

For symplectic eigenvalues, there is more subtlety.  Unlike the (standard) eigenvalue case, there are constraints on the permutation $\sigma$ that are needed to guarantee the symplectic eigenvalues are preserved. 
That is, not all permutation similarities preserve symplectic eigenvalues. 
\begin{ex}
 Consider
\[ 
N= \left[\begin{array}{rrrr}
2 & 0 & 1 & 1 \\
0 & 2 & 1 & 0 \\
1 & 1 & 2 & 0 \\
1 & 0 & 0 & 2
\end{array}\right]\] 
and $\sigma=(1\, 2\, 4)$.
Then 
\[ 
P_{\sigma} = \left[ \begin{array}{cccc}
0 & 0 & 0 & 1\\
1& 0 & 0 & 0\\
0 & 0 & 1 & 0\\
0& 1 & 0 & 0 
\end{array} 
\right] ,
\quad \mbox{ and } \quad 
 P_\sigma NP_\sigma^\top=
\left[\begin{array}{rrrr}
2 & 1 & 0 & 0 \\
1 & 2 & 1 & 0 \\
0 & 1 & 2 & 1 \\
0 & 0 & 1 & 2
\end{array}\right] .
\] 
The (approximate)  symplectic spectra  of $N$ and $ P_\sigma NP_\sigma^\top$ are  $\{ 1.902, 1.176\}$  and $\{3.078, {0.727}\}$,
respectively  (recall these can be computed as in Proposition \ref{sympev}).  
The example above, which is illustrated in Figure \ref{fig:1}, shows that the symplectic eigenvalues for these two labelings of the path on four vertices  need to be handled separately.

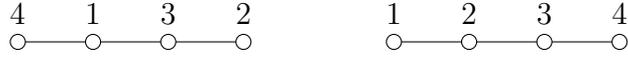
\begin{figure}[H]
\begin{center}
\begin{tikzpicture}
    \draw(1,0)--(4,0);\foreach\i in{1,2,3,4}{\draw[fill=white](\i,0)circle(0.1);};
    \node[above]at(1,0.1){$4$};\node[above]at(2,0.1){$1$};\node[above]at(3,0.1){$3$};\node[above]at(4,0.1){$2$};

    \draw(6,0)--(9,0);\foreach\i in{1,2,3,4}{\draw[fill=white](\i+5,0)circle(0.1);};
    \node[above]at(6,0.1){$1$};\node[above]at(7,0.1){$2$};\node[above]at(8,0.1){$3$};\node[above]at(9,0.1){$4$};
 \end{tikzpicture}
  \caption{Two labelings $L$ and $\sigma(L)$ of $P_4$ where the matrices $N\in \symp(P_4^L)$ and $P_\sigma NP_{\sigma}\trans\in \symp(P_4^{\sigma(L)})$ have different symplectic eigenvalues. 
 \label{fig:1}}
 \end{center}
\end{figure}
\end{ex}
Thus, it is imperative when discussing 
symplectic eigenvalues of matrices associated with a labeled graph $G^L$ that the vertex labels of $G^L$ be taken into consideration. 
  Given a labeled graph $G^L$, we will use $\symp(\GLG)$ to denote the set of  positive definite matrices  $N=[n_{ij}]$   where $n_{ij}\neq 0$ if and only if  $i=j$ or  $ij$ is an edge of $G^L$ (note when studying the IEP-$G$,  $\Symp$ is commonly used to denote the analogous set of positive semidefinite matrices).

 Given an $n\x n$ positive definite matrix $N$, it is  useful to talk about the \emph{labeled graph of $N=[n_{ij}]$}, denoted by $\GL L(N)$, which has vertices $1,\dots,n$ and edge set $\{ij\colon n_{ij}\neq 0$ and  $i\ne j\}$.

We are now able to begin studying  spectral problems related to  $\symp(G^L)$, where $G^L$ is a labeled graph on $n=2p$ vertices, such as:
\ben[(a)]
\item\label{item-a} Which multi-sets of $p$ positive real numbers are the symplectic spectra of  {matrices} in  $\symp(G^L)$?
 \item\label{item-d} Which {ordered} partitions  of $p$ can be realized as  the {ordered} lists of  multiplicities of symplectic eigenvalues of {matrices} in  $\symp(G^L)$?
\item\label{item-b} What is the largest multiplicity of a symplectic eigenvalue among the matrices in  $\symp(G^L)$?
\item\label{item-dx}  
Which labeled graphs allow a positive definite matrix having only one symplectic eigenvalue?
\een

 Questions \eqref{item-a}, \eqref{item-d}, and \eqref{item-b} are the analogs of the  well-studied inverse eigenvalue problem, inverse multiplicity problem, and maximum multiplicity problem; 
we call these the \textit{inverse symplectic eigenvalue problem for labeled graphs} (the ISEP-$G$, for short), the \emph{inverse symplectic multiplicity problem for labeled graphs}, and the \emph{maximum symplectic  multiplicity problem for labeled graphs}. 
Question  \eqref{item-dx} can be thought of as the symplectic analog of the study of graphs that allow a symmetric matrix having only two distinct eigenvalues.  As noted in \cite{F+13}, such a graph allows an orthogonal symmetric matrix, and as shown in Section \ref{s:PD}, a graph that allows a positive definite matrix having only one symplectic eigenvalue allows a symplectic positive definite matrix.
The rest of this paper  is devoted to studying various aspects of these problems.  

In Section \ref{s:order4} we give a complete resolution of the ISEP-$G$ for all labeled graphs of order four by applying the
theory developed in Section \ref{s:strong}
 and developing additional tools for solving the ISEP-$G$.  
Empty graphs (those with no edges and denoted by $\OL{K_n}$)
are the only graphs for which each multi-set of real numbers is the spectrum of some $ A \in \SG $.
 However, there are many examples of labeled graphs $G^L$ on $n=2p$ vertices such that 
every multi-set of $p$ positive real numbers is the symplectic spectrum of some $N\in\symp(G^L)$. 
We call such  
labeled graphs \textit{symplectic spectrally arbitrary} and study such  graphs in Section \ref{s:ssa}.  Necessarily, if $G^L$ is a  symplectic spectrally arbitrary labeled graph, then  $\symp(G^L)$ contains a matrix with just one symplectic eigenvalue, and hence results in  Section \ref{s:PD} are useful in Section \ref{s:ssa}.

In the IEP-$G$ there are various conditions (e.g., the Strong Arnol'd Property, the Strong Spectral Property, 
the Strong Multiplicity Property) that have proven to be very useful.  In Section \ref{s:strong} we introduce 
the symplectic analog of the Strong Spectral Property, called the Strong Symplectic Spectral Property or SSSP, state several useful theorems (the Supergraph Theorem, the Bifurcation Theorem, the Matrix Liberation Lemma), and  illustrate how to use these theorems
to establish several results about the ISEP-$G$ { (some of the proofs that are similar to those for the IEP-$G$ are deferred to Appendix \ref{appendix}). In particular, in Section \ref{s:strong} we resolve the  inverse symplectic multiplicity problem  for any labeled graph  $G^L$ such that there is a matrix $\symp(G^L)$ that has only one symplectic eigenvalue and  has the SSSP. 

In Section \ref{s:zeroforcing}, we introduce a new type of zero forcing (coupled zero forcing) and show that  the coupled zero forcing number is an upper bound for maximum symplectic multiplicity.  We also characterize all graphs with coupled zero forcing number equal to one and solve the ISEP-$G$ for several additional labeled graph families.} 
We conclude in Section \ref{s:conclude} with a summary of the labeled graphs for which we have solved the ISEP-$G$ and the tools used to solve them.

\ms

The remainder of this section contains some standard notation and terminology. We use $|\mu|$  to denote the modulus (also known as the magnitude or absolute value) of the complex number $\mu$. 

For an $m\times n$ matrix 
$A$, $\alpha \subseteq \{1,\ldots, m\}$ and $\beta\subseteq\{1,\ldots n\}$,  we define the  $A[\alpha, \beta]$ (respectively, $A(\alpha, \beta)$) to be the submatrix of $A$ obtained by keeping (respectively, deleting) 
those rows in $\alpha$ and those columns in $\beta$.
When $A$ is square, we abbreviate 
$A[\alpha,\alpha]$ to $A[\alpha]$, and $A(\alpha, \alpha)$ to $A(\alpha)$.
An $n\times n$ matrix $A$  is \textit{reducible} provided there is a permutation matrix $P$ such that 
$PAP^\top$ has the block matrix form 
\[
\begin{bmatrix}
A_{11} & A_{12}
\\
O & A_{22}
\end{bmatrix},
\]
where 
$A_{11}$ and $A_{22}$ are square, non-vacuous matrices. 
A matrix is \textit{irreducible} if it is not reducible  (in particular, each $1\times 1$ matrix is irreducible).

 A (simple,  undirected) graph $G=(V(G),E(G))$ has a finite nonempty set of vertices $V(G)$ and a set of edges $E(G)$ where each edge is a two-element set of vertices; edge $\{u,v\}$ is usually denoted by $uv$.  
 A graph $G$ is a \emph{subgraph} of a graph $H$ if $V(G)\subseteq V(H)$ and $E(G)\subseteq E(H)$; in this case, we also say that $H$ is a \emph{supergraph} of  {$G$}.  For $U\subseteq V(G)$, the \emph{subgraph  of $G$ induced by $U$}, denoted by $G[U]$, is the subgraph of $G$ with $V(G[U])=U$ and $E(G[U])=\{uv\colon uv \in E(G) \mbox{ and } u,v\in U\}$; $G[U]$ is also called an 
\emph{induced subgraph} of $G$. 
The \emph{complement} of $G$, denoted by $\OL{G}$, is the graph having vertex set $V(G)$ and edge set that includes edge $uv$ if and only if $uv\not\in E(G)$. 
The \emph{union} of graphs $G$ and $G'$, denoted by  $G\,\cup\, G'$, is the graph with $V(G\cup G')=V(G)\cup V(G')$ and $E(G\cup G')=E(G)\cup E(G')$; if $V(G)\cap V(G')=\emptyset$, this is a \emph{disjoint union} of graphs and is denoted by $G\du G'$. If $V(G)$ and $V(G')$ are disjoint, the \emph{join} of $G$ and $G'$, denoted by $G\vee G'$,  is the graph obtained from $G\du  G'$ by adding all possible edges between $V(G)$ and $V(G')$.  
 
Vertices $u,v\in V(G)$ are \emph{adjacent}  if $uv\in E(G)$. 
The (open) \emph{neighborhood} of a vertex $v$ is $N_G(v)=\{u\in V(G)\colon uv\in E(G)\}$, and the \emph{closed neighborhood} is $N_G[v]=N_G(v)\cup\{v\}$; when $G$ is clear from the context, the subscript may be omitted.  The \emph{degree}  of a vertex $v\in V(G)$ is $\deg_G(v)=|N_G(v)|$. 
A vertex of degree one is called a \emph{leaf}. The minimum degree among the vertices of the graph $G$ is denoted by $\delta(G)$.   A \emph{matching} in a graph $G$ with $V(G)=\{v_1,\dots,v_n\}$ is a set  $\{v_{i_1}v_{j_1},\dots, v_{i_k}v_{j_k}\}\subseteq E(G)$ such that  all the vertices $v_{i_s}, v_{j_t}, s,t=1,\dots,k$ are distinct. A \emph{perfect matching} in  $G$ is a matching that includes all vertices of $G$.

 A \emph{digraph} $\Gamma=(V(\Gamma),E(\Gamma))$ is a simple directed graph. The  set of vertices $V(\Gamma)$ is finite  and nonempty. An \emph{arc} is an ordered pair of vertices and $E(\Gamma)$ denotes the set of arcs of $\Gamma$, so \emph{loops} (arcs of the form $(v,v)$) and both the arcs $(u,v)$ and $(v,u)$ are allowed, but duplicate arcs are not allowed.  
 A digraph $\Gamma$ is 
\textit{strongly connected}
provided for each ordered pair of vertices $(u,v)$ of $\Gamma$ there is a directed walk in $\Gamma$ from $u$ to $v$. The adjacencey matrix $\A_\Gamma$ of $A=[a_{ij}]\in\Rnn$ has $V(\Gamma)=\{1,\dots,n\}$ and $(i,j)\in E(\Gamma)$ if and only if $a_{ij}\ne 0$.
Thus, $\Gamma$ is strongly connected if and only if  the  adjacency matrix of $\Gamma$ is irreducible. The \textit{strong components} of $\Gamma$ are the maximal induced {strongly connected} subdigraphs of $\Gamma$. 
The vertex sets $V_1$, $\ldots, V_k$ of the strong components partition the vertices of $\Gamma$ and can be ordered so that if $(u,v)$ is an arc of $\Gamma$
with $u\in V_i$ and $v \in V_j$, then $i\leq j$. Given an $n\x n$ positive definite matrix $N$ with $n=2p$ of the form $\mtx{A & B\\B\trans & C}$ with each block $p\times p$, it is also sometimes useful to talk about the \emph{labeled digraph of $B=[b_{ij}]$}, denoted by $\Gamma^L (B)$, which has vertices $1,\dots,p$ and arc set $\{(i,j): b_{ij}\neq 0
\}$.

Graphs $G$ and $H$ are 
isomorphic, denoted $G \cong H$ provided there is a bijection $\phi$ between the vertex set of $G$ and the vertex set of $H$ such that $u$ and $v$ are adjacent in $G$ if and only if $\phi(u)$ and $\phi(v)$ are adjacent in $H$.  The notion of isomorphic digraphs is defined similarly. 
 

\section{Symplectic positive definite 
matrices}\label{s:PD}

We begin the study of the ISEP-$G$ with a problem whose analog for the IEP-$G$ has a very simple resolution,  but for the ISEP-$G$ is much more complex.  An $n\times n$ symmetric matrix  $A$ that has   only one eigenvalue is a scalar matrix, and hence its graph  $\G(A)\cong \OL{K_n}$ is disconnected if $n>1$. 
However, in the symplectic setting the situation is very different, as we saw that the matrix in Equation (\ref{spec_arb_ex}) is both symplectic and positive definite, and has exactly one symplectic eigenvalue.

 If $N$ is positive definite and $\lambda $ is a positive real number, then each symplectic eigenvalue of $N$ is  equal to $\lambda$ if and only if $(1/\lambda )N$ is positive definite with each symplectic 
eigenvalue equal to one. Thus we focus on positive definite matrices having each eigenvalue equal to one. The statements in the next remark are straightforward and well-known, but we list them for convenience.
\begin{rem}\label{r:sympPDequiv}
 Let $N$ be a positive definite matrix.  Since the symplectic matrices are closed under transposes and  inverses, it is clear that each symplectic eigenvalue of $N$ is one if and only if 
 $N=S\trans S$ for some symplectic matrix $S$. Since the symplectic matrices are closed under  products, this implies 
 $N$ is symplectic. 
 
 Now assume $N$ is a symplectic positive definite matrix, so $N\trans \Omega N=\Omega$.
Pre-multiplying both sides of this  equation by $\Omega$,
and using $\Omega^2=-I$ and $N\trans=N$ gives $(\Omega N)^2=-I$, which implies each symplectic eigenvalue of $N$ is equal to one. \end{rem}
 
 We call a symplectic positive definite matrix 
a \emph{sympPD} matrix.
As was done in Section \ref{s:intro}, we can use the form $S\trans S$ with $S$ symplectic  to construct examples of sympPD matrices with specific graphs.

 \begin{ex}\label{pK1veeKp}
 The matrix 
 in \eqref{spec_arb_ex}, which is a sympPD matrix, has  labeled graph  
$(\overline{K_{p}} \vee K_{p})^{M}$
where the vertices of $\overline{K_p}$ are $1,\ldots, p$
and of $K_p$ are $p+1, \ldots, 2p$  (we use $M$ to
denote a labeling such that the graph includes  the perfect matching with  edges $i(p+i), i=1,\dots,p$; matchings 
will play an important role in Section \ref{s:zeroforcing}).     
\end{ex}

\begin{ex}\label{KppSympPD} 
Let $B=[b_{ij}]$ be a symmetric   orthogonal $p\times p$ matrix,  $S=\mtx{I & B\\O & I}$, and  $N=S\trans S=\mtx{ I & B \\      B & 2I }$, so $N$ is a sympPD matrix.
  By choosing a symmetric orthogonal matrix $B$ with all nonzero entries, e.g., a Householder matrix created from a vector with all nonzero entries, 
we see that $N=\mtx{I & B \\
       B &  2I}\in \symp(K_{p,p}^M)$ where  $K_{p,p}^M$ is the labeled graph with partite sets $\{1,\dots,p\}$ and $\{p+1,\dots,2p\}$.
\end{ex}

 In Section \ref{ss:sympPD-char} we summarize and apply  known results about symplectic positive definite matrices to characterize these matrices, {noting that} this is equivalent to {characterizing matrices with} all symplectic eigenvalues equal. In Section \ref{ss:TPn} we construct a family of 
 sparse graphs, which we call triangular paths, that allow all symplectic eigenvalues to be equal. In Section \ref{ss:PDirred-sparse} we establish a bound on  the minimum number of edges in a graph that allow all symplectic eigenvalues to be equal, showing that triangular paths have as few edges as possible among {connected} labeled graphs that allow all symplectic eigenvalues to be equal. In Section \ref{ss:forbid} we identify structures that prevent  a graph from allowing all symplectic eigenvalues to be equal.

\subsection{Characterization}\label{ss:sympPD-char}

Dopico and Johnson established the next theorem in 2009. \begin{thm}\label{t:DJ} {\rm \cite{Dopico-Johnson}} 
   The set of symplectic positive definite matrices is
\begin{equation}
\label{symppsd}
\renewcommand{\arraystretch}{1.1}
\lsb\left[ \begin{array}{cc} 
N_{11} & N_{11}W \\  
WN_{11} & N_{11}^{-1} + WN_{11}W
\end{array}
\right] 
 \colon N_{11} \mbox{ is positive definite and $W$ is symmetric }\rsb.  
\end{equation}
\end{thm}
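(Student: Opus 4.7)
The plan is to prove the set equality in Theorem~\ref{t:DJ} by verifying the two inclusions separately: the forward inclusion via the characterization of symplectic positive definite matrices as $N = S^\top S$ with $S$ symplectic (Remark~\ref{r:sympPDequiv}), and the reverse inclusion by solving the defining equation $N^\top \Omega N = \Omega$ in block form.

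For the forward direction, I would observe that every matrix in the set on the right-hand side of \eqref{symppsd} admits the congruence
\[
\mtx{N_{11} & N_{11}W \\ WN_{11} & N_{11}^{-1} + WN_{11}W} \;=\; \mtx{I & W \\ O & I}^\top \mtx{N_{11} & O \\ O & N_{11}^{-1}} \mtx{I & W \\ O & I},
\]
which is verified by a direct block multiplication using the symmetry of $W$. The middle factor equals $A^\top A$ for $A = \mtx{N_{11}^{1/2} & O \\ O & N_{11}^{-1/2}}$; since $N_{11}^{1/2}$ is symmetric, $A$ is a basic symplectic matrix of the second type in \eqref{basic}, and $\mtx{I & W \\ O & I}$ is a basic symplectic matrix of the third type. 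Their product $T = A\mtx{I & W \\ O & I}$ is therefore symplectic, so $N = T^\top T$, and Remark~\ref{r:sympPDequiv} implies that $N$ is symplectic positive definite.

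For the converse, let $N$ be a symplectic positive definite matrix and write it in conformal block form $N = \mtx{N_{11} & N_{12} \\ N_{12}^\top & N_{22}}$ with $p \times p$ blocks. As a principal submatrix of a positive definite matrix, $N_{11}$ is positive definite and, in particular, invertible. Using $N = N^\top$, the symplectic relation $N^\top \Omega N = \Omega$ becomes $N\Omega N = \Omega$; computing the $(1,1)$ and $(1,2)$ blocks of both sides of this identity yields
\[
N_{11}N_{12}^\top \;=\; N_{12}N_{11} \qquad \text{and} \qquad N_{11}N_{22} - N_{12}^2 \;=\; I.
\]
Setting $W = N_{11}^{-1}N_{12}$ and multiplying the first equation on the left and on the right by $N_{11}^{-1}$ gives $W^\top = N_{12}^\top N_{11}^{-1} = N_{11}^{-1}N_{12} = W$, so $W$ is symmetric and $N_{12} = N_{11}W$, $N_{12}^\top = WN_{11}$. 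Solving the second equation for $N_{22}$ then produces $N_{22} = N_{11}^{-1} + WN_{11}W$, which matches the claimed form.

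The remaining two block identities from $N\Omega N = \Omega$ are redundant: the $(2,1)$ block equation is the transpose of the $(1,2)$ block equation, and the $(2,2)$ block equation follows from the $(1,1)$ one by transposition. Thus no additional constraints arise. The only step requiring genuine care is the derivation that $W$ is symmetric from the first block equation; beyond that, the argument is routine block arithmetic, and I do not foresee any serious obstacle.
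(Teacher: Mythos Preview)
The paper does not supply its own proof of this statement: Theorem~\ref{t:DJ} is quoted from \cite{Dopico-Johnson} and used as a black box. Your argument is therefore not competing with any proof in the paper, and it is a correct self-contained verification of the Dopico--Johnson parametrization.

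One small inaccuracy worth fixing: you assert that the $(2,2)$ block equation of $N\Omega N=\Omega$ ``follows from the $(1,1)$ one by transposition.'' It does not---the $(2,2)$ equation reads $N_{12}^\top N_{22}=N_{22}N_{12}$, which involves $N_{22}$ rather than $N_{11}$ and is in fact its own transpose. This does not damage your proof, however: you use only the $(1,1)$ and $(1,2)$ block equations to pin down $N_{12}=N_{11}W$ and $N_{22}=N_{11}^{-1}+WN_{11}W$, and since $N$ is \emph{assumed} symplectic, the remaining two block equations are hypotheses you need not invoke (alternatively, your forward direction already shows every matrix of this form is symplectic, hence satisfies all four). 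So the logic stands; only the throwaway justification for redundancy of the $(2,2)$ equation should be rephrased or simply dropped.
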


\begin{prop}
\label{sympd}
Let 
$ 
N=
\renewcommand{\arraystretch}{1.1}
\mtx
{N_{11} & N_{12}\\ 
N_{12}\trans & N_{22} }$
be a positive definite matrix, 
where $N_{11}$ and $N_{22}$ are $p\times p$. 
Then $N$ is a sympPD matrix if and only if
\beq\label{eq:sympPDinv} N^{-1}= 
\left[ \begin{array}{rr} N_{22}& -N_{12}\trans \\ 
-N_{12} & N_{11} \end{array} \right]. \eeq
In this case, 
$\G(N^{-1})\cong \G(N)$.

\end{prop}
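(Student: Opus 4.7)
The plan is to reduce everything to the identity $N^{-1}=-\Omega N \Omega$, which will both yield the block formula for $N^{-1}$ and make the isomorphism of graphs essentially transparent. The key starting observation is that since $N$ is symmetric, the symplectic condition $N^\top \Omega N=\Omega$ becomes $N\Omega N=\Omega$. This is the same rearrangement already used in Remark~\ref{r:sympPDequiv}: multiplying $N\Omega N=\Omega$ on the right by $N^{-1}$ and on the left by $\Omega^{-1}=-\Omega$ gives $N^{-1}=-\Omega N\Omega$. Conversely, if $N^{-1}=-\Omega N\Omega$, then right-multiplying by $N$ gives $I=-\Omega N\Omega N$, so $N\Omega N=-\Omega^{-1}=\Omega$; combined with $N^\top=N$, this is exactly the symplectic condition. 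Hence, given that $N$ is positive definite, $N$ is sympPD if and only if $N^{-1}=-\Omega N\Omega$.

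Next I would do the block computation of $-\Omega N\Omega$. Writing $\Omega$ in block form, one first computes
\[
\Omega N=\mtx{N_{12}^\top & N_{22}\\ -N_{11} & -N_{12}},
\]
and then right-multiplying by $\Omega$ and negating gives exactly
\[
-\Omega N\Omega=\mtx{N_{22} & -N_{12}^\top \\ -N_{12} & N_{11}}.
\]
Combining this with the equivalence above establishes \eqref{eq:sympPDinv}.

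For the graph statement, I would exhibit an explicit isomorphism via the ``swap'' permutation $\sigma$ on $\{1,\dots,2p\}$ that exchanges $i$ with $p+i$ for each $i\in\{1,\dots,p\}$. With $P_\sigma=\mtx{O & I\\ I & O}$, a direct block computation shows
\[
P_\sigma^\top N^{-1} P_\sigma=\mtx{N_{11} & -N_{12}\\ -N_{12}^\top & N_{22}},
\]
which has exactly the same off-diagonal zero/nonzero pattern as $N$ (the signs being irrelevant for $\mathcal{G}$). Since permutational similarity corresponds to relabeling vertices, this displays $\sigma$ as the required graph isomorphism $\mathcal{G}(N^{-1})\cong \mathcal{G}(N)$.

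There is no serious obstacle here; the argument is essentially two short block multiplications bracketed by the standard manipulation $\Omega^{-1}=-\Omega$. The only mild care point is the graph-isomorphism clause: one should be explicit that $\mathcal{G}(N)$ and $\mathcal{G}(N^{-1})$ need not be equal as labeled graphs, and it is only after relabeling by $\sigma$ that the sparsity patterns coincide.
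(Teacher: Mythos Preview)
Your proof is correct and follows essentially the same approach as the paper: both reduce the equivalence to $N^{-1}=-\Omega N\Omega$ (the paper via $(\Omega N)^2=-I$ from Remark~\ref{r:sympPDequiv}, you via the equivalent $N\Omega N=\Omega$), and both obtain the graph isomorphism by factoring $\Omega$ as a sign matrix times the swap permutation $i\leftrightarrow i+p$. Your presentation is slightly more explicit in writing out the block computation and the conjugation by $P_\sigma$, but the argument is the same.
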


\bpf As noted in Remark \ref{r:sympPDequiv},
 $(\Omega N)^2=-I$ is equivalent to $N$ being a sympPD matrix.  If $(\Omega N)^2=-I$, then $N^{-1}= -\Omega N\Omega$, which implies the  form in \eqref{eq:sympPDinv}.  If  $N^{-1}$ has the form in \eqref{eq:sympPDinv},then $N^{-1}=-\Omega N \Omega$, and post-multiplying 
by $N$ gives $I=-(\Omega N)^2$.   

\ms  Now assume $N$ is a sympPD matrix. Then $N^{-1}=\Omega N \Omega\trans$.  Since  $\Omega=EP_\tau$ where $E=I\oplus -I$ 
 and $\tau=(1\ 1\!+\!p)(2\ 2\!+\!p)\cdots(p\ 2p)$, we see that
$\G(N^{-1})\cong \G(N)$. 
\epf

The complete graph allows a sympPD matrix {(see Example \ref{random})}, as do many other dense graphs.  It is natural to ask how   sparse  a connected labeled graph $G^L$ can be if it allows a sympPD matrix  $N\in \symp(G^L)$.  In the next section, we construct a family of {connected} labeled graphs of order $n=2p$ with $3p-2$ edges, and in Corollary \ref{c:sympPD-sparseLB}, we show this is the best possible.


\subsection{Construction of sparse symplectic positive definite matrices} \label{ss:TPn}

We can use basic symplectic matrices and   Proposition \ref{sympd} to construct 
many sympPD matrices, including sparse  irreducible sympPD matrices.

\begin{prop}\label{p:IBOBsympPD}
Let $B=[b_{ij}]$ be a symmetric $p\times p$ matrix,  $S=\mtx{I & B\\O & I}$, and  $N=S\trans S$.
 Then  
$N=\mtx{ I & B \\
      B & I + B^2 }
   $
is a sympPD matrix. 

Now assume that $B$ is entrywise nonnegative. Then in 
$\G^L(N)$,  
 for $1\le i,j\le p$, there is an edge joining $i$ and $j+p$ if and only if $b_{ij}\neq 0$,
and an edge joining $i+p$ and $j+p$ if and only if there is a $k$ such that both $b_{ik}$ and $b_{jk}$ 
are nonzero. 
\end{prop}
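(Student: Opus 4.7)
The plan is to verify that $N=S\trans S$ is symplectic positive definite by leveraging the machinery already developed, and then to read off the edge structure of $\G^L(N)$ from a direct block computation.

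First, I would observe that since $B$ is symmetric, the matrix $S=\mtx{I & B\\ O & I}$ is exactly a basic symplectic matrix of the third type in \eqref{basic}. Hence $S$ is symplectic, and being block upper triangular with identity diagonal blocks, it is invertible. Therefore $N=S\trans S$ is positive definite. Combining this with Remark \ref{r:sympPDequiv}, which says that a positive definite matrix of the form $S\trans S$ with $S$ symplectic is sympPD, we conclude that $N$ is a sympPD matrix.

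Next, I would carry out the block multiplication
\[
S\trans S = \mtx{I & O\\ B\trans & I}\mtx{I & B\\ O & I} = \mtx{I & B\\ B & I+B^2},
\]
using $B\trans=B$. This confirms the stated closed form of $N$.

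For the claim about $\G^L(N)$ when $B$ is entrywise nonnegative, I would examine the three distinct blocks of $N$ separately. The $(1,1)$ block is $I$, so no edges arise within $\{1,\dots,p\}$ (and this part is independent of the hypothesis on $B$). The $(1,2)$ block is exactly $B$, so the $(i,j\!+\!p)$-entry of $N$ is $b_{ij}$ and the corresponding edge exists if and only if $b_{ij}\ne 0$, which again requires no sign hypothesis. For the $(2,2)$ block $I+B^2$, the off-diagonal $(i,j)$-entry with $i\ne j$ is
\[
(B^2)_{ij}=\sum_{k=1}^{p}b_{ik}b_{kj}=\sum_{k=1}^{p}b_{ik}b_{jk},
\]
using symmetry of $B$. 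The key step, and the only place where the nonnegativity hypothesis is used, is that when each summand $b_{ik}b_{jk}\ge 0$ there is no cancellation, so $(B^2)_{ij}\ne 0$ if and only if some individual product $b_{ik}b_{jk}$ is nonzero, i.e.\ there exists $k$ with both $b_{ik}\ne 0$ and $b_{jk}\ne 0$.

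There is no real obstacle here: the argument is essentially a direct computation together with an appeal to Remark \ref{r:sympPDequiv}. The one point worth emphasizing is the role of the nonnegativity assumption. Without it, signed cancellations in $\sum_k b_{ik}b_{jk}$ could suppress an edge that would otherwise appear, so the ``common neighbor'' characterization of edges among $\{p+1,\dots,2p\}$ would in general only be sufficient, not necessary, for the existence of the edge $(i\!+\!p)(j\!+\!p)$.
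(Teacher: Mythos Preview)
Your proof is correct and follows essentially the same approach as the paper's own proof: verify that $S$ is symplectic (as a basic symplectic matrix), conclude $N=S\trans S$ is sympPD via Remark \ref{r:sympPDequiv}, and then read off the edge structure block by block, using nonnegativity of $B$ to rule out cancellation in the off-diagonal entries of $B^2$. Your write-up is in fact more explicit than the paper's terse version, and your closing remark on why the nonnegativity hypothesis is needed is a nice addition.
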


\begin{proof}
{We know $N=\mtx{ I & B \\
      B & I + B^2 }
   $ is a sympPD matrix because symplectic matrices are closed under products.}
 {The description of the edges of 
$\G^L(N)$} follows by noting that if $B$ is nonnegative, then the $(i,j)$-entry of 
$B^2$ is nonzero if and only if there is a $k$ such that both $b_{ik}$ and $b_{jk}$ are nonzero. 
\end{proof} 

An interesting example uses $B$ to be the $p\times p$ matrix  $B_p$ obtained from the adjacency matrix of the path on 
$p$ vertices by changing the $(1,1)$-entry to  a $1$.  The labeled graph of $N$ 
is connected, has {order}  $n=2p$  and $3p-2=(3/2)n-2$ edges.  
E.g., for $p=5$
\[ 
B_{5}= 
\mtx{
1 & 1 & 0 & 0 & 0\\ 
1& 0 & 1 & 0 &0 \\
        0 & 1 & 0 & 1 & 0 \\ 0 & 0 & 1 & 0 & 1\\
        0 & 0 & 0 & 1 & 0}\mbox{ and }
B_{5}^2=
\mtx{
2 & 1 & 1 & 0 & 0\\ 
1& 2 & 0 & 1 &0 \\
1 & 0 & 2 & 0 & 1 \\ 
     0 & 1 & 0 & 2 & 0\\
        0 & 0 & 1 & 0 & 1}.
\]
The labeled graph of $N$ for arbitrary $p$ is described in the  {next definition}. 
\begin{defn}\label{d:TPn}
The \emph{triangular path} $\tp_n$ on $n=2p\ge 4$ vertices is the  graph constructed as follows: Take $p-1$ copies of $K_3$, called $G_i,i=1,\dots,p-1$ and let $\hat G_1=G_1$.
For $i=1,\dots,p-2$, create $\hat G_{i+1}$ by identifying a degree-2 vertex of $\hat G_i$ in  $G_i$   with a vertex of $G_{i+1}$. This results in $\hat G_{p-1}$, which has order $3(p-1)-(p-2)=2p-1$.  Then create $\tp_n$ from $\hat G_{p-1}$ appending a leaf to a degree-2 vertex of $\hat G_{p-1}$ in  $G_1$.  The \emph{standard labeling of $\tp_n$} is shown in   Figures~\ref{fig:tripathodd} and \ref{fig:tripatheven}  and is denoted by $\tp_n^I$.
\end{defn}

The graph $\tp_n$ can be thought of as a path of $p-1$ triangles with a leaf added.  Observe that the next two figures show  the same graph with the same standard labeling, but as indicated there,  which of $p+1$ and $p+2$  is closer to the leaf depends on whether  $p$ is even or odd.     It is shown in Corollary  \ref{tp_is_sympd}  that   $\GL L (N)=\tp_n^I$ for $N=\mtx{I & B_p\\B_p& I+B_p^2}$.
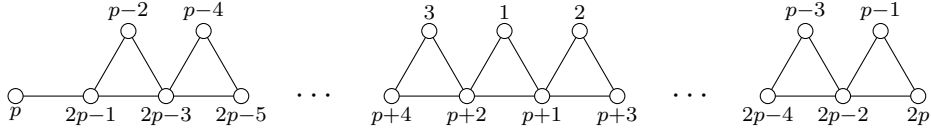
\begin{figure}[H]
\centering
\begin{tikzpicture}
\draw(0,0)--(3,0)--(2.5,{sqrt(3)/2})--(2,0)--(1.5,{sqrt(3)/2})--(1,0);
\node[below]at(0,0){$\substack{p}$};\node[below]at(1,0){$\substack{2p-1}$};\node[below]at(2,0){$\substack{2p-3}$};\node[below]at(3,0){$\substack{2p-5}$};
\node[above]at(1.5,{sqrt(3)/2}){$\substack{p-2}$};\node[above]at(2.5,{sqrt(3)/2}){$\substack{p-4}$};
\node at(4,0){$\dotsb$};
\draw(5,0)--(8,0)--(7.5,{sqrt(3)/2)})--(7,0)--(6.5,{sqrt(3)/2})--(6,0)--(5.5,{sqrt(3)/2})--(5,0);
\node[below]at(5,0){$\substack{p+4}$};\node[below]at(6,0){$\substack{p+2}$};\node[below]at(7,0){$\substack{p+1}$};\node[below]at(8,0){$\substack{p+3}$};
\node[above]at(5.5,{sqrt(3)/2}){$\substack{3}$};\node[above]at(6.5,{sqrt(3)/2}){$\substack{1}$};\node[above]at(7.5,{sqrt(3)/2}){$\substack{2}$};
\node at(9,0){$\dotsb$};
\draw(10,0)--(12,0)--(11.5,{sqrt(3)/2)})--(11,0)--(10.5,{sqrt(3)/2)})--(10,0);
\node[below]at(10,0){$\substack{2p-4}$};\node[below]at(11,0){$\substack{2p-2}$};\node[below]at(12,0){$\substack{2p}$};
\node[above]at(10.5,{sqrt(3)/2}){$\substack{p-3}$};\node[above]at(11.5,{sqrt(3)/2}){$\substack{p-1}$};
\foreach\i in{0,1,2,3,5,6,7,8,10,11,12}{\draw[fill=white](\i,0)circle(0.1);}
\foreach\i in{1.5,2.5,5.5,6.5,7.5,10.5,11.5}{\draw[fill=white](\i,{sqrt(3)/2})circle(0.1);}
\end{tikzpicture}
\caption{Standard labeled triangular path $\tp_{2p}^I$   when $p$ is odd.}
\label{fig:tripathodd}
\end{figure}

\begin{figure}[H]
\centering
\begin{tikzpicture}
\draw(0,0)--(3,0)--(2.5,{sqrt(3)/2})--(2,0)--(1.5,{sqrt(3)/2})--(1,0);
\node[below]at(0,0){$\substack{p}$};\node[below]at(1,0){$\substack{2p-1}$};\node[below]at(2,0){$\substack{2p-3}$};\node[below]at(3,0){$\substack{2p-5}$};
\node[above]at(1.5,{sqrt(3)/2}){$\substack{p-2}$};\node[above]at(2.5,{sqrt(3)/2}){$\substack{p-4}$};
\node at(4,0){$\dotsb$};
\draw(5,0)--(8,0)--(7.5,{sqrt(3)/2)})--(7,0)--(6.5,{sqrt(3)/2})--(6,0)--(5.5,{sqrt(3)/2})--(5,0);
\node[below]at(5,0){$\substack{p+3}$};\node[below]at(6,0){$\substack{p+1}$};\node[below]at(7,0){$\substack{p+2}$};\node[below]at(8,0){$\substack{p+4}$};
\node[above]at(5.5,{sqrt(3)/2}){$\substack{2}$};\node[above]at(6.5,{sqrt(3)/2}){$\substack1$};\node[above]at(7.5,{sqrt(3)/2}){$\substack{3}$};
\node at(9,0){$\dotsb$};
\draw(10,0)--(12,0)--(11.5,{sqrt(3)/2)})--(11,0)--(10.5,{sqrt(3)/2)})--(10,0);
\node[below]at(10,0){$\substack{2p-4}$};\node[below]at(11,0){$\substack{2p-2}$};\node[below]at(12,0){$\substack{2p}$};
\node[above]at(10.5,{sqrt(3)/2}){$\substack{p-3}$};\node[above]at(11.5,{sqrt(3)/2}){$\substack{p-1}$};
\foreach\i in{0,1,2,3,5,6,7,8,10,11,12}{\draw[fill=white](\i,0)circle(0.1);}
\foreach\i in{1.5,2.5,5.5,6.5,7.5,10.5,11.5}{\draw[fill=white](\i,{sqrt(3)/2})circle(0.1);}
\end{tikzpicture}
\caption{Standard labeled triangular path $\tp_{2p}^I$  when $p$ is even.}
\label{fig:tripatheven}
\end{figure}

\begin{cor}
\label{tp_is_sympd}
For each integer $p\geq 2$, there exists a sympPD matrix in $ \symp(\tp_n^I)$.
\end{cor}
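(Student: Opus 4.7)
The plan is to apply Proposition \ref{p:IBOBsympPD} with $B := B_p$ (the symmetric matrix defined in the paragraph preceding Definition \ref{d:TPn}) and then verify that the resulting matrix has labeled graph exactly $\tp_n^I$. Setting $S := \begin{bmatrix} I & B_p \\ O & I \end{bmatrix}$ and $N := S^\top S = \begin{bmatrix} I & B_p \\ B_p & I + B_p^2 \end{bmatrix}$, the proposition immediately yields that $N$ is a sympPD matrix, so all that remains is combinatorial.

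Since $B_p$ is entrywise nonnegative, the second part of Proposition \ref{p:IBOBsympPD} describes the edges of $\G^L(N)$. Reading from the loop at vertex $1$ and the $p-1$ path edges of $B_p$, the cross edges are $\{1, p+1\}$ together with $\{k, p+k+1\}$ and $\{k+1, p+k\}$ for $k = 1, \ldots, p-1$. For the top edges, two columns $i$ and $j$ of $B_p$ share a common nonzero row iff $\{i,j\} = \{1,2\}$ (via $\ell = 1$) or $\{i,j\} = \{k, k+2\}$ for some $k \in \{1, \ldots, p-2\}$ (via $\ell = k+1$). Hence the top edges are $\{p+1, p+2\}$ together with $\{p+k, p+k+2\}$ for $k = 1, \ldots, p-2$, giving $|E(\G^L(N))| = (2p-1) + (p-1) = 3p-2 = |E(\tp_n)|$.

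Grouping these edges shows that the triangles of $\G^L(N)$ are $T_0 := \{1, p+1, p+2\}$ and $T_m := \{m+1, p+m, p+m+2\}$ for $m = 1, \ldots, p-2$. A direct check shows that $T_m$ and $T_{m'}$ share a vertex precisely when $|m-m'| \in \{1,2\}$, so these $p-1$ triangles form a single chain in which $T_0$ branches into the odd-indexed $T_m$'s on one side and the even-indexed $T_m$'s on the other. The vertex $p$ has the unique neighbor $2p-1$, hence is a leaf; one checks that $2p - 1 \in T_{p-3}$ when $p \geq 4$ (and $\in T_0$ in the degenerate cases $p = 2, 3$), which in each case is an endpoint of the chain, exactly as prescribed by Definition \ref{d:TPn}.

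The main (and only) obstacle is matching the above chain-of-triangles decomposition with the specific vertex labels depicted in Figures \ref{fig:tripathodd} and \ref{fig:tripatheven}. This requires a short case split on the parity of $p$, since the leaf's triangle $T_{p-3}$ lies on the chain of odd-indexed $T_m$'s when $p$ is even and on the chain of even-indexed $T_m$'s when $p$ is odd. The matching itself is pure bookkeeping; nothing deeper about symplectic structure is needed beyond Proposition \ref{p:IBOBsympPD}.
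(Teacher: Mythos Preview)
Your approach is essentially the paper's: take $N=\mtx{I & B_p\\ B_p & I+B_p^2}$, observe it is sympPD via Proposition \ref{p:IBOBsympPD}, list the edges of $\G^L(N)$, and conclude $\G^L(N)=\tp_{2p}^I$. The paper simply records the same edge list and asserts the match with the figures; your extra triangle analysis is optional elaboration.

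That elaboration contains a slip, however. Your claim that $T_m$ and $T_{m'}$ share a vertex precisely when $|m-m'|\in\{1,2\}$ is false: for $m,m'\ge 1$ one has $T_m\cap T_{m'}\ne\emptyset$ iff $|m-m'|=2$ (e.g., $T_1=\{2,p+1,p+3\}$ and $T_2=\{3,p+2,p+4\}$ are disjoint), while $T_0$ meets exactly $T_1$ and $T_2$. This corrected rule is what actually yields your stated chain $\cdots-T_3-T_1-T_0-T_2-T_4-\cdots$; the rule you wrote would make the triangle-intersection graph a squared path rather than a path, contradicting your own conclusion. It is a bookkeeping error rather than a structural one: the edge list you computed is right, and once the intersection pattern is fixed your chain-of-triangles verification goes through.
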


\bpf
Let $S=\begin{bmatrix}
    I & B_p\\
    O &I 
\end{bmatrix}$.
Then $N=S\trans S$ is a sympPD matrix.
It 
suffices to show that $\GL L(N)=\tp_{2p}^I$.

Observe that $N=\mtx{I & B_p\\B_p & I+B_p{^2} 
}$.  
Since $I$ is the leading $p\times p$ 
principal submatrix of $N$,
the set $\{1,\dots,p\}$ is an independent
set of vertices in $\G^L(N)$.  Edges in $\G^L(N)$  between $\{1,\dots,p\}$ and $\{p+1,\dots,2p\}$ are described by the nonzero entries of $B_p$. That is, the neighborhoods of vertices $1, \ldots, p$ are:  $N(1)=\{p+1,p+2\}$, $N(p)=\{2p-1\}$ and $N(i)=\{p+i-1,p+i+1\}$ for $i=2,\dots,p-1$.
Edges between vertices in $\{ p+1, \ldots, 2p\}$ are determined by the
dot products between columns of $B$. 
Thus, the edges in $G^L$ joining
pairs of vertices in $\{p+1, \ldots, 2p\}$ are $\{p+i, p+i+2\}$ for $i=1,\ldots, p-2$  and $\{p+1,p+2\}$. Hence $ \G^L(N)=\tp_{2p}^I$.
\epf

\subsection{Sparsity of inverse pairs of irreducible symmetric positive definite matrices}\label{ss:PDirred-sparse}

 The number of nonzero entries in a matrix $A$
is denoted by $\#(A)$.
 In this section we establish a lower bound on $\#(M)+\#(M^{-1})$ for an irreducible positive definite matrix $M$.  
Proposition \ref{sympd} then allows us to translate this to a lower bound for $\#(N)$ when $N$ is both symplectic and  positive definite, and triangular paths show this bound is sharp (see Corollary \ref{c:sympPD-sparseLB}). 

A symmetric  matrix $A$    is irreducible if and only if $\G(A)$ is connected.
 In the proof of the next theorem we use unlabeled graphs, although the graph of a matrix is technically a labeled graph; we suppress the labels since the labeling is irrelevant when discussing irreducible matrices because the class of irreducible matrices is invariant under permutations. 

 Note that if $M$ is an  irreducible symmetric invertible matrix, then $M^{-1}$ is also irreducible, so the graphs of $M$ and the graph of $M^{-1}$ are connected.

\begin{thm}\label{t:PDirred-sparseLB}
Let $M$ be an $n\times n$ irreducible, 
positive definite matrix. 
Then 
\[\#(M)+ \#(M^{-1}) \geq 8n-8.\]
\end{thm}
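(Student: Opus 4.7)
The plan is to recast the inequality in graph-theoretic terms and prove it by induction on $n$. Because $M$ and $M^{-1}$ are positive definite (hence have strictly positive diagonals), writing $G=\G(M)$ and $H=\G(M^{-1})$ we have $\#(M)=n+2|E(G)|$ and $\#(M^{-1})=n+2|E(H)|$, so the desired bound is equivalent to
\[
|E(G)|+|E(H)| \ge 3n-4.
\]
Both $G$ and $H$ are connected on $n$ vertices (since $M$ irreducible implies $M^{-1}$ irreducible, because permuting $M^{-1}$ into block-upper-triangular form would by symmetry force the same on $M$). The base cases $n=1,2$ are immediate.

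The core of the argument is a leaf-reduction via Sherman--Morrison. Suppose $G$ has a leaf $v$ with unique neighbor $u$, and write $M=\bigl[\begin{smallmatrix}A & \alpha e_u \\ \alpha e_u^\top & c\end{smallmatrix}\bigr]$. With Schur complement $s=c-\alpha^2(A^{-1})_{uu}>0$, the top-left $(n-1)\times(n-1)$ block of $M^{-1}$ is $A' = A^{-1}+(\alpha^2/s)(A^{-1}e_u)(A^{-1}e_u)^\top$. The crucial identity, by Sherman--Morrison, is $(A')^{-1}=A-(\alpha^2/c)\,e_u e_u^\top$, which differs from $A$ only in the $(u,u)$-entry (still positive, since $(A')^{-1}$ is PD). Therefore $(A')^{-1}$ is an irreducible positive definite matrix with exactly the same graph $G-v$ as $A$ (noting $G-v$ is connected, since removing a leaf preserves connectedness), so $\#((A')^{-1})=\#(A)$. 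Applying the inductive hypothesis to $(A')^{-1}$ of order $n-1$ yields $\#(A)+\#(A')\ge 8n-16$. Direct counting then shows $\#(M)=\#(A)+3$ and $\#(M^{-1})=\#(A')+2k_u+1$, where $k_u$ denotes the number of nonzero entries in the $u$-th column of $A^{-1}$ (equivalently, $\deg_H(v)$), giving
\[
\#(M)+\#(M^{-1}) \ge (8n-16)+2k_u+4 = 8n-12+2k_u.
\]
The induction therefore closes provided $k_u\ge 2$. I would establish this as a preliminary local lemma: if $k_u=1$, then the $u$-th column of $A^{-1}$ is $(A^{-1})_{uu}e_u$, so $e_u$ is an eigenvector of $A^{-1}$ and hence of $A$, forcing row $u$ of $A$ to have only its diagonal entry nonzero, contradicting the irreducibility of $A$.

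If $G$ has no leaf but $H$ does, the same argument applies with $M$ and $M^{-1}$ swapped. The main obstacle is the remaining case $\delta(G),\delta(H)\ge 2$: then $|E(G)|,|E(H)|\ge n$, giving $|E(G)|+|E(H)|\ge 2n$, which suffices for $n\le 4$ but leaves a deficit of up to $n-4$ for larger $n$. Here I would attempt a degree-$2$ reduction: take $v\in V(G)$ with $\deg_G(v)=2$ and neighbors $u_1,u_2$, and apply Sherman--Morrison to obtain $(A')^{-1}=A-(1/c)bb^\top$ for $b=\alpha_1 e_{u_1}+\alpha_2 e_{u_2}$, so that $G((A')^{-1})$ differs from $G-v$ at most in the edge $u_1u_2$. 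The analysis splits on whether $\{v,u_1,u_2\}$ spans a triangle in $G$: the triangle case closes with only the trivial bound $k_v\ge 1$, but the non-triangle subcase (and the nuisance that $v$ may be a cut vertex, making $(A')^{-1}$ reducible) requires showing a local bound $k_v\ge 2$ analogous to the leaf case and a careful handling of the reducible situation by splitting into irreducible blocks. This degree-$2$ bookkeeping is where most of the technical work will lie.
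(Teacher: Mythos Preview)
Your plan is the same induction the paper runs: reduce via a vertex of minimum degree using the Schur complement, handle the leaf case, then the degree-$2$ case. Your leaf reduction is complete and correct; your argument that $k_u\ge 2$ (namely, $k_u=1$ forces $e_u$ to be an eigenvector of $A$, contradicting irreducibility) is a clean variant of the paper's observation that $N_H[v]=N_H[u]$. One small omission: you never dispose of the case $\delta(G),\delta(H)\ge 3$, but that is trivial since then $|E(G)|+|E(H)|\ge 3n\ge 3n-4$.

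Where the outline goes wrong is the location of the hard subcase in the degree-$2$ reduction. In the \emph{non}-triangle case, $(A')^{-1}=A-(1/c)bb^\top$ necessarily has $(A')^{-1}_{u_1u_2}=-\alpha_1\alpha_2/c\ne 0$, so the edge $u_1u_2$ is \emph{added}; hence $\G((A')^{-1})=(G-v)+u_1u_2$ is always connected (any $G$-path through $v$ reroutes via the new edge), and the induction closes with your bound $k_v\ge 2$ coming for free from $\delta(H)\ge 2$. The reducibility nuisance actually lives in the \emph{triangle} case: there $(A')^{-1}_{u_1u_2}=A_{u_1u_2}-\alpha_1\alpha_2/c$ can vanish, and then $\G((A')^{-1})=(G-v)-u_1u_2$ may split into two components. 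When it does, applying induction to each block costs you an extra $8$, so you now need $k_v=\deg_H(v)\ge 4$, not $\ge 2$. The paper obtains exactly that bound by reading off, from the block-diagonal structure of $(A')^{-1}$ and its inverse $A'$, equations that force the $v$-column of $M^{-1}$ to be a nonzero scalar multiple of the $u_1$-column on one block and of the $u_2$-column on the other; since each $u_i$ has degree $\ge 2$ in $H$ within its own block, $v$ picks up at least two neighbors on each side. That structural step is the genuine content you are still missing.
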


\begin{proof}
The proof is by induction on $n$.
{For $n=1$, $\#(M)+ \#(M^{-1})= 1+1 > 0=8\cdot 1 -8$.} 
For $n=2$, the irreducibility of $M$ implies 
that $\#(M) + \#(M^{-1})=4 +4 =8\cdot 2 -8$.

Now assume that $n>2$ and proceed by induction.
Throughout {this proof} we use $N$ to denote $M^{-1}$.  Since $M$ is positive definite, every diagonal entry is nonzero, so the number of nonzero vertices in row $i$ of $M$ is $1+\deg_{\G(M)}(i)$, and similarly for $N$.
If each vertex of $\G(M)$ and $\G(N)$ has degree at least 3, then 
\[ 
\#(M)+ \#(N) \geq 4n+4n=8n> 8n-8,
\]
as desired. Thus, we may assume that $\G(M)$ or $\G(N)$ 
has a vertex of degree at most $2$. We consider several cases. 

\bigskip\noindent
{\bf Case 1.} $\G(M)$ or $\G(N)$
has a vertex of degree $1$.

Without loss of generality, we can assume that  vertex
$1$ is a pendant vertex adjacent to  vertex $2$ in $\G(M)$.
Thus $M$ has the form 
\[
\begin{bmatrix}
m_{11} & m_{12} & 0 &\cdots & 0\\
m_{12} & m_{22} \\
0\\
\vdots\\
0 
\end{bmatrix}.
\]
 By considering the first row of the product  $MN=I$, we see that {$N_{\G(N)}[1]=N_{\G(N)}[2]$}. Since $\G(M)$, and hence $\G(N)$,
is connected and $n\geq 3$, vertex $1$ of $\G(N)$ has degree at least $2$. 

Let 
\begin{align*}
P&= \begin{bmatrix}
    1 & -m_{12}/m_{11}\\
    0 & 1
\end{bmatrix} \oplus I_{n-2},\\
\widehat{M}&= P\trans MP, \mbox{ and }\\
\widehat{N}&= P^{-1} N (P^{-1})\trans .
\end{align*}
Then 
$\widehat{M}$ and $\widehat{N}$ are
 positive definite matrices that are inverses of each other.  Examination of $\widehat{M}=P\trans MP$ shows that the  matrix $\widehat{M}$ has the form 
$[m_{11}] \oplus B$  where {(with the rows and columns of $B$ indexed by $2,\dots,n$)  $b_{22}=m_{22}-\frac{m_{12}^2}{m_{11}}$} and $b_{ij}=(M(1))_{ij}$ for $i,j\ge 2$ and $(i,j)\ne (2,2)$. Hence $\#(B)= \#(M(1))$.  Examination of $\widehat{N}=P^{-1} N (P^{-1})\trans$ shows that $\widehat{N}=
[1/m_{11}] \oplus N(1)$. 

Since $\G(M)$ is connected and vertex $1$ is a pendant vertex, $\G(M(1))$ is connected.
Hence $B$ is irreducible symmetric positive definite of order $(n-1)$ with inverse $N(1)$.
By induction, $\#(B) + \#(N(1))\geq 8(n-1)-8=8n-16$, implying that $\#(M(1))+ \#(N(1)) \geq 8n-16$.
The number of nonzeros in the first row and column of $M$ is 3, the number of nonzeros in the first row and column of $N$ is at least 5.
Therefore $\#(M)+ \#(N)\geq 8n-16 + 3 +5= 8n-8$,
as desired.

\bigskip\noindent 
{\bf Case 2.} All vertices of $\G(M)$ and $\G(N)$ have degree at least 2 and at least one has degree 2. 

Without loss of generality, we may assume that the neighbors of vertex $1$
in $\G(M)$ are vertices $2$ and $3$.  Thus $M$ has the form 
\[
M= \begin{bmatrix}
    m_{11} & m_{12} & m_{13} & 0 & \cdots & 0 \\
    m_{12}\\
    m_{13}
    \\
    0 \\
    \vdots\\
    0
\end{bmatrix}.
\]
Let 
\begin{align*}
P&= \begin{bmatrix}
    1 & -m_{12}/m_{11} & -m_{13}/m_{11}\\
    0 & 1 & 0 \\
    0 & 0 & 1
\end{bmatrix} \oplus I_{n-3},\\
\widehat{M}&= P\trans M P, \mbox{ and }\\
\widehat{N}&= P^{-1} N(P^{-1})\trans.
\end{align*}
Then $\widehat{M}$ and $\widehat{N}$ are symmetric positive definite matrices that are inverses of each other. Examination of $\widehat{M}=P\trans MP$ shows that  $\widehat{M}= [m_{11}] \oplus B$  where $b_{22}=-\frac{m_{12}^2}{m_{11}} + m_{22}$, $b_{23}=-\frac{m_{12}m_{13}}{m_{11}} + m_{23}$, $b_{33}=-\frac{m_{13}^2}{m_{11}} + m_{33}$, and $b_{ij}=m_{ij}$ for all other entries of $B$.
 Thus $\G(B)$ is  $\G(M(1))$ or is obtained from $\G(M(1))$ 
by deleting  or inserting  the edge joining vertices $2$ and $3$. Examination of $\widehat{N}=P^{-1} N (P^{-1})\trans$ shows that $\widehat{N}=
[1/m_{11}] \oplus N(1)$.  Since $B$ and $N(1)$ are inverses of each other, either   $\G(B)$ and $\G(N(1))$ are both connected or $\G(B)$ and $\G(N(1))$ are both disconnected.   

\bigskip\noindent
{\bf Subcase:} $\G(N(1))$ is connected.
\\
In this case, both  $\G(B)$  and $\G(N(1))$ are connected. So $B$ and $N(1)$ 
are irreducible, positive definite symmetric matrices of order $n-1$ 
and are inverses of each other. By induction, 
\[ \#(B) + \#(N(1))\geq 8(n-1)-8=8n-16.\]
The first row and column of $N$ contain at least $5$ nonzeros since all vertices of $\G(N)$ have degree at least $2$.
So $\#(N)\geq \#(N(1))+5$.  The first row and column of $M$ contain 
at least $5$ nonzeros, and $\#(M(1))\geq \#(B)-2$.  So, 
$\#(M) \geq \#(B)+3$.  It now follows that 
\[ \#(M) +\#(N) \geq 8n-16 + 5 + 3=8n-8,\] as desired. 

\bigskip\noindent
{\bf Subcase:} $\G(N(1))$ is disconnected.
\\
Since $B$ and $N(1)$ are inverses of each other, $B$ is also disconnected.  
Since $\G(B)$ and $\G(M(1))$ have the same edges except for possibly the edge joining $2$ and $3$, $\mathcal{G}(N(1))$ has exactly two 
connected components, one containing $2$ and 
the other containing $3$. Denote the vertex sets of the two connected components of $N(1)$ by $ \alpha$ and $ \beta$ where $2 \in \alpha$
and $3 \in \beta$.
Then  $M$ has the form 
\[ 
M= 
\left[ \begin{array}{ccc|c|c}
m_{11} & m_{12} & m_{13} & \mathbf 0^{\top} & \mathbf{0}^{\top}\\
m_{12} & m_{22} & m_{23} & \mathbf g^{\top} & \mathbf h^{\top} \\
m_{13} & m_{23} & m_{33} & \mathbf i^{\top} & \mathbf j^{\top}  \\ \hline
\mathbf 0 & \mathbf g &\mathbf i &  M[\alpha\setminus\{2\} ] & O 
\\ \hline 
\mathbf 0 & \mathbf h & \mathbf j &  O & M[\beta\setminus \{3\}] 
\end{array} \right] .  
\] 
and $N$ has the form 
\[
N= 
\left[ 
\begin{array}{ccc|c|c} 
n_{11} & n_{12} & n_{13} & \mathbf u^{\top} & \mathbf v^{\top} \\
n_{12} & n_{22}  & 0 & \mathbf x^{\top} & \mathbf 0^{\top}\\
n_{13} & 0 & n_{33} &  \mathbf 0^{\top} & \mathbf y^{\top} \\ \hline 
\mathbf u & \mathbf x &  \mathbf 0 &  N[\alpha\setminus\{2\}] & O \\ \hline 
\mathbf v & \mathbf {0} & {\mathbf y} & O & N[\beta
\setminus\{3\} ]
\end{array} 
\right] .
\] 
Note that  
\[ 
P^{\top}MP =
\left[ \begin{array}{ccc|c|c}
m_{11} & 0 & 0 & \mathbf 0^{\top} & \mathbf{0}^{\top}\\
0 & m_{22}-m_{12}^2/m_{11} & m_{23}-m_{12}m_{13}/m_{11} & \mathbf g^{\top} & \mathbf h^{\top} \\
0 & m_{23}-m_{12}m_{13}/m_{11} & m_{33}-m_{13}^2/m_{11}  & \mathbf i^{\top} & \mathbf j^{\top}  \\ \hline
\mathbf 0 & \mathbf g &\mathbf i &  M[\alpha\setminus \{2\} ] & O 
\\ \hline 
\mathbf 0 & \mathbf h & \mathbf j &  O & M[\beta\setminus \{ 3 \}] 
\end{array} \right] ,  
\] 
and 
$ P^{-1}N (P^{\top})^{-1} $
equals 
\begin{center}
\resizebox{.95\linewidth}{!}{$\left[ 
\begin{array}{ccc|c|c} 
 1/m_{11} & n_{12} + n_{22}m_{12}/m_{11} & n_{13} + n_{33}m_{13}/m_{11} & \mathbf u^{\top}+ (m_{12}/m_{11})\mathbf x^{\top}  & \mathbf v^{\top} + (m_{13}/m_{11}) \mathbf y^{\top} \\
n_{12}+n_{22}m_{12}/m_{11}  & n_{22} & 0 & \mathbf x^{\top} & \mathbf 0^{\top}\\
n_{13} +n_{33}m_{13}/m_{11} & 0 & n_{33} &  \mathbf 0^{\top} & \mathbf y^{\top} \\ \hline 
\mathbf u+ (m_{12}/m_{11})\mathbf x & \mathbf  x &  \mathbf 0 &  N[\alpha \setminus \{ 2\} ] & O \\ \hline 
\mathbf v + (m_{13}/m_{11})
\mathbf y & \mathbf {0} & \mathbf {y} & O & N[\beta\setminus \{3\} ]
\end{array} 
\right]$}.
\end{center}
Since $P^{\top} MP[\alpha , \beta]=O$, $m_{23}=m_{12}m_{13}/m_{11}$.  
Thus $m_{23} \neq 0$, that is,  there is an edge joining vertex $2$ and vertex $3$ in $\G(M)$. 
Since $P^{\top}MP$ and $P^{-1}N(P^{\top})^{-1}$ are inverses of each other, each off-diagonal entry in the first row of 
$P^{-1} N(P^{\top})^{-1}$\emph  is zero. 
Thus, 
\begin{align*}
n_{12}& =-n_{22}m_{12}/m_{11}\\
n_{13}&=-n_{33}m_{13}/m_{11}\\
\mathbf u& = -(m_{12}/m_{11}) \mathbf x \\ 
\mathbf v&= -(m_{13}/m_{11}) \mathbf y. 
\end{align*}
In particular, this implies for $j \in \alpha \setminus \{2\} $,  vertex  $j$ is a neighbor of vertex $2$ in $\G(N)$ if 
and only if $j$ is a neighbor of $1$ in $\G(N)$.
Since each vertex of $\G(N)$ has degree at least $2$, there is some $j \in \alpha \setminus \{2\} $ that  is a neighbor of vertex $2$. 
Hence vertex $1$ has at least two neighbors in $\alpha$. Similarly, vertex $1$ has at least two neighbors in $\beta$, and the degree $d$ 
of vertex $1$ in 
$\G(N)$ is at least $4$.

Up to permutation similarity, 
$B= B[\alpha]  \oplus B[\beta ]$
and $N(1)= N[\alpha] \oplus N[\beta]$. So $B[\alpha]$ and $B[\beta]$ are irreducible positive definite matrices of order $|\alpha|$ and $|\beta|$ respectively, and their inverses are $N[\alpha]$ and $N[\beta]$ respectively. 
Hence, by induction $\#(B[\alpha]) + \#(N[\alpha])\geq 8|\alpha| - 8$, and 
$\#(B[\beta]) + \#(N[\beta])\geq 8|\beta|-8$.

Note that $N$ has $1+ 2 {d}$ nonzero entries in its first row and column,  $M$ has $5$ nonzero entries in its first row and column, and $\G(B)$ is $\G(M(1))$ with exactly one edge removed.  
So $\#(M(1))=
\#(B)+2$. Hence, $\#(M) =\#(B[\alpha]) + \#(B[\beta])+7$.
It follows that 
\begin{align*} 
\#(M) +\#(N)  
&\geq \#(B[\alpha]) + \#(B[\beta]) +7 + \#(N[\alpha]) + \#(N[\beta]) + (1+ 2d)
\\
&= \#(B[\alpha]) + \#(N[\alpha])+ \#(B[\beta]) + \#(N[\beta]) +8 + 2d \\
&\geq 8|\alpha|-8+ 8|\beta|-8 + 8+ 2d \\
&=8(n-1) + 2d -8 
\\
&\geq 8n-8.
\end{align*}
\qedhere 
\end{proof}

\begin{cor}\label{c:sympPD-sparseLB} Let $n=2p$. 
Let $N$ be an  $n \times n$ sympPD matrix.  If $N$ is irreducible, i.e., $\G(N)$ is connected, then 
\[
\#(N) \geq 4n-4
\]
and $\G(N)$ has at least $3p-2$ edges. These bounds are sharp. 
\end{cor}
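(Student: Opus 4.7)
The plan is to reduce this corollary to the just-proved Theorem~\ref{t:PDirred-sparseLB} by exploiting the structural symmetry between a sympPD matrix and its inverse given by Proposition~\ref{sympd}. First I would observe that by Proposition~\ref{sympd}, the graph of $N^{-1}$ is isomorphic to the graph of $N$; in particular, since both $N$ and $N^{-1}$ are symmetric with nonzero diagonal (positive definiteness forces nonzero diagonal, and the inverse of a positive definite matrix is positive definite), isomorphism of the graphs gives equality of the numbers of edges, and hence $\#(N^{-1}) = \#(N)$.

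Next, since $N$ is assumed irreducible and positive definite, Theorem~\ref{t:PDirred-sparseLB} applies with $M=N$, yielding
\[
2\,\#(N) \;=\; \#(N) + \#(N^{-1}) \;\geq\; 8n-8,
\]
so $\#(N) \geq 4n-4$. To translate this count of nonzero entries into a count of edges, note that $N$ is symmetric with $n$ nonzero diagonal entries, so $\#(N) = n + 2\,|E(\G(N))|$. The inequality $\#(N)\geq 4n-4$ therefore becomes
\[
|E(\G(N))| \;\geq\; \frac{3n-4}{2} \;=\; 3p-2,
\]
since $n=2p$.

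Finally, for sharpness I would invoke Corollary~\ref{tp_is_sympd}: the triangular path $\tp_n^I$ is a connected labeled graph on $n=2p$ vertices with exactly $3p-2$ edges (as counted from its construction in Definition~\ref{d:TPn}: $p-1$ triangles contribute $3(p-1)$ edges, minus nothing for the shared vertices, plus one edge for the appended leaf, giving $3p-2$), and $\symp(\tp_n^I)$ contains a sympPD matrix. So both bounds are attained. No step here is really the obstacle, as all the hard analytical work was already done in Theorem~\ref{t:PDirred-sparseLB} and in the explicit construction of Section~\ref{ss:TPn}; the only thing to take care of is matching the edge count of $\tp_n$ with the lower bound.
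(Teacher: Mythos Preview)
Your argument is correct and follows the same route as the paper: use Proposition~\ref{sympd} to conclude $\#(N^{-1})=\#(N)$, feed this into Theorem~\ref{t:PDirred-sparseLB} to get $2\#(N)\ge 8n-8$, convert to an edge count via $\#(N)=n+2|E(\G(N))|$, and cite Corollary~\ref{tp_is_sympd} for sharpness. The only small omission is that $\tp_n$ is defined only for $n=2p\ge 4$, so sharpness at $n=2$ (where $K_2$ with its single edge meets $3\cdot 1-2=1$) should be noted separately, as the paper does.
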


\begin{proof}
Assume $N$ irreducible.  Since $N$ is sympPD, $N^{-1}= \mtx{ N_{22}& -N_{12}\trans \\ 
-N_{12} & N_{11} }$ by Proposition \ref{sympd}.
Thus by Theorem \ref{t:PDirred-sparseLB},
$ 
2\#(N) = \#(N) + \#(N^{-1}) \geq 8(n-1),
$
and  the desired inequality follows. 

Since every diagonal entry of $N$ is positive and every edge is associated with two nonzero entries in the matrix, the number of edges in $\G(N)$ is $\frac 1 2(\#(N)-n)\ge 3p-2$ edges.
 Corollary \ref{tp_is_sympd} shows the bounds are sharp for $n\ge 4$.  A connected graph of order two has one edge, and $(3)(1)-2=1$.
\end{proof}

\subsection{Forbidden structures}\label{ss:forbid}

In this section  we seek conditions  that guarantee no matrix
with labeled graph $G^L$ is
sympPD.  First, we observe another property of sympPD matrices.

\begin{prop}\label{cor:bigraphblock}
If 
\[ 
\renewcommand{\arraystretch}{1.1}
N=\left[ \begin{array}{cc}
A & B \\  
B\trans & C
\end{array}
\right]
\]
is sympPD, then 
$B$ is similar to a symmetric matrix; 
and hence each eigenvalue of  $B$ is real
and its algebraic and  geometric multiplicity are equal.   
\end{prop}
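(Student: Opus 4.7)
The plan is to apply Theorem \ref{t:DJ} (the Dopico--Johnson characterization) directly. Since $N$ is a sympPD matrix, Theorem \ref{t:DJ} gives that there exist a positive definite $N_{11}$ and a symmetric $W$ such that
\[
N = \begin{bmatrix} N_{11} & N_{11}W \\ WN_{11} & N_{11}^{-1}+WN_{11}W \end{bmatrix}.
\]
Comparing the $(1,2)$ blocks, we get $A=N_{11}$ and $B = N_{11}W = AW$. The heart of the argument is then a standard symmetrization trick: since $A$ is positive definite it has a (unique) positive definite symmetric square root $A^{1/2}$, and I would write
\[
B \;=\; AW \;=\; A^{1/2}\bigl(A^{1/2}\, W\, A^{1/2}\bigr)A^{-1/2}.
\]
The inner factor $A^{1/2} W A^{1/2}$ is symmetric because both $A^{1/2}$ and $W$ are symmetric, so this exhibits $B$ as similar (via $A^{1/2}$) to a real symmetric matrix.

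From this similarity everything follows by standard linear algebra: a real symmetric matrix has only real eigenvalues and is diagonalizable, and similarity preserves both the spectrum and the Jordan structure, so the algebraic and geometric multiplicities of each eigenvalue of $B$ coincide.

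There is no real obstacle here; the only thing to be careful about is that Theorem \ref{t:DJ} parametrizes \emph{all} sympPD matrices, so the hypothesis that $N$ is sympPD is used in full to extract the factorization $B=AW$ with $W$ symmetric. Once that is in hand, the symmetric square root of $A$ does the rest of the work.
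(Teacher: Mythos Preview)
Your proof is correct and follows essentially the same approach as the paper: both invoke the Dopico--Johnson parametrization to write $B=AW$ with $A$ positive definite and $W$ symmetric, then conjugate by $A^{1/2}$ to exhibit $B$ as similar to the symmetric matrix $A^{1/2}WA^{1/2}$.
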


\bpf
Assume $N$ is {a sympPD matrix}. 
By Proposition \ref{sympd},  $A$ is positive definite, and there is a symmetric matrix $W$ with $B=AW$.  Since $A$ is positive definite, $A$ has a positive definite square root $\sqrt{A}$.  Note that 
$\sqrt{A}^{-1}B\sqrt{A}= \sqrt{A}W\sqrt{A}$, 
so $B$ is similar to the symmetric matrix $\sqrt{A}W\sqrt{A}$. The remainder of the proof follows from properties of similarity and real symmetric matrices. 
\epf

\begin{rem}\label{rem:NOallequal}
Proposition \ref{cor:bigraphblock} can be applied to exhibit examples of labeled graphs that do not allow all symplectic eigenvalues to be equal, or equivalently do not allow a sympPD matrix.
\begin{enumerate}
\item \label{NOallequal1}
If  a strongly connected component of the digraph $\Gamma^L(B)$ is a directed cycle  of length $\ell \geq 3$,
then each matrix with pattern $B$ has 
at least two non-real eigenvalues.  Hence, no $N$ of 
the form in Proposition \ref{cor:bigraphblock}
has all symplectic eigenvalues equal.
\item \label{NOallequal2}
If  $B$ has two irreducible components 
each equal to $[0]$ and there is a directed walk in $\Gamma^L(B)$ from one to the other, then every matrix with pattern $B$ has 0 as an eigenvalue with different algebraic and geometric multiplicities.  Hence no   $N$ of 
the form in Proposition \ref{cor:bigraphblock} with this $B$  has all symplectic eigenvalues equal. 
\end{enumerate}
\end{rem}

These examples suggest considering the maximum number of entries in a labeled graph that does not allow all eigenvalues to be equal.  The next example exhibits such a labeled graph of order $n=2p$ 
that omits $2p-1$ edges, i.e., that has $\binom{n}{2}-(n-1)=(p-1)(2p-1)$ edges.

\begin{ex}
\rm 
    Let $n=2p$ and let $G^L$ be the labeled graph with vertices $\{1,\dots,2p\}$ and edge set that is the complement of $\OL E=\{\{i,p+1\}\colon i=1,\dots,p\}\,\cup\,
    {\{\{i,p+2\}\colon i=2,\dots,p\}}$.  Let $N$ be a positive definite matrix whose labeled graph is $G^L$ and let $B=N[\{1,\dots,p\}, \{p+1,\dots,2p\}]$.  Then $B$ has two irreducible components 
each equal to $[0]$  and in the digraph there is an arc from one to the other.  Thus by Remark \ref{rem:NOallequal}\eqref{NOallequal2}, $N$ does not allow all symplectic eigenvalues to be equal.
\end{ex}

\begin{prop}\label{iso-isol}
    Let $G^L$ be a labeled graph that has an isolated vertex labeled $i$ and let $N\in\symp(G^L)$.  If the vertex labeled $i\pm p$ is not isolated, then $N$ has at least two distinct symplectic eigenvalues. 
\end{prop}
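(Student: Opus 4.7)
The plan is to argue by contradiction: suppose $N$ has a single symplectic eigenvalue $\lambda>0$. Scaling by $1/\lambda$ does not change the labeled graph, and the symplectic eigenvalues of $N/\lambda$ are all equal to $1$, so $N/\lambda$ is sympPD. Hence we may assume without loss of generality that $N$ itself is sympPD. By the obvious symmetry between the two block-index sets, we may further assume $i\le p$, so that the partner vertex in question is $i+p$.

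The key step is to combine two pieces of information. On one hand, vertex $i$ is isolated in $\mathcal{G}(N)$, so writing $N=\begin{bmatrix}N_{11}&N_{12}\\N_{12}\trans&N_{22}\end{bmatrix}$, row $i$ of $N_{11}$ equals $n_{ii}e_i\trans$ and row $i$ of $N_{12}$ is zero. Thus $N$ is permutation-similar to $[n_{ii}]\oplus N(\{i\})$, which forces row $i$ of $N^{-1}$ to be $(1/n_{ii})e_i\trans$. On the other hand, since $N$ is sympPD, Proposition \ref{sympd} gives the explicit form
\[
N^{-1}=\begin{bmatrix}N_{22}&-N_{12}\trans\\-N_{12}&N_{11}\end{bmatrix}.
\]
Reading off row $i$ of this expression and equating to $(1/n_{ii})e_i\trans$ yields
\[
N_{22}[i,:]=(1/n_{ii})e_i\trans\qquad\text{and}\qquad -N_{12}\trans[i,:]=\mathbf{0},
\]
the second of which says that column $i$ of $N_{12}$ is zero.

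These two conclusions are exactly what is needed to show vertex $i+p$ is isolated in $\mathcal{G}(N)$: row $i+p$ of $N$ equals $[N_{12}\trans[i,:]\ \ N_{22}[i,:]]$, whose entries are all zero except for the diagonal entry $1/n_{ii}$ in column $i+p$. This contradicts the hypothesis that vertex $i+p$ is not isolated, completing the argument. The case $i>p$ is handled identically after writing $i=j+p$ with $j\le p$ and running the same comparison with the roles of the $N_{11}$- and $N_{22}$-blocks interchanged. There is no serious obstacle; the only bookkeeping point to watch is matching up the two block-index halves when transferring sparsity information from $N$ to $N^{-1}$ via Proposition \ref{sympd}.
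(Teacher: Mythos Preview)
Your proof is correct and follows essentially the same approach as the paper: reduce (by scaling) to the case where $N$ is sympPD, then use the characterization of sympPD matrices to show that the partner vertex $i\pm p$ must also be isolated. The only cosmetic difference is that the paper reads this off from $(\Omega N)^2=-I$ directly, whereas you use the equivalent inverse formula $N^{-1}=\begin{bmatrix}N_{22}&-N_{12}\trans\\-N_{12}&N_{11}\end{bmatrix}$ of Proposition~\ref{sympd}; these are two forms of the same identity, so the arguments are interchangeable.
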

\bpf Without loss of generality assume $i=1$. We assume  $N=[n_{ij}]$ is a sympPD matrix and prove that  the vertex labeled $p+1$ is isolated.
  Recall that $(\Omega N)^2=-I$. The $(p+1)$-st row of $\Omega N$ is $(n_{11},0,\dots,0)$, so the $(p+1)$-st  row of $(\Omega N)^2$ is $-n_{11}$ times the first row of $\Omega N$. Thus $((\Omega N)^2)_{p+1,j}=-n_{1,1} n_{p+1,j}$. Since $((\Omega N)^2)_{p+1,j}=0$ for $j\ne p+1$, this implies $n_{p+1,j}=0$ for $j\ne p+1$, i.e., the vertex labeled $p+1$ is isolated.  
\epf

 There are also sparse 
labeled graphs that do not allow a sympPD matrix, e.g., a single edge and isolated vertices using a labeling where an isolated vertex is labeled 1 and an endpoint of the $K_2$ is labeled $p+1$.  Any tree is connected and if its order is at least four, then it has too few edges  {to allow a sympPD matrix} by Corollary \ref{c:sympPD-sparseLB}.


\section{The Strong Symplectic Spectral Property (SSSP) and applications. }\label{s:strong}

Like the inverse eigenvalue problem for graphs, the {symplectic inverse}  eigenvalue problem for labeled graphs may not behave well with respect to insertion of edges.  Namely,
 if  $A$ 
is a positive definite matrix and $H^L$ is a labeled graph obtained from the labeled graph  of $A$ by inserting some edges, then there may 
not be a matrix $B$ with labeled graph $H^L$ having the same 
symplectic spectrum as $A$.   In the IEP-$G$ setting, an additional constraint on $A$, known as the Strong Spectral Property, guarantees that if $H$ is a graph obtained from the graph of $A$ by inserting some edges,
then there is a matrix $B$ with graph $H$  
and the same spectrum as $A$.  In this section we establish a similar property in the symplectic setting  (Theorem \ref{supergraph}, the Supergraph Theorem), establish other extensions of IEP-$G$ results that use the Strong Spectral Property, develop the analogous theory, {and apply these ideas} to give several results about the  ISEP-$G$, including that every labeled graph $G^L$ of order $2p$ allows every set of $p$ distinct positive numbers as a symplectic spectrum.

Throughout this section, we view the real $n\times n$ matrices as an inner product space, where the 
inner product between such matrices $A$ and $B$ is given by $\mbox{tr}(A^\top B)$. Note the set of symmetric $n\x n$ matrices, denoted by $\symm(n)$,
is a subspace, and for symmetric $A$ and $B$ the inner product simplifies to $\mbox{tr}(AB)$. 
 We will be working with subspaces of $\symm(n)$. For such a $U$ the {\it (symmetric) orthogonal complement}
of $U$ refers to 
\[ U^{\perp}=\{ A\in \symm(n)\colon\mbox{tr}(AB)= 0 
\mbox{ for all $B \in U$} \}; \]   
that is, the orthogonal complement with respect to $\symm(n)$.

A reader who is familiar with the Strong Spectral Property of the IEP-$G$ and wishes to access the tools without first reviewing the theoretical foundation may wish to first read Definition \ref{d:SSSP} and the statement of Theorem \ref{SSSPequiv} and then proceed to Section \ref{ss:SSSPthms}.

\subsection{SSSP foundations}
The  $2p \times 2p$ matrices of the form 
\beq\label{Eq:HamForm}
M= \begin{bmatrix}
\begin{array}{cc}
R & E \\ 
F & -R^\top
\end{array} 
\end{bmatrix},
\eeq
where $R$ is a $p\times p$ real matrix, and  $E$ and $F$ are $p\times p$ real symmetric matrices
are called \textit{Hamiltonian matrices}. The set of $2p \times 2p$ Hamiltonian 
matrices {forms} a vector space, and is closed under the commutator $[\cdot, \cdot]$ (i.e., if $M$ and $M'$ are 
Hamiltonian matrices, then $[M,M']=MM'-M'M$ is a Hamiltonian matrix). 
Thus, the set of $2p\times 2p$ Hamiltonian matrices forms a Lie Algebra  with the commutator  as the bracket operator, and is denoted by $\spLA(2p)$. 
Additionally, it is known that if $M$ is a Hamiltonian matrix, then the exponential $e^M$ of $M$ is a symplectic matrix. 
Thus, the corresponding Lie group is $\Sp(2 p$), the group of $2p\times 2p$ symplectic matrices.

\begin{defn}\label{d:SSSP}
The $2p\times 2p$ positive semidefinite matrix $N$  has the {\it Strong Symplectic Spectral Property} (SSSP) provided
\begin{equation}
\label{SSSP}
\{ NM+ M\trans N\colon M \in \spLA(2p) \} + \Span \mathcal{S}(\mathcal{G}^L(N))   = \symm(2p).
\end{equation}

In Theorem \ref{SSSPequiv} it is shown that  $N$ having the SSSP 
is equivalent to $Y=O$ {being} the only symmetric $2p\times 2p$ 
matrix for which $N\circ Y=O$ and {$\Omega NY=YN \Omega$}.
We note that the {IEP-$G$} analog of the SSSP is the Strong Spectral Property for 
a  symmetric matrix $A$ with graph $G$:
$
\{ AK+KA\colon K \in \Rnn\mbox{ and } K\trans=-K \} + \Span \SG= \symm(n).
$
This condition is equivalent to $Y=O$ being  
the only symmetric matrix 
such that $I \circ Y=O$, $A \circ Y=O$, and  $AY=YA$.
\end{defn}
\begin{ex}
Let $N=I_2$.
Then the labeled graph of $N$ is the empty graph on two vertices, and the subspace
\begin{equation}
\label{tangentspace} 
\{ NM + M\trans N\colon
M \in \spLA(2)\} 
\end{equation} 
 is the set of all $2\times 2$ symmetric matrices with trace equal to $0$. Furthermore  
$\Span \mathcal{S}(\mathcal{G}^L(N))$ is the set of all $2\times 2$ diagonal matrices. 
Thus $I_2$ has the SSSP. 
\end{ex}

\begin{ex}
Let $N=I_4$.  Then  for each $2\x 2$ matrix $R$ and 
all $2\x 2$ symmetric matrices $E$ and $F$, 
\begin{equation}
\label{eq:strongex}
N \left[ \begin{array}{cc}
R & E \\ 
F & -R\trans 
\end{array} \right]
+ \left[ \begin{array}{cc}
R & E \\ 
F & -R\trans 
\end{array} \right]\trans N=
\left[ \begin{array}{cc} 
R+ R\trans & E+F \\ 
E + F & -R-R\trans 
\end{array} 
\right]. 
\end{equation} 
Thus,  for each matrix in
\[ 
\{ NM + M^\top N\colon M \in \spLA(2p)\} 
+ \Span  \mathcal{S}(\mathcal{G}^L(N)),
\]
the $(1,2)$- and $(3,4)$-entries
are negatives of each other.  We conclude that $I_4$ does not have the SSSP. 
\end{ex}

\begin{rem}
\label{rem:Gcomp}
Let $G^L$ be a labeled graph on $2p$ vertices and $N$ be a positive definite matrix 
with labeled graph $G^L$. The span $S$ of  $\sym(G^L)$ is the subspace of 
$\symm(2p)$ consisting of all $2p\times 2p$ matrices whose labeled graph
is a subgraph of $G^L$.  Thus, $S$ is spanned by the matrices  $E_{ij} + E_{ji}$ where $ij$ is an edge of $G^L$
and the matrices $E_{ii}$ $i=1, \ldots, 2p$. 
 It follows that the $2p\times 2p$ symmetric matrix $Y$ is in $S^{\perp}$ if and only if 
$Y_{ij}=0$ whenever $i=j$ or $ij$ is an edge of $G$. Since $ N\in\symp(G^L)$, it follows 
that $Y \in S^{\perp}$ if and only if $N\circ Y=O$. 
\end{rem}

\begin{lem}\label{lem:orthocomp}
Let $N$ be a $2p \times 2p$ positive definite matrix. 
The (symmetric) orthogonal complement of 
$ \{ NM+ M\trans N\colon M \in \spLA(2p) \}$  
is 
    \[\{Y\in\symm(2p)\colon \Omega NY=YN\Omega\}.\]
\end{lem}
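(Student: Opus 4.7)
The plan is to compute the orthogonal complement directly using the inner product $\langle A,B\rangle = \tr(AB)$ on $\symm(2p)$, exploiting two algebraic facts: (i) the trace is cyclic, and (ii) the map $M \mapsto \Omega M$ gives a bijection between Hamiltonian matrices and symmetric matrices (since $M$ Hamiltonian is equivalent to $\Omega M$ being symmetric, as can be read off from the block form in \eqref{Eq:HamForm}).

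First I would simplify $\tr(Y(NM+M^\top N))$ for $Y\in\symm(2p)$ and $M\in\spLA(2p)$. Using that $Y$ and $N$ are symmetric together with the cyclic property,
\[
\tr(Y M^\top N) = \tr((YM^\top N)^\top) = \tr(N^\top M Y^\top) = \tr(MYN) = \tr(YNM),
\]
so $\tr(Y(NM+M^\top N)) = 2\tr(YNM)$. Thus $Y$ is in the orthogonal complement of $\{NM+M^\top N : M\in\spLA(2p)\}$ if and only if $\tr(YNM)=0$ for every Hamiltonian $M$.

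Next I would parameterize the Hamiltonian matrices. A direct block-matrix check (or the characterization that $M$ is Hamiltonian iff $M^\top\Omega+\Omega M = 0$) shows that $M\mapsto T:=\Omega M$ is a linear bijection from $\spLA(2p)$ onto $\symm(2p)$, with inverse $T\mapsto -\Omega T$ (using $\Omega^{-1}=-\Omega$). Substituting $M=-\Omega T$ in the condition $\tr(YNM)=0$ gives
\[
0 = -\tr(YN\Omega\, T) \quad \text{for every } T\in\symm(2p).
\]
Since the symmetric matrices are exactly the annihilator (under the trace pairing) of the antisymmetric matrices, this holds for all symmetric $T$ iff $YN\Omega$ is antisymmetric, i.e. $YN\Omega + (YN\Omega)^\top = 0$. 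Expanding $(YN\Omega)^\top = \Omega^\top N Y = -\Omega N Y$ using $\Omega^\top=-\Omega$ and $N^\top=N$, $Y^\top=Y$, this simplifies to $\Omega N Y = Y N \Omega$, which is exactly the defining relation of the claimed set.

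There is no real obstacle here beyond making the algebra careful: the proof is a direct computation once one observes the bijection $M\leftrightarrow\Omega M$ between Hamiltonian and symmetric matrices, which converts the quantifier ``for all Hamiltonian $M$'' into ``for all symmetric $T$'' and lets the standard characterization of antisymmetry via trace-orthogonality to symmetric matrices finish the argument.
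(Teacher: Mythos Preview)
Your proof is correct. Both you and the paper first reduce the orthogonality condition to $\tr(YNM)=0$ for all Hamiltonian $M$ via the same trace manipulation. From there the paper proceeds by an explicit block computation: it writes a general $W$ in $2\times 2$ block form, tests $\tr(WM)=0$ against the block form \eqref{Eq:HamForm} of Hamiltonian matrices to find that $W_{12},W_{21}$ must be skew and $W_{11}=W_{22}^\top$, and then checks by hand that this is equivalent to $\Omega W^\top = W\Omega$; finally it specializes $W=YN$. Your route is more conceptual: the observation that $M\mapsto \Omega M$ is a linear bijection $\spLA(2p)\to\symm(2p)$ replaces the block analysis entirely, turning the condition into $\tr((YN\Omega)T)=0$ for all symmetric $T$, i.e.\ $YN\Omega$ antisymmetric, which unpacks immediately to $\Omega NY=YN\Omega$. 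The two arguments are equivalent in spirit, but your version avoids coordinates and makes transparent why the condition takes the form it does; the paper's version, on the other hand, makes the structure of the annihilator of $\spLA(2p)$ explicit in block form, which can be useful elsewhere.
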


\bpf 
From the definition, the (symmetric) orthogonal complement of  the first summand in
\eqref{SSSP} is 
\[ 
\{ Y \in \symm(2p)\colon
\tr(Y(NM+M\trans N))=0 
\mbox{ for all $M \in \mathfrak{sp}(2p)$} \}.
\]
Note that 
\begin{eqnarray*}
\tr(Y(NM + M\trans N)) &= & \mbox{tr}(YNM + YM\trans N) \\
& = & \tr(YNM + NMY)\\
& = & 2\tr(YNM).  
\end{eqnarray*}
The second inequality follows from the facts that { $\tr(A+B)=\tr(A)+\tr(B)$,} the trace of a matrix {equals the trace of} its transpose, and  both  $N$ and $Y$ are symmetric. 
The third inequality follows from the fact that 
$\mbox{tr}(AB)= \mbox{tr}(BA)$ for any appropriately sized matrices. 
Hence the {(symmetric)} orthogonal complement of 
the first summand in \eqref{SSSP}
is the set of symmetric matrices $Y$ such 
$YN \in \mathcal{W}$ {where $\mathcal{W}$ is the set of $2p\times 2p$
matrices such that $\tr(WM)=0$ for all $M \in \spLA (2p)$}.

We  determine  the form of
\[
W= \begin{bmatrix}
    W_{11} & W_{12} \\
    W_{21} & W_{22}
\end{bmatrix}\in \mathcal{W},
\] where each $W_{ij}$ is $p\times p$.
Using the block form for a Hamiltonian matrix $M$ in \eqref{Eq:HamForm}, we see that $W\in \mathcal{W}$ if and only if 
$
\mbox{tr} (W_{11}R  + W_{12}F + 
W_{21}E-W_{22}R^\top)=0 $ for all symmetric  $E$ and $F$, and all $R\in\R^{p\x p}$. As $R$, $E$ and $F$ have no common positions, 
 $W \in \mathcal{W}$ if and only 
if $\mbox{tr}(W_{12}F)=0$ for all symmetric $F$, 
$\mbox{tr}(W_{21}E)=0$ for all symmetric $E$ and 
$\mbox{tr}((W_{11} - W_{22}^\top)R)=0$ for all $R$.
Thus, $W \in \mathcal{W}$ if and only if $W_{12}$ 
and $W_{21}$ are skew-symmetric, and  $W_{11}=W_{22}^\top$. 
One can verify that these  last conditions are equivalent to 
$ \Omega W^\top= W\Omega$. Therefore $\mathcal{W}$, which is the (symmetric) orthogonal complement of $ \{ NM+ M\trans N\colon M \in \spLA(2p) \}$,  is the set of symmetric matrices $Y$ such that
$\Omega N Y= YN \Omega$.
\epf

\begin{thm}
\label{SSSPequiv}
Let $N$ be a $2p\times 2p$ real positive definite matrix.
Then  $N$ has the SSSP if and only if $Y=O$ is the only symmetric matrix such that $N \circ Y=O$ and $ \Omega NY=YN\Omega$.
\end{thm}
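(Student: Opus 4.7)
The plan is to prove this as a direct application of the preceding Lemma \ref{lem:orthocomp} together with Remark \ref{rem:Gcomp}, via the standard duality between sums of subspaces and intersections of orthogonal complements.

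First I would recall the general fact that if $U_1$ and $U_2$ are subspaces of the finite-dimensional inner-product space $\operatorname{Symm}(2p)$ (endowed with the trace form), then
\[
U_1 + U_2 = \operatorname{Symm}(2p) \quad \Longleftrightarrow \quad U_1^{\perp} \cap U_2^{\perp} = \{O\}.
\]
Applied with $U_1 = \{NM + M^{\top}N : M \in \spLA(2p)\}$ and $U_2 = \operatorname{span}\mathcal{S}(\GL{L}(N))$, the SSSP condition \eqref{SSSP} is equivalent to $U_1^{\perp} \cap U_2^{\perp} = \{O\}$.

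Next I would identify these two orthogonal complements using results already established. By Lemma \ref{lem:orthocomp},
\[
U_1^{\perp} = \{Y \in \operatorname{Symm}(2p) : \Omega N Y = Y N \Omega\}.
\]
By Remark \ref{rem:Gcomp}, and using that $N$ is positive definite (so its diagonal entries are nonzero and hence record every diagonal position, while its off-diagonal nonzero positions are exactly the edges of $\GL{L}(N)$),
\[
U_2^{\perp} = \{Y \in \operatorname{Symm}(2p) : N \circ Y = O\}.
\]

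Combining these, $N$ has the SSSP if and only if the only symmetric matrix $Y$ lying in both $U_1^{\perp}$ and $U_2^{\perp}$ is $Y = O$, which is precisely the condition stated in the theorem. There is no real obstacle here: the two characterizations of the orthogonal complements have already been done, and all that is needed is to invoke the subspace-duality observation and conclude.
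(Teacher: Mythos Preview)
Your proposal is correct and follows essentially the same approach as the paper: invoke the duality $U_1+U_2=\operatorname{Symm}(2p)\iff U_1^\perp\cap U_2^\perp=\{O\}$, then identify the two orthogonal complements via Lemma~\ref{lem:orthocomp} and Remark~\ref{rem:Gcomp}.
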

\begin{proof}
The matrix $N$ has the  SSSP if and only if 
the orthogonal complements of the  two spaces in (\ref{SSSP}) intersect only in the zero matrix.  The theorem now follows from Remark \ref{rem:Gcomp} and Lemma \ref{lem:orthocomp}.
\end{proof}

 We see that  the Strong Symplectic Spectral Property  for the complete graph  is similar to  the Strong Spectral Property for the complete graph: By the previous theorem, if $N\in\symp(K_{2p})$ then $N$ has the SSSP (for any labeling $L$ of $K_{2p}$), because $N\circ Y=O$ implies $Y=O$, just as in the case of the Strong Spectral Property  (and for the same reason{, since}   the condition $N\circ Y=O$ for any positive definite matrix $N$ implies $Y\circ I=O$).

There are other labeled graphs for which every matrix has the SSSP, as seen in the next example.

\begin{ex}\label{ex:pK1veeKp-SSSP}
   Let  
$N\in\symp((\overline{K_{p}} \vee K_{p})^M)$ (cf. Example \ref{pK1veeKp}).  Then $N=\mtx{E & B\\B\trans & C}$ where $E$ is diagonal and every entry of $B$ and $C$ is nonzero, so $N\circ Y=O$ implies $Y=\mtx{X& O\\O& O}$ where $X$ is a symmetric $p\x p$ matrix.  Then $\Omega NY=YN\Omega$ implies $-EX=O$ and thus $X=O$. 

In contrast to the ISEP-$G$ and SSSP, there are matrices $A\in\symm(\OL{K_p}\vee K_p)$ that do not have the Strong Spectral Property for the IEP-$G$ and we exhibit one here. Let $A=\mtx{O & B\\B\trans & C}$ where $B=\mtx{
\begin{array}{rrrr}
-1 & -1 & -1 & -1\\3 & 3&3&3\\ -3 & -3&-3&-3\\1 & 1 & 1 & 1\end{array}}$ and $C$ is any $4\x 4$ symmetric matrix with every entry nonzero so that
$A\in\symm(\OL{K_p}\vee K_p)$.
For $Y=\mtx{X & O\\O & O}$  with $X=\mtx{0 &1& 4 & 9\\1 & 0 &1& 4\\4 & 1 & 0 & 1\\9 & 4&1&0}$, we have $Y\circ A=I\circ Y=O$ and $AY-YA=\mtx{O & -XB\\B\trans X & O}=\mtx{O & O\\O &  O}$, so $A$ does not have the Strong Spectral Property.
\end{ex}

 The matrix $\mtx{ I & B_p \\
      B_p & I + B_p^2 }\in \symp(\tp_{2p}^I)$, which is a sympPD matrix, fails to have the SSSP when $p=3$ as seen in the next example.

      \begin{ex}
           Let
          \[N=\mtx{ I & B_3 \\
      B_3 & I + B_3^2 }=\lb\begin{array}{cccccc}
 1 & 0 & 0 & 1 & 1 & 0 \\
 0 & 1 & 0 & 1 & 0 & 1 \\
 0 & 0 & 1 & 0 & 1 & 0 \\
 1 & 1 & 0 & 3 & 1 & 1 \\
 1 & 0 & 1 & 1 & 3 & 0 \\
 0 & 1 & 0 & 1 & 0 & 2 \\
\end{array}\rb \mbox{ and }Y=\lb\begin{array}{rrrrrr}
 0 & 1 & 0 & 0 & 0 & -1 \\
 1 & 0 & 2 & 0 & -1 & 0 \\
 0 & 2 & 0 & 0 & 0 & -1 \\
 0 & 0 & 0 & 0 & 0 & 0 \\
 0 & -1 & 0 & 0 & 0 & 1 \\
 -1 & 0 & -1 & 0 & 1 & 0 
\end{array}\rb\!.\]
Observe that $Y$ is symmetric, $Y\circ N=O$, and   $\Omega N Y=Y N \Omega$, so $N$ does not have the SSSP.   \end{ex}

In the next result, we use the SSSP formulation in Theorem \ref{SSSPequiv} to  characterize the positive definite diagonal matrices that have the SSSP.

\begin{lem}\label{Diag}
Let 
$N= D \oplus E$ be a  $2p \times 2p$ positive definite diagonal matrix 
where $D=\diag(d_1, \ldots, d_p)$ and $E= \diag(e_1, \ldots, e_p)$. Then  the symplectic eigenvalues of 
$N$ {are $\sqrt{d_ie_i}, i=1,\dots,p$}, and $N$ has the SSSP if and only if the 
diagonal entries of $DE$ are distinct. 
In particular, if $D$ has distinct diagonal entries, then $\spspec(D\oplus D)=\{d_1, \ldots, d_p\}$ and $D \oplus D$ has SSSP. 
\end{lem}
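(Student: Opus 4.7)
The symplectic spectrum part is immediate: Example~\ref{ex:directsum} says that for $N=D\oplus E$ the (standard) eigenvalues of $\Omega N$ are the square roots of the eigenvalues of $-DE$, and since $D,E$ are diagonal, $DE=\diag(d_1e_1,\dots,d_pe_p)$. So I would simply invoke that example (or equivalently Proposition~\ref{sympev}) to read off $\spspec(N)=\{\sqrt{d_ie_i}:i=1,\dots,p\}$. The ``in particular'' statement is then the special case $E=D$: distinct $d_i$ give distinct $d_i^2$, yielding distinct diagonal entries of $DE=D^2$, so the SSSP half of the lemma will supply SSSP automatically.

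For the SSSP equivalence I would use Theorem~\ref{SSSPequiv}: I need to characterize the symmetric $Y$ with $N\circ Y=O$ and $\Omega NY=YN\Omega$. Since $N$ is diagonal with nonzero diagonal, $\G^L(N)$ has no edges, so $N\circ Y=O$ is equivalent to $Y$ having zero diagonal. Writing $Y=\mtx{Y_{11} & Y_{12}\\ Y_{12}\trans & Y_{22}}$ with each block $p\times p$ and $Y_{11},Y_{22}$ symmetric, I would compute $\Omega NY$ and $YN\Omega$ block by block and extract the four block equations. Using that $D,E$ are diagonal, these reduce entrywise to
\[
e_i(Y_{22})_{ij}=d_j(Y_{11})_{ij},\quad d_i(Y_{12})_{ij}=-d_j(Y_{12})_{ji},\quad e_j(Y_{12})_{ij}=-e_i(Y_{12})_{ji}.
\]

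The heart of the argument is then to show these force $Y=O$ exactly when the $d_ie_i$ are distinct. For the diagonal blocks, combining the equation above with its $(j,i)$-version (via symmetry of $Y_{11},Y_{22}$) gives $(d_ie_i-d_je_j)(Y_{11})_{ij}=0$ and similarly for $Y_{22}$, so off-diagonal entries vanish whenever $d_ie_i\neq d_je_j$; the diagonal entries of $Y_{11},Y_{22}$ are already zero from $N\circ Y=O$. For the $Y_{12}$ block, eliminating $(Y_{12})_{ji}$ between the two relations gives $(d_ie_i-d_je_j)(Y_{12})_{ij}=0$, and the $i=j$ case of either relation gives $(Y_{12})_{ii}=0$. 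Thus if the $d_ie_i$ are pairwise distinct, $Y=O$ and $N$ has the SSSP.

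For the converse, suppose $d_ie_i=d_je_j$ with $i\neq j$. I would exhibit an explicit nonzero $Y$: set $Y_{12}=O$, put $(Y_{11})_{ij}=(Y_{11})_{ji}=e_i$ and $(Y_{22})_{ij}=(Y_{22})_{ji}=d_j$ with all other entries zero. A direct check using $d_ie_i=d_je_j$ shows $E Y_{22}=Y_{11}D$ entrywise, so $Y$ is symmetric, has zero diagonal, and satisfies $\Omega NY=YN\Omega$, violating SSSP. I do not anticipate a serious obstacle here; the only mildly delicate point is correctly pairing each entrywise constraint with its symmetric partner so that the ``distinctness of $d_ie_i$'' hypothesis emerges cleanly in both directions.
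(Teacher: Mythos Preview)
Your proposal is correct and follows essentially the same route as the paper: use Theorem~\ref{SSSPequiv}, write $Y$ in $2\times 2$ block form, derive the block equations from $\Omega NY=YN\Omega$, and analyze them entrywise to see that distinctness of the $d_ie_i$ is exactly what forces $Y=O$; the explicit counterexample for the converse matches the paper's (with a harmless relabeling of which entries are $e_i$ versus $d_j$). The only stylistic difference is that the paper handles the off-diagonal block $Y_{12}$ via the commutation relation $DE\,Y_{12}^\top=Y_{12}^\top\,DE$ rather than your direct entrywise elimination, but the content is identical.
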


\bpf
Since  
$(\Omega N)^2 = (-ED) \oplus -(DE) = - ( DE \oplus DE)$,  the symplectic eigenvalues of 
$N$ are $\sqrt{d_1e_1}, \ldots, \sqrt{d_pe_p}$.

 Let 
$
Y= \left[ \begin{array}{c c} W & X \\  
X\trans & Z  \end{array} \right]
$
be a $2p\times 2p $ symmetric matrix such that 
$\Omega NY= YN \Omega$ and $N\circ Y=O$.  The latter implies the diagonals  of $W$ and $Z$ are  zero.
Then $\Omega NY= YN \Omega$ implies
\begin{align}
EX\trans & = -XE,  \label{oneL} \\
-DX & = X\trans D, \label{twoL}\\
EZ& = WD, \mbox{ and } \label{threeL} \\
DW& = ZE. 
\label{fourL} 
\end{align}

First observe that \eqref{oneL} requires that every diagonal entry of $X$ is zero. Pre-multiplying both sides of \eqref{oneL} by $D$ and substituting in 
\eqref{twoL} yields $DEX\trans= X\trans DE$. If the diagonal entries of $DE$ are distinct, 
we conclude that the off-diagonal entries of $X$  are zero, i.e., $X=O$.

Let $W=[w_{ij}]$ and $Z=[z_{ij}]$. Examining the $(k,\ell)$ and $(\ell, k)$ entries of { \eqref{fourL}} and using the symmetry of $W$ and $Z$ gives
$d_kw_{k\ell}= e_\ell z_{k\ell}$ and 
$d_\ell w_{k\ell}= e_k z_{k\ell}$
These equations imply $w_{k\ell}=z_{k\ell}=0$ or $d_k e_k=d_\ell e_\ell$. Thus $N$ has the SSSP if the entries of $DE$ are distinct.

 Suppose that $N$ has a multiple symplectic eigenvalue. 
Then, without loss of generality, $d_1e_1=d_2e_2=1$.
Let $Y= e_{2}(E_{1,2} + E_{2,1}) + d_{1}(E_{ p+2,p+1} +E_{ p+1,p+2})$.
Then it can be verified that $Y$ is symmetric, $N\circ Y=O$ and $\Omega NY= YN\Omega $. 
\epf

\subsection{Fundamental theorems and consequences of the SSSP}\label{ss:SSSPthms}
In this section, we develop fundamental theorems about the SSSP.
The results and proofs, are analogous to those for the Strong Spectral Property. 
We place the proofs in the appendix for completeness.

The criteria given in Definition \ref{d:SSSP} for a 
positive definite matrix $N$
to have the SSSP  can be phrased 
in terms of a system of linear equations by viewing the
entries $y_{ij}$ with $i\leq j$ as distinct variables.
This leads to an effective purely linear algebraic way to determine whether $N$ has the SSSP.  
More precisely, for a positive integer  $p$, we denote by $E_{ij}$ the $2p\times 2p$
matrix with a  {one} in position $(i,j)$ and zeros elsewhere. One can verify that 
\[ \{ E_{i, j+p}+ E_{j,i+p}: 1\leq i\leq j \leq p\}\cup
\{  E_{i+p,j}+E_{j+p,i}: 1\leq i\leq j \leq p\}
\cup \{  E_{ij}-E_{j+p,i+p} : 1\leq i,j \leq p\} 
\]
is a basis for $\spLA(2p)$. Thus $\spLA (2p)$ has dimension $ 2\frac{p(p+1)}2 +p^2= 2p^2 + p $.
As we need an ordered basis, we order this first as the order of the three sets just defined.  Within each of the first two sets we order by the pair $(i,j)$ running through diagonal (in order), followed by each superdiagonal in order.  For the third, each superdiagonal is followed by the corresponding subdiagonal. This index order for a $4\x 4$ matrix (with $p=2$ is $(1,1), (2,2), (1 , 2)$ for each of the first two  sets and $(1,1), (2,2), (1,2), (2,1)$ for the third.  This ordered basis is called the \emph{standard ordered basis} (for verification). 

Given an $2p\times 2p$ symmetric matrix $M$ we denote by 
$\uvec(M)$  the $(2p^2+p) \times 1$ vector obtained from $M$ by stacking (in  order) 
the upper triangular parts of columns of $M$.
If $M=[m_{ij}]$ is $4 \times 4$,  
\[ \uvec(M)^\top=
\left[ \begin{array}{cccccccccc}
m_{11} & m_{12}& m_{22}& m_{13} & m_{23} & m_{33} &
m_{14}& m_{24} & m_{34} & m_{44} 
\end{array} \right].
\] 
Note that $\uvec$ defines a bijection 
between  $(2p)\x(2p)$ symmetric matrices and $\mathbb{R}^{2p^2+p}$.

The {\it full SSSP verification matrix} of the $2p\times 2p$ positive definite matrix  $N$ is the  $ (2p^2\times p) \times (2p^2+p)$ matrix, $\Phi(N)$, whose columns are 
the {$\uvec (M^\top N + N M)$ where $M$}  runs over the standard ordered basis vectors of $\spLA(2p)$ {in order}  and the \emph{SSSP verification matrix}, $\Xi(N)$, of $N$ is the submatrix of rows  {indexed} by  the edges of $\Gc$. 

\begin{ex}\label{ex:verify}
Let 
$N=[n_{ij}]$ be  a $4\times 4$ symmetric matrix. 
Then $\Phi(N)$ equals
\[
\scriptscriptstyle{
\left[\begin{array}{cccccccccc}
0 & 0 & 0 & 4 \, n_{13} & 0 & 2 \, n_{14} & 2 \, n_{11} & 0 & 0 & 2 \, n_{12} \\
0 & 0 & 0 & 2 \, n_{23} & 2 \, n_{14} & n_{13} + n_{24} & n_{12} & n_{12} & n_{11} & n_{22} \\
0 & 0 & 0 & 0 & 4 \, n_{24} & 2 \, n_{23} & 0 & 2 \, n_{22} & 2 \, n_{12} & 0 \\
2 \, n_{11} & 0 & n_{12} & 2 \, n_{33} & 0 & n_{34} & 0 & 0 & -n_{14} & n_{23} \\
2 \, n_{12} & 0 & n_{22} & 0 & 2 \, n_{34} & n_{33} & -n_{23} & n_{23} & n_{13} - n_{24} & 0 \\
4 \, n_{13} & 0 & 2 \, n_{23} & 0 & 0 & 0 & -2 \, n_{33} & 0 & -2 \, n_{34} & 0 \\
0 & 2 \, n_{12} & n_{11} & 2 \, n_{34} & 0 & n_{44} & n_{14} & -n_{14} & 0 & -n_{13} + n_{24} \\
0 & 2 \, n_{22} & n_{12} & 0 & 2 \, n_{44} & n_{34} & 0 & 0 & n_{14} & -n_{23} \\
2 \, n_{14} & 2 \, n_{23} & n_{13} + n_{24} & 0 & 0 & 0 & -n_{34} & -n_{34} & -n_{44} & -n_{33} \\
0 & 4 \, n_{24} & 2 \, n_{14} & 0 & 0 & 0 & 0 & -2 \, n_{44} & 0 & -2 \, n_{34}
\end{array}\right]}\displaystyle{.}
\]
Here the rows correspond to entries 
$(1,1)$, 
$(1,2)$, $(2,2)$, 
$(1,3)$, $(2,3)$, $(3,3)$,  $(1,4)$, $(2,4)$, $(3,4)$ and $(4,4)$.
The columns correspond to 
$2E_{13}$, $2E_{24}$, $E_{14}+E_{23}$, $2E_
{3,1}$, $2E_{4,2}$, $E_{3,2}+E_{4,1}$, $E_{11} - E_{33}$, $E_{22}-E_{44}$, $E_{12}- E_{4,3}$ and $E_{21}-E_{3,4}$.

\end{ex}

\begin{ex}\label{ex:verify-cycle}

 Let $G^L$ be a cycle on four vertices  with the vertices labeled in cycle order; 
denote this by $G^L=C_4^I$. If $N\in\symp(C_4^I)$, then $\OL{C_4^I}$ has edges $\{1,3\}$  and $\{2,4\}$, so $n_{13}=n_{23}=0$ and all other entries are nonzero. Furthermore, 
\beq\label{eq:P4SSSP}
\Xi(N)=
\left[\begin{array}{cccccccccc}
2 \, n_{11} & 0 & n_{12} & 2 \, n_{33} & 0 & n_{34} & 0 & 0 & -n_{14} & n_{23} \\
0 & 2 \, n_{22} & n_{12} & 0 & 2 \, n_{44} & n_{34} & 0 & 0 & n_{14} & -n_{23} 
\end{array}\right].
\eeq
Examination of the {first and second} columns of $\Xi(N)$ shows  that the rows of $\Xi(N)$ are independent.  Thus Theorem \ref{t:verification} below shows that every positive definite matrix matrix $N$ with $\G^L(N)=C_4^I$ has the SSSP.
\end{ex}
\begin{thm} [{SSSP Verification Matrix}]
\label{t:verification}
Let $N$ be a $2p\times 2p$ positive definite matrix with labeled graph $G^L$. 
Then the following are equivalent
\ben[$(a)$]
\item \label{a:verification} $N$ has the SSSP.
\item \label{b:verification} 
The column space of $\Xi(N)$ spans $\mathbb{R}^{2p^2+p -|E(G)|}$.
\item \label{c:verification} The rows of $\Xi(N)$ are linearly independent. 
\een
\end{thm}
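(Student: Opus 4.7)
The plan is to translate the SSSP condition into a surjectivity statement for a linear map, and then interpret this as a rank condition on the matrix $\Xi(N)$. By Definition \ref{d:SSSP}, $N$ has the SSSP if and only if
\[
U + W = \symm(2p),
\]
where $U = \{NM + M^\top N : M \in \spLA(2p)\}$ and $W = \Span\,\sym(\GLG)$. Since $\sym(\GLG)$ consists of symmetric matrices with off-diagonal support in $E(G)$, the subspace $W$ is spanned by $\{E_{ii}: i=1,\dots,2p\} \cup \{E_{ij} + E_{ji} : ij \in E(G)\}$, and a complementary subspace to $W$ in $\symm(2p)$ is spanned by $\{E_{ij} + E_{ji} : ij \in E(\Gc)\}$.

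Next I would identify $\symm(2p)$ with $\mathbb{R}^{2p^2+p}$ via $\uvec$. Under this identification, the projection $\pi$ from $\symm(2p)$ onto the complement of $W$ corresponds to selecting those $\uvec$ coordinates indexed by the upper-triangular positions $(i,j)$ with $ij \in E(\Gc)$. The condition $U + W = \symm(2p)$ is equivalent to surjectivity of the composite
\[
\spLA(2p)\ \xrightarrow{M \mapsto NM + M^\top N}\ \symm(2p)\ \xrightarrow{\pi}\ \symm(2p)/W.
\]
By construction, $\Phi(N)$ is the matrix of the first map (in $\uvec$ coordinates with respect to the standard ordered basis of $\spLA(2p)$), and $\Xi(N)$ is obtained from $\Phi(N)$ by selecting precisely the rows indexed by $E(\Gc)$. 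Hence $\Xi(N)$ is the matrix of the composite map, and SSSP holds if and only if the columns of $\Xi(N)$ span the full codomain, establishing $(a) \iff (b)$.

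Finally, $(b) \iff (c)$ is the standard linear algebra equivalence that a matrix's columns span the ambient space of dimension equal to its number of rows if and only if its rows are linearly independent; both conditions say the matrix has full row rank.

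The only subtle point to verify is that selecting the rows of $\Phi(N)$ indexed by $E(\Gc)$ faithfully represents the composition with $\pi$. This follows because the basis of the complement of $W$ consists exactly of the elementary symmetric generators supported at the upper-triangular positions of $E(\Gc)$, which coincide with the $\uvec$ coordinates being extracted. Once this bookkeeping is in place, the theorem is immediate from Theorem \ref{SSSPequiv} (or equivalently Definition \ref{d:SSSP}) together with elementary linear algebra.
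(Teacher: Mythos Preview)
Your proof is correct and follows essentially the same approach as the paper: both arguments identify $\symm(2p)$ with $\mathbb{R}^{2p^2+p}$ via $\uvec$, recognize that $\Phi(N)$ represents $M\mapsto NM+M^\top N$, that restricting to the rows indexed by $E(\Gc)$ corresponds to passing to $\symm(2p)/\Span\,\sym(G^L)$, and that (b)$\iff$(c) is elementary. Your quotient-space/projection phrasing is slightly more abstract than the paper's direct column-space description, but the content is identical.
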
 

 The proof is presented in Appendix \ref{appendix}. 
 It is more common to determine an SSSP verification matrix for a specific $N$, as in the next example.

\begin{ex}\label{KppSympPD-SSSP} 
Let 
$N=
\begin{bmatrix}
    1 & 0 & -1/\sqrt{2} & 1/\sqrt{2} \\0 & 1 & 1/\sqrt{2} & 1\sqrt{2} \\-1/\sqrt{2} & 1/\sqrt{2} & 2 & 0 \\1/\sqrt{2} & 1/\sqrt{2} & 0 & 2
\end{bmatrix}$
be  a $4\times 4$ symmetric matrix (observe $N$ is sympPD by Example \ref{KppSympPD}). 
 Then, using the same indexing as in the previous example,  
\[ \Xi(N)=\left[\begin{array}{rrrrrrrrrr}
0 & 0 & 0 & \sqrt 2 & \sqrt 2 & 0 & 0 & 0 & 1 & { 1} 
\\
\sqrt 2 & \sqrt 2 & 0 & 0 & 0 & 0 & 0 & 0 & -2 & -2 
\end{array}\right].
\] 
Here the rows of $\Xi(N)$ correspond to 
entries $(1,2)$ and $(3,4)$ of $N$.
Note that $\Xi(N)$ has rank 2, and hence $N$ has the SSSP.   Observe that $\G^L(N)=K_{2,2}^M$. 
\end{ex}

The following shows that the set of positive definite $2p\times 2p$ matrices with the SSSP is an open subset of {$\symm(2p)$}. 
Here $\| M\|$ denotes any fixed 
induced matrix norm, e.g., the spectral norm.

\begin{cor}
\label{open}
Let $N$ be a $2p\times 2p$ positive definite matrix with the SSSP.
Then there exists $\epsilon>0$  such that every {symmetric} matrix   $M$ with 
 $\|N-M \|<\epsilon$ is positive definite and has the SSSP. 
\end{cor}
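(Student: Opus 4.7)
The plan is to handle the two conclusions separately, both via continuity arguments, but with a key combinatorial subtlety in the SSSP part.

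For positive definiteness, the eigenvalues of a symmetric matrix depend continuously on its entries, and $\lambda_{\min}(N)>0$ since $N$ is positive definite. So there is some $\epsilon_1>0$ such that $\|N-M\|<\epsilon_1$ forces $\lambda_{\min}(M)>\lambda_{\min}(N)/2>0$, and $M$ is positive definite. This is the routine part.

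For the SSSP, I would invoke Theorem \ref{t:verification}: it suffices to show that the rows of $\Xi(M)$ are linearly independent. As illustrated in Example \ref{ex:verify}, every entry of $\Phi(N)$ is a linear (hence continuous) function of the entries of $N$. Let $r=|E(\overline{\G(N)})|$ be the number of rows of $\Xi(N)$. Since $N$ has the SSSP, those $r$ rows are linearly independent, so there exists a set $\gamma$ of $r$ column indices such that the $r\times r$ submatrix $\Xi(N)[\,\cdot\,,\gamma]$ has nonzero determinant.

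The main subtlety is that the graph of $M$ may differ from that of $N$: nonzero entries of $N$ remain nonzero under a small perturbation, but zero entries may become nonzero. Thus $E(\G(N))\subseteq E(\G(M))$, which gives $E(\overline{\G(M)})\subseteq E(\overline{\G(N)})$. Consequently the rows of $\Xi(M)$ form a \emph{subset} of the rows of the auxiliary matrix $\widetilde{\Xi}(M)$ obtained from $\Phi(M)$ by selecting the rows indexed by $E(\overline{\G(N)})$, and $\widetilde{\Xi}(N)=\Xi(N)$. The key point is that linear independence of a row set is inherited by any subset, so it is enough to prove $\widetilde{\Xi}(M)$ has linearly independent rows.

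To finish, observe that $\det \widetilde{\Xi}(M)[\,\cdot\,,\gamma]$ is a polynomial, hence continuous, function of the entries of $M$, and this determinant is nonzero at $M=N$. So there exists $\epsilon_2>0$ such that $\|N-M\|<\epsilon_2$ implies $\det \widetilde{\Xi}(M)[\,\cdot\,,\gamma]\neq 0$. Then $\widetilde{\Xi}(M)$ has rank $r$, its rows are linearly independent, and a fortiori so are the rows of $\Xi(M)$. Theorem \ref{t:verification} then gives the SSSP for $M$. Taking $\epsilon=\min(\epsilon_1,\epsilon_2)$ completes the proof. The main conceptual hurdle is recognizing that one should work with the ``enlarged'' verification matrix $\widetilde{\Xi}(M)$ rather than $\Xi(M)$ directly, because the indexing set of rows is graph-dependent and therefore unstable under perturbation; the fact that perturbation can only \emph{add} edges (and hence only \emph{remove} rows) is what saves the argument.
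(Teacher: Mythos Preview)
Your proof is correct and follows essentially the same approach as the paper's: continuity of $\Phi$, the observation that $\G(M)$ is a supergraph of $\G(N)$ for $M$ near $N$, and the resulting row-subset relationship combined with Theorem~\ref{t:verification}. Your treatment is in fact more careful than the paper's terse argument, which simply asserts ``the rows of $\Xi(M)$ are a subset of the rows of $\Xi(N)$''; your introduction of $\widetilde{\Xi}(M)$ makes precise the distinction between the row \emph{indices} (which are a subset) and the row \emph{entries} (which are merely close), and your determinant-continuity argument cleanly bridges the two.
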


\begin{proof}
Note that  $\Phi(N)$ is a 
continuous function of the entries of $N$.
So for symmetric $M$ sufficiently close to $N$, 
$M$ is positive definite and $\G(M)$ is supergraph of $\G(N)$. Thus the rows of $\Xi(M)$ 
are a subset of the rows of $\Xi(N)$. 
Hence the rows of $\Xi(M)$ are linearly independent.  
By Theorem \ref{t:verification}, $M$ has the SSSP. 
\end{proof}

The next two results show that if a labeled graph allows a positive definite matrix 
with a given symplectic spectrum, then each labeled graph obtained by inserting edges allows a 
positive definite matrix with the same symplectic spectrum. 

\begin{thm}{\rm \bf (Supergraph Theorem)}
\label{supergraph}
\\
Let $N$ be a  $2p\times 2p$ positive definite matrix having the SSSP and   labeled graph $G^L$. 
Let $H^L$ a labeled graph on $2p$ vertices obtained from $G^L$ by inserting some edges.
Then there exists a matrix  with  
labeled graph $H^L$ having the same symplectic eigenvalues as $N$ and also having the SSSP.
\end{thm}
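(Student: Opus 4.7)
The plan is to adapt the classical supergraph theorem argument for the Strong Spectral Property to the symplectic setting, invoking the submersion theorem applied to the symplectic congruence map.  Let $V = \Span \sym(\GLG)$, let $V^{\perp}$ denote its orthogonal complement in $\symm(2p)$, and let $\pi: \symm(2p) \to V^{\perp}$ be the orthogonal projection.  Define the smooth map $f: \Sp(2p) \to V^{\perp}$ by $f(S) = \pi(S^{\top} N S)$.  Since $N \in V$, we have $f(I) = 0$.  A curve $S(t) = I + tM + O(t^2)$ in $\Sp(2p)$ with $M \in \spLA(2p)$ yields $S(t)^{\top} N S(t) = N + t(M^{\top} N + NM) + O(t^2)$, so the differential of $f$ at $I$ sends $M$ to $\pi(M^{\top} N + NM)$.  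The SSSP hypothesis $\{NM + M^{\top} N : M \in \spLA(2p)\} + V = \symm(2p)$ is exactly the statement that this differential is surjective onto $V^{\perp}$.  By the submersion theorem, the image of $f$ covers a neighborhood of $0$ in $V^{\perp}$.

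Next I would choose a target direction $Y$ that forces the desired supergraph.  The elements of $V^{\perp}$ are precisely the symmetric matrices whose entries vanish on the diagonal and at positions $ij$ with $ij \in E(\GLG)$.  Let $Y \in V^{\perp}$ be a symmetric matrix whose off-diagonal entry at position $(i,j)$ is nonzero if and only if $ij \in E(H^L) \setminus E(\GLG)$.  For every sufficiently small $t > 0$, $tY$ lies in the image of $f$, so there exists $S_t \in \Sp(2p)$ near $I$ with $N_t := S_t^{\top} N S_t = tY + X_t$ for some $X_t \in V$.  Since $S_t$ is symplectic, $N_t$ is positive definite and symplectically congruent to $N$, hence has the same symplectic spectrum as $N$.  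Its labeled graph is exactly $H^L$: entries at positions $(i,j)$ with $i \neq j$ and $ij \notin E(H^L)$ vanish since both $tY$ and $X_t \in V$ are zero there; diagonal entries and off-diagonal entries at edges of $\GLG$ are close to the corresponding nonzero entries of $N$, because $X_t \to N$ as $t \to 0$; and entries at edges in $E(H^L) \setminus E(\GLG)$ equal $tY_{ij} \neq 0$.

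For the SSSP of $N_t$ I would use the openness argument underlying Corollary \ref{open}.  Since $\G^L(N_t) = H^L \supseteq \GLG$, the edges of $\overline{H^L}$ form a subset of the edges of $\overline{\GLG}$, so the rows of $\Xi(N_t)$ (indexed by $E(\overline{H^L})$) form a submatrix whose row indexing is a subset of that of $\Xi(N)$.  As $t \to 0$, $\Phi(N_t) \to \Phi(N)$ entrywise, so $\Xi(N_t)$ approaches the corresponding row-subset of $\Xi(N)$, which has linearly independent rows by the SSSP of $N$ and Theorem \ref{t:verification}.  Linear independence being open on matrices of fixed size, $\Xi(N_t)$ has linearly independent rows for all sufficiently small $t$, and Theorem \ref{t:verification} then yields the SSSP of $N_t$.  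The main conceptual point is that the surjectivity in the first paragraph is precisely the SSSP hypothesis; the only step requiring genuine care is engineering $Y$ so that the resulting graph is exactly $H^L$, rather than merely contained in $H^L$.
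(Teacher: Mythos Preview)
Your argument is correct and follows essentially the same route as the paper: both exploit the submersion/inverse-function theorem on the symplectic-congruence map, using the SSSP to guarantee surjectivity of the linearization, then hit a small perturbation supported on the new edges.  The paper parametrizes the domain as $\spLA(2p)\times\Span\sym(G^L)$ via $f(S,B)=e^{S^\top}Ne^{S}-B$, whereas you compose with the projection $\pi$ onto $V^\perp$; these are two standard presentations of the same surjectivity argument.  In fact you are slightly more explicit than the paper's appendix proof on one point: you supply the openness argument (via $\Xi(N_t)\to\Xi(N)$ row-wise) to verify that the resulting matrix $N_t$ itself has the SSSP, a conclusion the paper states in the theorem but does not spell out in its proof.
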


The proof of the Supergraph Theorem is similar to that used for the analogous IEP-$G$ result and can be found in Appendix \ref{appendix}.

\begin{cor}\label{AllSimple}
    Every labeled graph of order $n=2p$ allows 
    every set of $p$ distinct positive real numbers as the symplectic spectrum of matrix with the SSSP.
\end{cor}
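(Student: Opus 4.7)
The plan is to combine Lemma \ref{Diag} with the Supergraph Theorem (Theorem \ref{supergraph}) in essentially one step. Given $p$ distinct positive reals $d_1,\dots,d_p$, I would first build a starting matrix whose symplectic spectrum is exactly $\{d_1,\dots,d_p\}$ and which already has the SSSP; the obvious candidate is the diagonal matrix
\[
N_0 \;=\; D \oplus D, \qquad D=\diag(d_1,\dots,d_p).
\]
By Lemma \ref{Diag}, $\spspec(N_0)=\{d_1,\dots,d_p\}$ and, since the $d_i$ are distinct, $N_0$ has the SSSP.

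Next, I would observe that the labeled graph $\G^L(N_0)$ is the empty graph on $n=2p$ vertices (with whatever vertex labeling one chooses, since $N_0$ is diagonal). Consequently, \emph{every} labeled graph $G^L$ on $n$ vertices is obtained from $\G^L(N_0)$ by inserting some (possibly zero) set of edges, so $G^L$ is a supergraph of $\G^L(N_0)$ in the sense required by Theorem \ref{supergraph}.

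Applying the Supergraph Theorem to $N_0$ with target labeled graph $G^L$ then yields a positive definite matrix $N \in \symp(G^L)$ having the same symplectic spectrum $\{d_1,\dots,d_p\}$ and also possessing the SSSP, which is exactly the conclusion of the corollary.

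There is no real obstacle: the work has already been done in Lemma \ref{Diag} (producing a seed matrix with distinct symplectic eigenvalues and the SSSP) and in the Supergraph Theorem (propagating these properties to any supergraph). The only minor point to verify is that inserting zero edges is permitted by the statement of Theorem \ref{supergraph}, so that the case $G^L=\overline{K_n}^L$ itself is covered; this is immediate from the phrasing ``by inserting some edges.''
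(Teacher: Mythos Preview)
Your proposal is correct and follows essentially the same approach as the paper's proof: build $D\oplus D$ for distinct $d_i$, invoke Lemma~\ref{Diag} to get the SSSP on the empty labeled graph, and then apply the Supergraph Theorem~\ref{supergraph} to reach an arbitrary labeled graph $G^L$. The paper's argument is the same two-line combination of these results.
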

\begin{proof}
Let $\lam_1, \ldots, \lam_p$ be distinct positive real numbers, and $D= \diag(\lam_1, \ldots, \lam_p)$.
Then by Lemma \ref{Diag}, $D \oplus D$ is a positive definite matrix with the SSSP whose labeled graph is 
the empty graph on $2p$ vertices.  The result now follows from Theorem \ref{supergraph}.
\end{proof}

The next theorem  is an analog of a main result for the Strong Spectral Property and the IEP-$G$.  Its proof is in Appendix \ref{appendix}.

\begin{thm}\label{Thm:bifurcation}{\rm \bf (Bifurcation Theorem) }
\\
Let $N$ be a $2p \times 2p$ positive definite matrix having the SSSP.  
Then there exists $\epsilon>0$ such that if $\widehat{N} $ is a $2p\times 2p $ positive definite matrix 
with $\| \widehat{N}- N \| <\epsilon$, then there exists a 
matrix  $N'\in \symp(\G^L(N))$ such that $\spspec(N')=\spspec(\wh N)$.
\end{thm}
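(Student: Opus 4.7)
My plan is to prove the Bifurcation Theorem by applying the implicit function theorem (in its submersion form) to the symplectic congruence map. Define
\[
\Theta:\Sp(2p)\times \Span\sym(\G^L(N))\longrightarrow \symm(2p),\qquad \Theta(S,A)=S^\top A S.
\]
Observe $\Theta(I,N)=N$. The tangent space to $\Sp(2p)$ at $I$ is $\spLA(2p)$, and a direct computation gives the derivative of $\Theta$ at $(I,N)$ as
\[
d\Theta\big|_{(I,N)}(X,B)\;=\;X^\top N + NX + B,\qquad X\in\spLA(2p),\ B\in\Span\sym(\G^L(N)).
\]

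The key observation is that surjectivity of $d\Theta|_{(I,N)}$ onto $\symm(2p)$ is exactly the definition of the SSSP for $N$, since the image of $d\Theta|_{(I,N)}$ is
\[
\{NM+M^\top N : M\in\spLA(2p)\}+\Span\sym(\G^L(N)).
\]
Hence, by hypothesis, $\Theta$ is a submersion at the point $(I,N)$. By the submersion theorem (equivalently, choosing a linear complement to $\ker d\Theta|_{(I,N)}$ and applying the standard implicit function theorem), there is an open neighborhood $\mathcal{U}\subseteq\symm(2p)$ of $N$ and smooth maps $\wh N\mapsto S(\wh N)\in\Sp(2p)$ and $\wh N\mapsto N'(\wh N)\in \Span\sym(\G^L(N))$ such that $S(N)=I$, $N'(N)=N$, and $\Theta(S(\wh N),N'(\wh N))=\wh N$ for every $\wh N\in\mathcal{U}$.

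For each such $\wh N$ we then have $\wh N = S(\wh N)^\top N'(\wh N)\, S(\wh N)$ with $S(\wh N)$ symplectic, so Williamson's theorem (applied through symplectic congruence) yields $\spspec(N'(\wh N))=\spspec(\wh N)$. It remains to verify $N'(\wh N)\in\symp(\G^L(N))$. Positive definiteness of $N'(\wh N)$ is automatic for $\wh N$ close to $N$, because positive definiteness is an open condition on $\symm(2p)$. Moreover, by construction $N'(\wh N)$ lies in $\Span\sym(\G^L(N))$, so only the entries indexed by diagonals and edges of $\G^L(N)$ are potentially nonzero; since each such entry is nonzero for $N$ and varies continuously with $\wh N$, shrinking $\mathcal{U}$ further if necessary we ensure those entries remain nonzero, giving $\G^L(N'(\wh N))=\G^L(N)$. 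Choosing $\epsilon>0$ small enough that the open ball of radius $\epsilon$ about $N$ lies in this final neighborhood, intersected with the positive definite cone, produces the required $N'$.

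The main technical point is the translation of the SSSP into the transversality/surjectivity hypothesis of the submersion theorem; this is the content of Lemma \ref{lem:orthocomp} and Theorem \ref{SSSPequiv}, which already establish that SSSP is a nondegeneracy condition dual to the one needed here. Once that identification is made, the rest is a routine application of the implicit function theorem together with openness of positive definiteness and of the condition of having nonzero entries in fixed positions.
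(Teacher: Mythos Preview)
Your proof is correct and takes essentially the same approach as the paper: both exploit that the SSSP is precisely the surjectivity of the derivative of a symplectic-congruence map at the base point, then invoke the submersion/inverse function theorem and the openness of positive definiteness and of the nonzero-entry conditions. The only cosmetic difference is that you work directly on the manifold $\Sp(2p)\times\Span\sym(\G^L(N))$ with $\Theta(S,A)=S^\top A S$, whereas the paper passes to the exponential chart and uses $g(S,B)=e^{S^\top}(N-B)e^S$ on the vector space $\spLA(2p)\times\Span\sym(\G^L(N))$; the linearizations coincide and the conclusions are identical.
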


\begin{cor}\label{c:bifur-close}
Let $N$ be a  positive definite matrix $2p\times 2p $ with the SSSP and $\spspec(N)=\{\lam_1,\dots,\lam_p\}$. 
Then there exists $\delta>0$   {with the property that}   for every list $\alpha=\{\hat\lam_1,\dots,\hat\lam_p\}$ of $p$ positive real numbers such that $|\hat\lam_i-\lam_i|<\delta$ for $i=1,\dots,p$,  there is a matrix $N'\in \symp(\G^L(N))$ such that $\spspec(N')=\alpha$.
\end{cor}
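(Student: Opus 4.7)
The plan is to reduce the statement directly to the Bifurcation Theorem (Theorem \ref{Thm:bifurcation}) by constructing, for each prescribed list $\alpha$, an auxiliary positive definite matrix $\widehat{N}$ that is close to $N$ in norm and has symplectic spectrum exactly $\alpha$. Once such a $\widehat{N}$ is in hand, Theorem \ref{Thm:bifurcation} supplies an $N' \in \symp(\G^L(N))$ with $\spspec(N') = \spspec(\widehat{N}) = \alpha$.

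First I would apply Williamson's Theorem to $N$: there exists a symplectic matrix $S$ and $D = \diag(\lambda_1, \ldots, \lambda_p)$ with $S^\top N S = D \oplus D$. Given a candidate list $\alpha = \{\hat\lambda_1, \ldots, \hat\lambda_p\}$ of positive reals, set $\widehat{D} = \diag(\hat\lambda_1, \ldots, \hat\lambda_p)$ and define
\[
\widehat{N} = (S^\top)^{-1} (\widehat{D} \oplus \widehat{D}) S^{-1}.
\]
Since $\widehat{D} \oplus \widehat{D}$ is positive definite (provided all $\hat\lambda_i > 0$) and congruence preserves positive definiteness, $\widehat{N}$ is positive definite. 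Because $S$ is symplectic and $S^\top \widehat{N} S = \widehat{D} \oplus \widehat{D}$, Williamson's Theorem (and the uniqueness of the symplectic spectrum) gives $\spspec(\widehat{N}) = \alpha$.

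Next I would invoke the continuity of the map $\widehat{D} \mapsto \widehat{N}$, which is a fixed linear function of the entries of $\widehat{D}$ with $S$ held constant. Let $\epsilon > 0$ be the quantity supplied by Theorem \ref{Thm:bifurcation} for the matrix $N$. Since $\widehat{N} \to N$ as $\hat\lambda_i \to \lambda_i$ for all $i$, there exists $\delta_0 > 0$ such that whenever $|\hat\lambda_i - \lambda_i| < \delta_0$ for every $i$, we have $\|\widehat{N} - N\| < \epsilon$. Taking $\delta = \min\{\delta_0, \tfrac12 \min_i \lambda_i\}$ ensures additionally that each $\hat\lambda_i > 0$, so the construction of $\widehat{N}$ as a positive definite matrix is valid.

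Finally, with this $\delta$, given any admissible list $\alpha$, the constructed $\widehat{N}$ satisfies $\|\widehat{N} - N\| < \epsilon$ and $\spspec(\widehat{N}) = \alpha$, so Theorem \ref{Thm:bifurcation} yields $N' \in \symp(\G^L(N))$ with $\spspec(N') = \alpha$, completing the proof. There is really no substantive obstacle here; the only point demanding a bit of care is the choice of $\widehat{N}$ realizing the target spectrum while being controllably close to $N$, and the Williamson congruence of $N$ itself makes that choice canonical.
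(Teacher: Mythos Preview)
Your proof is correct and follows essentially the same route as the paper: apply Williamson's Theorem to obtain $S$ with $S^\top N S = D\oplus D$, set $\widehat N=(S^\top)^{-1}(\widehat D\oplus\widehat D)S^{-1}$, and feed this into the Bifurcation Theorem. The paper differs only cosmetically, giving the explicit choice $\delta=\epsilon/\|S^{-1}\|^2$ via a direct spectral-norm estimate rather than your abstract continuity argument, and it omits your positivity safeguard since $\alpha$ is already assumed to consist of positive reals.
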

\bpf We use the spectral norm.
Let $S$ be a symplectic matrix such that $S\trans NS=D\oplus D$ where $D=\diag(\lam_1,\dots,\lam_p)$.
Use Theorem \ref{Thm:bifurcation} to choose $\epsilon>0$  such that if $\widehat{N} $ is a $2p\times 2p $ positive definite matrix 
with $\| \widehat{N}- N \| <\epsilon$, then there exists a matrix  $N'\in\symp(\G^L (N))$ such that
 $\spspec(N')=\spspec(\widehat{N})$. Define $\delta=\frac\epsilon{\|S^{-1}\|^2}$. Let $\alpha=\{\hat\lam_1,\dots,\hat\lam_p\}$ be such that $|\hat\lam_i-\lam_i|<\delta$ for $i=1,\dots,p$.  Define $\wh D=\diag(\hat \lam_1,\dots,\hat \lam_p)$ and $\wh N=((S^{-1})\trans(\wh D\oplus \wh D)S^{-1}$ and observe that $\spspec(\wh N)=\alpha$.  Then 
\begin{align}\nonumber
\|\wh N-N\|&=\|((S^{-1})\trans((\wh D-D)\oplus (\wh D-D)) S^{-1}\|\\ 
\nonumber
&\le \|(S^{-1})\trans \| \|(\wh D-D)\oplus (\wh D-D)\| \|S^{-1}\|< \| S^{-1}\|^2\delta=\epsilon.
\end{align} 
Thus by Theorem \ref{Thm:bifurcation}, there exists a positive definite  matrix $N'$  with $\G^L(N')=\G^L(N)$ and $\spspec(N')=\spspec(\widehat{N})=\alpha$. \epf

  Given a positive definite matrix $N$ we list its distinct symplectic eigenvalues in increasing order 
$\lambda_1<  \lambda_2< \cdots< \lambda_q$.
  The \textit{ordered symplectic multiplicity} list of $N$ 
is  $\soml(N)=(m_1, \ldots, m_q)$  where $m_i$ is the multiplicity of $\lambda_i$
as a symplectic eigenvalue of $N$.  A \emph{refinement} of $\soml(N)$ is a list obtained from $\soml$ by replacing
each $m_i$ with a partition of $m_i$. 
 The next result is immediate from Corollary \ref{c:bifur-close}.

\begin{cor}\label{cor:bifurcationSMP}{\bf (Bifurcation Corollary for Multiplicity Lists)}
\phantom{ } \\
    Let $N$ be a positive definite matrix with the SSSP.  Then for each refinement $\soml'$ of $ \soml(N)$, there is a positive definite matrix $N'$ with the SSSP, { $\G^L(N')=\G^L(N)$,}  and $\soml({N'}) = \oml'$. 
In particular,    if all the symplectic eigenvalues of $N$ are equal, then {$\G^L(N)$}  allows every possible ordered symplectic multiplicity list. 
\end{cor}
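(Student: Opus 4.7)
The plan is to apply Corollary \ref{c:bifur-close} to a carefully chosen perturbation of the spectrum of $N$ that realizes the refinement $\oml'$. Write $\spspec(N)$ with its distinct values as $\lam_1 < \cdots < \lam_q$ and multiplicities $m_1,\dots,m_q$, so $\soml(N)=(m_1,\dots,m_q)$. A refinement $\oml'$ is obtained by replacing each $m_i$ by an ordered partition $(m_{i,1},\dots,m_{i,k_i})$. The strategy is to pick, inside a tiny neighborhood of each $\lam_i$, $k_i$ distinct positive numbers $\hat\lam_{i,1} < \cdots < \hat\lam_{i,k_i}$ and declare that $\hat\lam_{i,j}$ should have multiplicity $m_{i,j}$; this prescribes a multiset $\alpha$ of $p$ positive reals whose ordered multiplicity list is exactly $\oml'$, and which is within an arbitrarily small coordinatewise distance of the spectrum of $N$ (listed with multiplicity).

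More precisely, let $\delta>0$ be the constant furnished by Corollary \ref{c:bifur-close} for $N$. Choose the $\hat\lam_{i,j}$ so that $|\hat\lam_{i,j}-\lam_i|<\delta$ for all $i,j$; this is possible because only finitely many values are being placed in each neighborhood. Form $\alpha$ by taking $m_{i,j}$ copies of $\hat\lam_{i,j}$ for all $i,j$, arranged to match the multiplicity list of $N$ position by position. Corollary \ref{c:bifur-close} then produces $N'\in\symp(\G^L(N))$ with $\spspec(N')=\alpha$, and by construction the ordered list of multiplicities of the distinct symplectic eigenvalues of $N'$ is precisely $\oml'$.

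The one subtle point is the claim that $N'$ again has the SSSP. Since $\hat\lam_{i,j}$ may be chosen arbitrarily close to $\lam_i$, the witness matrix $\wh N$ used in the proof of Corollary \ref{c:bifur-close} is arbitrarily close to $N$, and the Bifurcation Theorem \ref{Thm:bifurcation} (via the implicit construction in its proof) produces $N'$ close to $\wh N$, hence close to $N$. By Corollary \ref{open}, the SSSP is an open condition on $\symm(2p)$, so for a sufficiently small choice of the $\hat\lam_{i,j}$ the resulting $N'$ still has the SSSP. Thus $\delta$ should be shrunk at the outset so that both the conclusion of Corollary \ref{c:bifur-close} and the SSSP‑openness neighborhood of Corollary \ref{open} apply.

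For the final sentence, if all symplectic eigenvalues of $N$ are equal then $\soml(N)=(p)$, and every ordered partition of $p$ is a refinement of $(p)$; the previous argument therefore realizes an arbitrary ordered symplectic multiplicity list on $\G^L(N)$. The main obstacle in writing this cleanly is not the combinatorial perturbation — that part is straightforward — but packaging the SSSP inheritance: one must be careful that the matrix produced by Corollary \ref{c:bifur-close} actually lies in the open SSSP neighborhood, and this is where we lean on (and should cite) the fact that the bifurcation construction is continuous at $N$.
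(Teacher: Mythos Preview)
Your approach is essentially the same as the paper's: both derive the corollary from Corollary~\ref{c:bifur-close} by perturbing each distinct symplectic eigenvalue into the desired cluster of nearby values. The paper states only that the result is ``immediate from Corollary~\ref{c:bifur-close}.''

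You are actually more careful than the paper on one point: the statement asserts that $N'$ has the SSSP, but Corollary~\ref{c:bifur-close} as stated does not guarantee this. You correctly observe that one must invoke Corollary~\ref{open} together with the fact that the $N'$ produced in the proof of Theorem~\ref{Thm:bifurcation} is $N-B$ with $B$ small (coming from a local section of the submersion $g$), so that $N'\to N$ as the perturbation shrinks. This extra step is genuine and your argument for it is sound. One minor omission: to ensure the \emph{ordered} multiplicity list of $N'$ is exactly $\oml'$, you should also take $\delta$ smaller than half the minimum gap $\min_i(\lam_{i+1}-\lam_i)$, so that the clusters around distinct $\lam_i$ do not interleave.
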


The next result can be used to  solve  certain ISEP-$G$ problems even when a given matrix 
does not have the SSSP. 
A few definitions are needed.
Let $N$ be a $2p \times 2p$ positive definite matrix
with labeled graph $G^L$.  Then $N$ defines a subspace
\begin{equation}
\label{eq:tanf}
\{ NM + M^\top N : M \in \spLA(2p) \}. 
\end{equation} 
Let $R=[r_{ij}]$ be a matrix in the subspace defined in \eqref{eq:tanf}.
The \textit{labeled graph of $G^L$ in the direction of $R$} 
is denoted by $G^L_R$ and is obtained from $G^L$ by inserting 
an edge joining $i$ and $j$, if not already present, whenever
$r_{ij} \neq 0$ and $i\neq j$.
We say that \textit{$N$ has the SSSP
with respect to $G^L_R$} provided 
\[ 
 \{NM+ M^\top N\colon M \in \spLA(2p) 
\} + \{ B \in \Span (\mathcal{S}(G^L_R))\}= \symm(n).
\]
This is equivalent to $Y=O$ is the only symmetric matrix such that $N \circ Y=O$, 
$R \circ Y=O$ and $\Omega NY=YN\Omega$.

\begin{thm}\label{liberation}{(\bf Matrix Liberation Lemma)}
\phantom{ } \\
Let $N$ be an $n\times n$ positive definite matrix with labeled graph $G^L$, $R$ be a matrix in {the subspace in} $(\ref{eq:tanf})$ 
and  $G^L_R$ be the labeled graph of {$G^L$} in the direction of $R$.
If $N$ has the SSSP with respect to $G^L_R$, then there is a positive 
definite matrix  with labeled graph  $G^L_R$ that has the same  symplectic eigenvalues as $N$ and has the SSSP. 
\end{thm}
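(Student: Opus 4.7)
The plan is to perturb $N$ along its symplectic orbit $\mathcal{O} = \{S^\top N S : S \in \Sp(2p)\}$ in such a way that (a) the initial velocity is exactly $R$, so that the new edges of $G^L_R$ light up, and (b) the perturbed matrix stays inside the subspace $\Span \sym(G^L_R)$, so that no edges outside $G^L_R$ are created. Preservation of the symplectic spectrum is automatic since symplectic congruence preserves it, and the SSSP of the resulting matrix will follow from the openness of the verification rank condition in Theorem~\ref{t:verification}.

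Concretely, since $R\in\{NM+M^\top N : M\in\spLA(2p)\}$, choose $M_0\in\spLA(2p)$ with $R=NM_0+M_0^\top N$. Define the smooth map $\Psi:\spLA(2p)\to\symm(2p)$ by $\Psi(M)=(e^M)^\top N e^M$; every value of $\Psi$ lies in $\mathcal{O}$. Let $W$ be the subspace of $\symm(2p)$ consisting of symmetric matrices with zeros on the diagonal and on the edges of $G^L_R$, so that $\symm(2p)=\Span\sym(G^L_R)\oplus W$, and let $\pi:\symm(2p)\to W$ be the projection along $\Span\sym(G^L_R)$. Consider $F=\pi\circ\Psi:\spLA(2p)\to W$. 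Because $N\in\symp(G^L)\subseteq \Span\sym(G^L_R)$, we have $F(0)=\pi(N)=0$, and $DF(0)(M)=\pi(NM+M^\top N)$. The SSSP of $N$ with respect to $G^L_R$ says exactly that $\{NM+M^\top N\}+\Span\sym(G^L_R)=\symm(2p)$, i.e.\ $\pi$ maps $\{NM+M^\top N:M\in\spLA(2p)\}$ onto $W$, so $DF(0)$ is surjective.

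By the submersion form of the implicit function theorem, $F^{-1}(0)$ is a smooth submanifold near $0$ with tangent space $\ker DF(0)=\{M:NM+M^\top N\in \Span\sym(G^L_R)\}$. Since every off-diagonal nonzero entry of $R$ corresponds, by the very definition of $G^L_R$, to an edge of $G^L_R$, we have $R\in \Span\sym(G^L_R)$, and hence $M_0\in\ker DF(0)$. Therefore one can choose a smooth curve $M(s)$ in $F^{-1}(0)$ with $M(0)=0$ and $M'(0)=M_0$, and set $N(s)=\Psi(M(s))=(e^{M(s)})^\top N e^{M(s)}$. By construction $N(s)$ is positive definite, lies on the orbit $\mathcal{O}$ (hence has the same symplectic spectrum as $N$), satisfies $\pi(N(s))=0$ (so $N(s)\in\Span\sym(G^L_R)$), and has $\dot N(0)=NM_0+M_0^\top N=R$, so $N(s)=N+sR+O(s^2)$.

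It then remains to collect the conclusions for sufficiently small $s>0$. Every edge of $G^L$ persists because the corresponding entry of $N$ is nonzero and varies continuously; every edge in $G^L_R\setminus G^L$ appears because $N(s)_{ij}\approx sR_{ij}\neq 0$; no other off-diagonal nonzero entries can occur because $N(s)\in\Span\sym(G^L_R)$. Hence $\G^L(N(s))=G^L_R$. Finally, having graph $G^L_R$, the SSSP for $N(s)$ is equivalent to $\Xi(N(s))$ having full row rank by Theorem~\ref{t:verification}; this rank condition holds at $s=0$ by the SSSP-with-respect-to-$G^L_R$ hypothesis on $N$, and persists for small $s$ by continuity, so $N'=N(s)$ is the required matrix. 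The main obstacle is the bookkeeping in the transversality/submersion step: one must verify carefully that $M_0$ actually lies in $\ker DF(0)$ and that ``staying in $\Span\sym(G^L_R)$'' rules out all unwanted edges, both of which reduce to the defining property of $G^L_R$ that its new edges are exactly the off-diagonal support of $R$.
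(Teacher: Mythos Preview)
Your argument is correct and follows essentially the same route as the paper: both perturb $N$ along its symplectic orbit via $M\mapsto (e^M)^\top N e^M$ and use the SSSP-with-respect-to-$G^L_R$ hypothesis as the transversality condition that lets one stay inside $\Span\sym(G^L_R)$ while moving in the direction $R$. The only difference is packaging: the paper invokes an external black-box result (Theorem~\ref{IVTLiberate} from~\cite{CS}) that handles the support bookkeeping abstractly, whereas you unpack that step yourself via the submersion theorem and an explicit curve $M(s)$ with $M'(0)=M_0$, which makes your version self-contained and transparent about why the new edges appear (from the first-order term $sR$) and why no extra ones can (from $\pi(N(s))=0$).
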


\begin{ex}
Here we illustrate the use of the Matrix Liberation Lemma. 
Let 
\[ 
A=\begin{bmatrix}
\begin{array}{rrr}
2 & -1 & 0 \\
-1 & 2 & -1 \\
0 & -1 & 2
\end{array}
\end{bmatrix} ,  
 B= \begin{bmatrix} 
 \begin{array}{rrr}
2 & 1 & 0 \\
1 & 3 & 1 \\
0 & 1 & 2
\end{array}
\end{bmatrix}
\mbox{ and } 
N= A \oplus B. 
\] 
Then  $\det(xI-\Omega N)=x^2 I + AB$ and
\[
AB=
\begin{bmatrix}
 \begin{array}{rrr}
3 & -1 & -1 \\
0 & 4 & 0\\
-1 & -1 & 3
\end{array} 
\end{bmatrix}
\] 
 has eigenvalues  $2, 4,4$.
It follows that the symplectic eigenvalues of $N$  are  $\sqrt{2}$, $2$ and $2$.

{Observe that \[Y=
\begin{bmatrix}
\begin{array}{rrrrrr}0 & 0 & 0 & -1 & 2 & -1 \\
 0 & 0 & 0 & -2 & 0 & 2 \\
 0 & 0 & 0 & 1 & -2 & 1 \\
 -1 & -2 & 1 & 0 & 0 & 0 \\
 2 & 0 & -2 & 0 & 0 & 0 \\
 -1 & 2 & 1 & 0 & 0 & 0 \end{array} \end{bmatrix}\] shows that $N$ does not have the SSSP.}

Let 
\[ 
M=\frac{1}{3}
\begin{bmatrix}
\begin{array}{rrrrrr}
0 & 0 & 0 & 0 & {5} & {4} \\
0 & 0 & 0 & {5} & {5} & {2} \\
0 & 0 & 0 & {4} & {2} & 0 \\
{4} & -3 & 0 & 0 & 0 & 0 \\
-3 & 0 & 0 & 0 & 0 & 0 \\
0 & 0 & {1} & 0 & 0 & 0
\end{array}\end{bmatrix}.
\] 
Then $M\in \spLA(6)$, 
and 
\[
R:=M\trans N+NM =
\begin{bmatrix}
    0 & 0 & 0 & 0 & 0 & 1\\
    0 & 0 & 0 & 0 & 0 & 0 \\
    0 & 0 & 0 & 1 & 0 & 0\\
    0 & 0 & 1 & 0 & 0 & 0 \\
    0 & 0 & 0 & 0 & 0 &0 \\
    1 & 0 & 0 & 0 & 0 &0
\end{bmatrix}. 
\]
To show that $N$ has the SSSP with respect to $G^L_R$,
let   $Y$ be a matrix of the form 
\[ 
\begin{bmatrix} 
\begin{array}{rrrrrr}
0 & 0 & a & b & c & 0 \\
0 & 0 & 0 & d & e & f \\
a & 0 & 0 & 0 & g & h \\
b & d & 0 & 0 & 0 & i \\
c & e & g & 0 & 0 & 0 \\
0 & f & h & i & 0 & 0
\end{array}\end{bmatrix} 
\] 
with $\Omega NY=YN\Omega$. 
Then 

\begin{center}
\resizebox{\linewidth}{!}
{$
\begin{bmatrix}\begin{array}{cccccc}
4 \, b + 2 \, c & b + 3 \, c + 2 \, d + e & c + g & 0 & a & -2 \, a + 2 \, i \\
b + 3 \, c + 2 \, d + e & 2 \, d + 6 \, e + 2 \, f & e + 2 \, f + 3 \, g + h & i & 0 & i \\
c + g & e + 2 \, f + 3 \, g + h & 2 \, g + 4 \, h & -2 \, a + 2 \, i & a & 0 \\
0 & i & -2 \, a + 2 \, i & -4 \, b + 2 \, d & b - 2 \, c - 2 \, d + e & d + f \\
a & 0 & a & b - 2 \, c - 2 \, d + e & 2 \, c - 4 \, e + 2 \, g & e - 2 \, f - 2 \, g + h \\
-2 \, a + 2 \, i & i & 0 & d + f & e - 2 \, f - 2 \, g + h & 2 \, f - 4 \, h
\end{array}\end{bmatrix}
$}
\end{center}
equals $O$. It can be verified that this occurs if and only if $Y=O$.  Hence $N$ has the SSSP with respect to $G^L_R$. 
The Matrix Liberation Lemma now implies that there is a matrix having the SSSP 
whose labeled graph is that of $N$ in the direction of $R$ 
with the same symplectic spectrum as $N$.  
Hence the 6-cycle with  the cyclic order labeling 
allows a matrix having the SSSP with a multiple symplectic eigenvalue. 
\end{ex}


\section{Symplectic spectrally arbitrary labeled graphs}\label{s:ssa}

A labeled graph $G^L$ of order $n=2p$  is \emph{symplectic spectrally arbitrary} if for any $p$ positive numbers $\lam_1,\dots,\lam_p$, there is a   matrix $N\in\symp(G^L)$ such that $\lam_1,\dots,\lam_p$ are the symplectic eigenvalues of $N$.

In this section, we use the construction $N=S\trans D'S$ where  $S$ is a  symplectic matrix  and  $D'=D\oplus D$ is a diagonal matrix
with a desired symplectic spectrum  to construct  a variety of labeled graphs that are symplectic spectrally arbitrary. In particular, we  show that the sparse labeled graph  $\tp_n^I$ is symplectic spectrally arbitrary 
for each positive even integer $n=2p$. 
Additionally, we show that a  variety of dense labeled graphs, including  $K_{n}$, 
are symplectic spectrally arbitrary.

We begin with a result about nonnegative symplectic matrices that shows the existence of many symplectic spectrally arbitrary labeled graphs.

\begin{prop}\label{l:nonnegsymplectic}
 Let $S$ be a  nonnegative symplectic  matrix  and let $G^L=\GL L(S^\top S)$.
  Then $G^L$ is symplectic spectrally arbitrary.
\end{prop}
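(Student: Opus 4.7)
The plan is to write down an explicit $N$ with the desired symplectic spectrum and then verify that its labeled graph coincides with $G^L = \GL L(S^\top S)$. Given any target positive reals $\lambda_1, \dots, \lambda_p$, set $D = \diag(\lambda_1, \dots, \lambda_p)$ and define
\[
N := S^\top (D \oplus D)\, S.
\]
Since $S$ is symplectic, it is invertible, so $N$ is congruent to $D \oplus D$ and hence positive definite. For the symplectic spectrum, I would use that $S^{-1}$ is again symplectic (the symplectic group is closed under inverses), so $(S^{-1})^\top N (S^{-1}) = D \oplus D$ is itself a Williamson decomposition of $N$; hence $\spspec(N) = \{\lambda_1, \dots, \lambda_p\}$ by the uniqueness part of Williamson's theorem.

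The content of the proof lies in matching labeled graphs. The key trick is to factor $N = T^\top T$ with $T := (D \oplus D)^{1/2} S$. Because $(D \oplus D)^{1/2}$ is a positive diagonal matrix and $S$ is entrywise nonnegative, $T$ is entrywise nonnegative, and moreover $t_{ki} \ne 0$ if and only if $s_{ki} \ne 0$ (left multiplication by a positive diagonal matrix rescales each row of $S$ by a positive constant). Consequently, for $i \ne j$,
\[
n_{ij} = \sum_{k} t_{ki} t_{kj}
\]
is a sum of nonnegative terms, and is therefore nonzero if and only if $t_{ki} t_{kj} \ne 0$ for some $k$, equivalently $s_{ki} s_{kj} \ne 0$ for some $k$, equivalently $(S^\top S)_{ij} \ne 0$. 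Thus the off-diagonal zero/nonzero pattern of $N$ agrees with that of $S^\top S$; since $N$ is positive definite its diagonal entries are nonzero; and therefore $\GL L(N) = \GL L(S^\top S) = G^L$, so $N \in \symp(G^L)$.

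There is no real obstacle: the argument reduces to the elementary but essential observation that nonnegativity prevents cancellation in a product of the form $T^\top T$, so rescaling the rows of $S$ by positive constants can neither create nor destroy nonzero entries in $S^\top S$. Everything else follows from the definitions and a direct application of Williamson's theorem.
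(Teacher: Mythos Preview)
Your proof is correct and essentially identical to the paper's: both construct $N = S^\top (D \oplus D) S$ and use nonnegativity of $S$ to prevent cancellation in the sum $n_{ij} = \sum_k s_{ki} d'_{kk} s_{kj}$, concluding that the zero pattern of $N$ matches that of $S^\top S$. Your introduction of $T = (D \oplus D)^{1/2} S$ is just a convenient repackaging of this same computation, and your explicit invocation of Williamson's theorem to identify the symplectic spectrum spells out a step the paper leaves implicit.
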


\begin{proof} Assume $S$ is $(2p)\x(2p)$, let $n=2p$, let $D$ be a $p\x p$ diagonal matrix with positive diagonal entries,  let $D'=D\oplus D$, and define $N=S^\top D'S$. Then  the  symplectic eigenvalues of $N=[n_{ij}]$ are the diagonal entries of $D$. Note that \[ 
n_{ij}=\sum_{r=1}^n s_{r i}d'_{rr}s_{r j}
\]
Since all entries of $S$ are nonnegative and all entries $d'_{rr}$ are positive, $n_{ij}\neq0$ if and only if there exists $1\leq r\leq n$ such that $s_{r i}s_{r j}\neq0$. This is equivalent to $(S^\top S)_{ij}=\sum_{r=1}^n s_{r i}s_{r j}\neq0$. Hence, $N\in \symp(G^L)$ and $G^L$ is symplectic spectrally arbitrary.
\end{proof}

\begin{rem}\label{manySSA}
Proposition \ref{l:nonnegsymplectic} together with the indicated prior result shows that   each of the following  labeled graphs is symplectic spectrally arbitrary for every positive integer $p$: $(\OL{K_p}\vee K_p)^{M}$ (Example \ref{pK1veeKp}) and 
    $(\tp_{2p})^I$ (Corollary \ref{tp_is_sympd}).  
\end{rem}

{We can generalize the method in Proposition \ref{p:IBOBsympPD}. 

\begin{prop}\label{IBOIisSSA} Let $B=[b_{ij}]$ be a symmetric $p\times p$ matrix,  $S=\mtx{I &  B\\O & I}$,   $N=S\trans S$ and  $G^L=\G^L(N)$.  Then $G^L$ is symplectic spectrally arbitrary.
\end{prop}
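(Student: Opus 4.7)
The plan is to produce, for each prescribed positive tuple $\lam_1,\dots,\lam_p$, an explicit matrix $N'\in\symp(G^L)$ whose symplectic spectrum is exactly $\{\lam_1,\dots,\lam_p\}$. The construction mimics the proof of Proposition~\ref{l:nonnegsymplectic}, but with one critical twist: instead of conjugating $D\oplus D$ by $S$, I would conjugate $D' := I_p\oplus D_2$ where $D_2 = \diag(\lam_1^2,\dots,\lam_p^2)$.

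First I would define $N' := S\trans D' S$. Since $D'$ is positive definite and $S$ is invertible (being symplectic), $N'$ is positive definite. Because $S$ is symplectic, a brief argument using Williamson's Theorem together with closure of $\Sp(2p)$ under products shows that $\spspec(N')=\spspec(D')$. By Lemma~\ref{Diag} (applied with first summand $I_p$ and second summand $D_2$), $\spspec(D')=\{\sqrt{1\cdot \lam_i^2}\colon i=1,\dots,p\}=\{\lam_1,\dots,\lam_p\}$, as required.

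Next I would verify $\G^L(N')=G^L$ by direct computation. Using $S\trans=\mtx{I & 0 \\ B & I}$ (since $B$ is symmetric), block multiplication gives
\[
N' = \mtx{I & 0 \\ B & I}\mtx{I & 0 \\ 0 & D_2}\mtx{I & B \\ 0 & I} = \mtx{I & B \\ B & B^2+D_2},
\]
which differs from $N=S\trans S=\mtx{I & B \\ B & I+B^2}$ only along the diagonal of the $(2,2)$-block. Because $I$ and $D_2$ are both diagonal, the off-diagonal entries of the $(2,2)$-blocks of $N$ and $N'$ coincide entry by entry (both equal $(B^2)_{ij}$), and the new diagonal entries $(B^2)_{ii}+\lam_i^2$ remain strictly positive. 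Consequently $N$ and $N'$ have identical zero/nonzero patterns, so $\G^L(N')=\G^L(N)=G^L$, finishing the proof.

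No substantive obstacle arises; the only subtle point is why $D'$ must be $I_p\oplus D_2$ rather than the more natural $D\oplus D$ with $D=\diag(\lam_i)$. With that naive choice the $(2,2)$-block of $S\trans(D\oplus D)S$ is $BDB+D$, whose off-diagonal entry $\sum_k b_{ik}\lam_k b_{kj}$ can fail to vanish even when $(B^2)_{ij}=0$, since sign cancellations in the unweighted sum $\sum_k b_{ik}b_{kj}$ are generally destroyed by arbitrary positive reweighting. Keeping the top-left block of $D'$ equal to $I_p$ leaves the weights uniform in the bottom-right block and preserves every cancellation in $B^2$, which is precisely what forces $\G^L(N')=G^L$ and allows the proposition to hold for every symmetric $B$, not merely for nonnegative ones.
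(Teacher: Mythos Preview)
Your proof is correct and takes a genuinely different route from the paper's. The paper keeps the diagonal target in Williamson normal form $D\oplus D$ with $D=\diag(\lam_1,\dots,\lam_p)$, but to preserve the zero pattern it must modify the symplectic matrix, replacing $B$ by $\wt B=\sqrt D^{-1}B\sqrt D^{-1}$ and setting $\wt N=\wt S\trans(D\oplus D)\wt S$; the resulting $(2,2)$-block is $D+\sqrt D^{-1}B^2\sqrt D^{-1}$, whose off-diagonal entries are nonzero scalar multiples of those of $B^2$. You instead leave $S$ untouched and absorb the asymmetry into the diagonal target $D'=I_p\oplus\diag(\lam_1^2,\dots,\lam_p^2)$, invoking Lemma~\ref{Diag} to read off the symplectic spectrum. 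Your computation is shorter and the pattern preservation is immediate, since the $(2,2)$-block $B^2+D_2$ differs from $I+B^2$ only on the diagonal. The paper's version has the minor aesthetic point that its target is already in Williamson form, but that plays no role in the argument; your closing remark correctly identifies why the na\"{\i}ve choice $S\trans(D\oplus D)S$ fails, which is exactly the obstruction the paper's rescaling of $B$ is designed to circumvent.
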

\bpf By Proposition \ref{p:IBOBsympPD}, $N=\mtx{I & B \\B & I+B^2}$. For {positive real numbers} $\lam_1,\dots,\lam_p$,  define $D=\diag(\lam_1,\dots,\lam_p)$ and $D'=D\oplus D$.  Use $B$ and $D$ to define 
    $\wt B=\sqrt D^{-1}B \sqrt D^{-1}$,   $\wt S=\mtx{I & \wt B\\O & I}$, and $\wt N=\wt S\trans D' \wt S$.  Then $\wt S$ is symplectic and  $\spspec({\wt N})=\{\lam_1,\dots,\lam_p\}$.  Furthermore,
\[ 
\wt N=\wt S\trans D' \wt S=\mtx{D & D\wt B \\
       \wt B D &  D+\wt B D\wt B }=
       \mtx{D & \sqrt D B \sqrt D^{-1}\\
       \sqrt D^{-1} B\sqrt D & D+\sqrt D^{-1}B^2 \sqrt D^{-1}}\in \symp(G^L). \qedhere
\] 
\epf
The next result is immediate from Proposition \ref{IBOIisSSA} and Example \ref{KppSympPD}.
\begin{cor}\label{KppSSA}
The labeled graph $K_{p,p}^M$ is  symplectic spectrally arbitrary.
 \end{cor}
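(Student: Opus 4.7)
The plan is to deduce this directly from the two cited results by exhibiting the ``right'' symmetric matrix $B$ to feed into Proposition~\ref{IBOIisSSA}. Concretely, following Example~\ref{KppSympPD}, I would choose $B$ to be a symmetric orthogonal $p \times p$ matrix every entry of which is nonzero (for instance, the Householder reflector $I - \tfrac{2}{\bv^\top \bv}\bv\bv^\top$ associated with any $\bv \in \R^p$ having all entries nonzero and $v_i \neq \pm \tfrac{1}{\sqrt{2}}\|\bv\|$ to keep the diagonal of $B$ nonzero).

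With this choice in hand, I would form $S = \mtx{I & B \\ O & I}$ and $N = S^\top S$. Because $B$ is symmetric and orthogonal, $B^2 = I$, and Proposition~\ref{p:IBOBsympPD} gives
\[
N \;=\; \mtx{I & B \\ B & I + B^2} \;=\; \mtx{I & B \\ B & 2I}.
\]
The next step is to identify $\G^L(N)$: the two diagonal blocks are scalar multiples of $I_p$, so there are no edges inside $\{1,\dots,p\}$ or inside $\{p+1,\dots,2p\}$; the off-diagonal block $B$ is entrywise nonzero, so every pair $\{i,\, p+j\}$ with $1 \le i,j \le p$ is an edge. This is exactly $K_{p,p}^M$ in the standard labeling, whose partite sets are $\{1,\dots,p\}$ and $\{p+1,\dots,2p\}$ and which of course contains the perfect matching $\{i(p+i) : i = 1,\dots,p\}$.

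Finally, I would apply Proposition~\ref{IBOIisSSA} to the matrix $N$ just constructed to conclude that $\G^L(N) = K_{p,p}^M$ is symplectic spectrally arbitrary. There is no real obstacle: the substantive work — the $\sqrt{D}$-conjugation trick that rescales the symplectic spectrum of $S^\top S$ to an arbitrary positive $p$-tuple while preserving the labeled graph — is already carried out in the proof of Proposition~\ref{IBOIisSSA}, and the existence of the desired $B$ is arranged by Example~\ref{KppSympPD}. The only thing to watch is picking the vector $\bv$ defining the Householder matrix so that $B$ truly has \emph{all} entries (including the diagonal) nonzero, since nonzero diagonal entries of $B$ are what guarantee the matching edges $\{i, p+i\}$ of $K_{p,p}^M$ are present in $\G^L(N)$.
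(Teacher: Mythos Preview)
Your proposal is correct and follows precisely the paper's approach: the paper states that the result is immediate from Proposition~\ref{IBOIisSSA} and Example~\ref{KppSympPD}, which is exactly the combination you use. Your extra care in choosing $\bv$ so that the diagonal entries of the Householder matrix $B$ are nonzero is a nice refinement of a point the paper's Example~\ref{KppSympPD} leaves implicit.
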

}

The complete graph is also symplectic spectrally arbitrary, as seen in the next example.

\begin{ex}\label{random}
Given an arbitrary multi-set $\{\lam_1,\dots,\lam_p\}$ of positive real numbers, we create a positive definite matrix $N$ with this set of symplectic eigenvalues
whose labeled graph is either $ K_{2p}$ or 
$(K_{p}\du K_p)^I$ where the label $I$ indicates that the vertices
of the first $K_p$ are labeled by $1, \ldots, p$.
We accomplish this by using `random' symplectic matrices to `smear' the entries of $D'=\mtx{D  & O\\ O&D}$ where $D=\diag(\lam_1,\dots,\lam_p)$. 

Specifically, let $A$ be a random 
 $p\x p$ matrix.  With high probability $A$ is invertible,   $S_A=\mtx{A & O\\O & (A^\top )^{-1}}$ is a symplectic matrix and $S_A^\top D'S_A=\mtx{A^\top D A & O\\ O & (A^{-1})D (A^\top )^{-1}}$ is 
 a positive definite matrix with symplectic eigenvalues $\lambda_1, \ldots, \lambda_p$.
Since $A$ is chosen randomly (and for this set of symplectic eigenvalues),     $\GL L(S_A^\top D'S_A)\cong (K_p\du K_p)^I$ with high probability.  Thus the labeled graph  $(K_p\du K_p)^I$  is symplectic spectrally arbitrary. 

Next let $B$ be a random $p\x p$ symmetric matrix, i.e., the entries of $B$ on the diagonal and above are chosen randomly. Then  $R_B=\mtx{I & B\\O & I}$ is a symplectic matrix and $R_B^\top S_A^\top D'S_AR_B=\mtx{A^\top D A & A^\top D AB\\B A\trans D A & BA^\top D AB+(A^{-1})D (A^\top )^{-1}}$. 
Since $A$ and $B$ are chosen randomly, 
the labeled graph of $(R_AS_B)^\top D'S_AR_B$ is $K_{2p}$ with high probability. 
\end{ex}

As noted in Section \ref{s:strong},  $N\in\symp(K_{2p})$ implies $N$ has the SSSP for any labeling $L$ of $K_{2p}$.  
However, not every matrix constructed by 
 the `random' method in {used} Example \ref{random} has the SSSP, as seen in the next  example.
 
\begin{ex}\label{random-notSSSP}
    A matrix $N\in\symp((K_p\du K_p)^I)$  constructed as in Example \ref{random} with all symplectic eigenvalues equal to one has the form $N=\mtx{A & O\\O & A^{-1}}$ where $A$ is positive definite (we have replaced $A\trans IA$ with $A$ arbitrary by $A$ with $A$ positive definite).  Applying Theorem \ref{SSSPequiv} we see that $Y\circ N=O$ implies $Y=\mtx{O & X\\X^T & O}$. Then $\Omega NY=Y N \Omega$ is  equivalent to $X\trans A=-AX$ (because this implies  $A^{-1}X\trans=-XA^{-1}$). This is equivalent to $X\trans A+AX=O$, which is a homogeneous system of  
    $\frac{p(p+1)}2$ linear equations in  $p^2$ variables, so {there exists} a nonzero solution $X$, 
    {and $N$ does not have the SSSP}. 
\end{ex}

We also observe that there are graphs that do not have any labeling that is symplectic spectrally arbitrary because they have too few edges.

\begin{rem}\label{notSSA}
     Any labeled graph that does not allow a sympPD matrix does not allow all eigenvalues equal, and so is not symplectic spectrally arbitrary. In particular, if $G$ is a graph of order $n=2p$ and $|E(G)|<3p-2$, then $G^L$ is not symplectic spectrally arbitrary for any labeling $L$ of $G$ by Corollary \ref{c:sympPD-sparseLB}.  This includes trees for $n\ge 4$ and unicyclic graphs for $n\ge 6$.
\end{rem}

As noted in Remark \ref{manySSA}, the graph $\tp^I$ is symplectic spectrally arbitrary, showing  that $3p-2$ is the minimum number of edges in a symplectic spectrally arbitrary graph $G^L$ of order $2p$.


\section{Symplectic  eigenvalues of order four graphs}\label{s:order4}
In this section we determine the possible symplectic eigenvalues  of each of the eleven graphs of order four.  To do this, we develop several additional tools for determining symplectic spectra, some of which are applications of strong properties. We begin with a remark that greatly simplifies this problem by applying the machinery we have developed.

\begin{rem}\label{o4sympPD3SSA}
    For $n=4$ there are only two eigenvalues and thus only two possible multiplicity cases: two equal or two distinct symplectic eigenvalues.  Since every labeled graph allows all possible distinct symplectic spectra (by Corollary \ref{AllSimple}), solving the ISEP for a labeled graph $G^L$ of order four is equivalent to determining whether $G^L$ allows a sympPD matrix; if so, it is symplectic spectrally arbitrary. 

Thus  every labeling of    $K_4$ is symplectic spectrally arbitrary {(with the SSSP)} by Example \ref{random}, $ ({\OL{K_2}}\vee K_2)^{M}$ (with the SSSP) by Example \ref{ex:pK1veeKp-SSSP}, and $K_{2,2}^{M}$  (with SSSP) by Example \ref{KppSympPD-SSSP}. 
By Corollary \ref{c:sympPD-sparseLB}, any connected graph on four vertices with fewer than four  edges does not allow a sympPD and thus has all symplectic eigenvalues simple.  Therefore, each labeling of $P_4$ and $K_{1,3}$ allows any set of two distinct symplectic eigenvalues and no other symplectic spectra.
\end{rem}

Among connected graphs, this leaves  the Paw graph, shown in Figure \ref{fig:paw-couple}, for which the symplectic eigenvalues need to be determined for various labelings,  and  other labelings of   $K_4-e\cong 2K_1\vee K_2$ and $C_4\cong K_{2,2}$.

So far we have been working entirely with labeled graphs. 
 However, many labelings must result in the same symplectic eigenvalues because they can be obtained from one another by symplectic permutation matrices or other symplectic matrices 
 that preserve the unlabeled graph; we call such labelings \emph{symplectically equivalent}.   When determining all symplectic eigenvalues of all graphs of a given order, it is desirable to consider only one representative from each class of symplectically equivalent labelings of a graph $G$.

A \textit{monomial matrix} is  a $(0,1,-1)$-matrix $R$ with exactly one nonzero in each row and column. 
Each monomial matrix has the form $R=EP$ where $E$ is a diagonal matrix each of whose diagonal entries are in $\{\pm 1\}$, 
and $P$ is a permutation matrix; note that $R\trans=R^{-1}$ for  each monomial matrix $R$. Let $R$ be a monomial matrix for which $R^{\top}{\Omega}R=\Omega$, and let $N$ be a positive definite matrix.
By Proposition \ref{sympev}, the  symplectic eigenvalues of $N$ 
are the moduli of the eigenvalues of $\Omega N$.
Note that $\Omega N= R^{\top} \Omega R N$ is similar to $\Omega R N R^{\top} $, so $N$ and $RNR^{\top}$ have the same symplectic eigenvalues. Therefore, if  $R=EP_\sigma$ and the labeled graph of $N$ is $G^L$, then $ \G^L(RNR^{\top})=\G^L(P_{\sigma} NP_{\sigma}^{\top})=G^{\sigma(L)}$. 
In other words, for such $\sigma$, 
studying the symplectic eigenvalues of matrices with labeled graph $G^{L}$ is  the same as studying the symplectic eigenvalues of matrices with labeled graph $G^{\sigma(L)}$. 

To identify  graphs  $G^{\sigma(L)}$ that allow the same symplectic spectra as $G^L$,  we need to find  the permutation matrices $P$ so that 
\[ 
\left[
\begin{array}{cc}
O & I\\
I & O  
\end{array}
\right]
\left[ \begin{array}{cc} 
 P_{11} &  P_{12} \\
 P_{21} & P_{22} 
 \end{array} \right] 
 = 
\left[ \begin{array}{cc} 
 P_{11} &  P_{12} \\
 P_{21} & P_{22} 
 \end{array} \right] 
 \left[
\begin{array}{cc}
O & I\\
I & O  
\end{array}
\right].
\] 
This is equivalent to $P_{21}=P_{12}$  and $P_{11}=P_{22}$. 
If the permutation corresponding to $P$ is $\sigma$, one can check that this implies 
$\sigma(i) = j$  if and only if $\sigma(i+p)=j+p$ (reading indices mod $2p$).  So the permutations $\sigma$ that work are those such that 
\[ \{ \{ \sigma(1), \sigma(1+p)\}, \ldots, \{ \sigma(p), \sigma(2p)\} \}= \{ \{ 1, 1+p\}, \ldots, \{p, 2p\} \}. \] 
So in particular there are $p! 2^p$ valid re-labelings.

We describe symplectically equivalent labeled graphs by couplings using vertex names. 

\begin{defn}
A \textit{coupling} of $\{v_1,\ldots, v_{2p}\}$ is a set $\PP=\{(c_1,d_1), \ldots, (c_p,d_p)\}$ such that each  of the vertices appears exactly once in $\PP$.
A \emph{coupled graph}, denoted by $\GC G C$, is a graph $G$ of order $n=2p$     with vertex set $V=\{v_1, v_2, \ldots, v_n\}$ and a coupling  $\PP$.  For each vertex $v_i$ in a coupled graph, we denote by $\pp(v_i)$ the vertex paired with $v_i$.
\end{defn}
Although we use the notation $(\cdot,\cdot)$ in the definition of a coupling to avoid confusion with graph edges, the pairs are unordered 
 in the coupling $\PP$. 
To see that there are  $(2p-1)(2p-3)\cdots (3)(1)$ couplings of a graph $G$ of order $n=2p$, note that there are $2p-1$ choices for   $\pp(v_1)$,   there are $2p-3$ choices for $\pp(v_j)$ where $j$ the next lowest index  excluding vertices $v_1$ and $\pp(v_1)$, etc.  
Associated to the coupled graph $G^\PP$  
with coupling $\PP=\{(c_1,d_1), \ldots, (c_p,d_p)\}$ is the set of $2^pp!$ labeled graphs obtained by  choosing a permutation $\sigma$ of $1,\dots,p$ and assigning the labels  $\sigma(k)$ and  $\sigma(k)+p$    
to $c_k$ and $d_k$ (either order) for $k=1,\dots,p$.  

By the discussion above, if $G^{L_1}$ and $G^{L_2}$ are two labelings of a graph $G$  described by one coupling, then $\symp(G^{L_1})$ and $\symp(G^{L_2})$ have the same symplectic spectra.  To study the inverse symplectic eigenvalue problem of a coupled graph $\GC G C$, we  choose any one of the labeled graphs $\GLG$ described by the coupled graph (a \emph{representative labeling}) and then determine the possible symplectic spectra of matrices in $\symp(\GLG)$.   
 The family of all real positive definite matrices associated with the $2^pp!$ labelings described by a coupled $\GC G C$  is denoted by $\symp(\GC G C)$.

We now return to the study of graphs of order four ($n=4, p=2$). 

\begin{rem}\label{r:ord4couplings}
     When studying the 
the sets of symplectic eigenvalues allowed by 
matrices whose (unlabeled) graph $G$ has vertex names $v_1,v_2,v_3,v_4$,  there are  three couplings, 
 $\PP_1=\{(v_1,v_2),(v_3,v_4)\}$, $\PP_2=\{(v_1,v_3),(v_2,v_4)\}$, $\PP_3=\{(v_1,v_4),(v_2,v_3)\}$.  When determining possible symplectic eigenvalues of matrices in $\symp(G^L)$, we  therefore need to consider at most three distinct representative labelings, one for each coupling (symmetries of the unlableled graph sometimes reduce the number of couplings that need to be considered).
\end{rem}

 \begin{ex}
 There are 
eight labelings  described by a coupled graph $\GC G C$, where the vertices are $v_1,v_2,v_3,v_4$ and the coupling is $\PP=\{(v_1,v_3), (v_2,v_4)\}$. Listed in lexicographical order they are:
\[
v_1=1, v_2=2, v_3=3, v_4=4; \qquad 
v_1=1, v_2=4, v_3=3, v_4=2; \]
\[v_1=2, v_2=1, v_3=4, v_4=3; \qquad 
v_1=2, v_2=3, v_3=4, v_4=1;  \]
\[v_1=3, v_2=2, v_3=1, v_4=4; \qquad 
v_1=3, v_2=4, v_3=1, v_4=2; \] 
\[v_1=4, v_2=1, v_3=2, v_4=3; \qquad 
v_1=4, v_2=3, v_3=2, v_4=1.
\] 
\end{ex}

\begin{ex}\label{ex:paw}
Consider the paw graph, which has  $n= 4$ and  $p=2$.
 One representative labeling for each coupling $\PP_i,i=1,2,3$ as defined in Remark \ref{r:ord4couplings} is shown in Figure \ref{fig:paw-couple}; we have chosen a representative with the degree one vertex always labeled 1. Observe that the labelings for $\PP_2$ and $\PP_3$ determine the same pattern of nonzero entries in the matrices, so there are only two cases, with the pendent vertex labeled 2 or 3. 

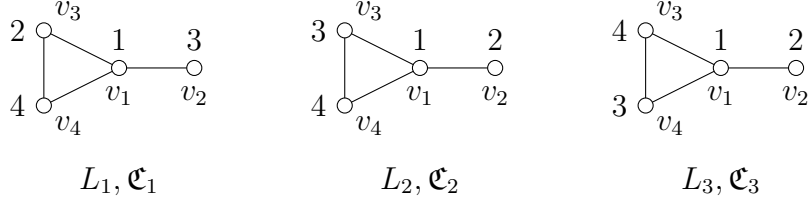
\begin{figure}[H]
\begin{center}
\begin{tikzpicture}
\draw(1,0)--(0,0.5)--(0,-0.5)--(1,0)--(2,0); \draw[fill=white](1,0)circle(0.1); \node[below]at(1,-0.1){$v_1$}; \draw[fill=white](2,0)circle(0.1); \node[below]at(2,-0.1){$v_2$}; \draw[fill=white](0,0.5)circle(0.1); \node[above right]at(0,0.5){$v_3$}; \draw[fill=white](0,-0.5)circle(0.1); \node[below right]at(0,-0.5){$v_4$};
\node[above]at(1,0.1){$1$}; \node[above]at(2,0.1){$3$}; \node[left]at(-0.1,0.5){$2$}; \node[left]at(-0.1,-0.5){$4$}; \node at(1,-1.5){$L_1, \PP_1$};

\begin{scope}[shift={(4,0)}]
\draw(1,0)--(0,0.5)--(0,-0.5)--(1,0)--(2,0); \draw[fill=white](1,0)circle(0.1); \node[below]at(1,-0.1){$v_1$}; \draw[fill=white](2,0)circle(0.1); \node[below]at(2,-0.1){$v_2$}; \draw[fill=white](0,0.5)circle(0.1); \node[above right]at(0,0.5){$v_3$}; \draw[fill=white](0,-0.5)circle(0.1); \node[below right]at(0,-0.5){$v_4$};
\node[above]at(1,0.1){$1$}; \node[above]at(2,0.1){$2$}; \node[left]at(-0.1,0.5){$3$}; \node[left]at(-0.1,-0.5){$4$}; \node at(1,-1.5){$L_2, \PP_2$};
\end{scope}

\begin{scope}[shift={(8,0)}]
\draw(1,0)--(0,0.5)--(0,-0.5)--(1,0)--(2,0); \draw[fill=white](1,0)circle(0.1); \node[below]at(1,-0.1){$v_1$}; \draw[fill=white](2,0)circle(0.1); \node[below]at(2,-0.1){$v_2$}; \draw[fill=white](0,0.5)circle(0.1); \node[above right]at(0,0.5){$v_3$}; \draw[fill=white](0,-0.5)circle(0.1); \node[below right]at(0,-0.5){$v_4$};
\node[above]at(1,0.1){$1$}; \node[above]at(2,0.1){$2$}; \node[left]at(-0.1,0.5){$4$}; \node[left]at(-0.1,-0.5){$3$}; \node at(1,-1.5){$L_3, \PP_3$};
\end{scope}
\end{tikzpicture}\\
\caption{For $i=1,2,3$, a representative labeling $L_i$ associated with the coupling $\PP_i$, as defined in Remark \ref{r:ord4couplings}\label{fig:paw-couple}.}
\end{center}
\end{figure}

\noi\underline{Coupling $\PP_1=
\{(v_1,v_2),(v_3,v_4)\}$ {and labeling $L_1$ shown in Figure \ref{fig:paw-couple}}:} The pendent vertex is labeled $3$.
Then positive definite matrices {$\symp({\rm Paw}^{L_1})$}  have the form 
\[
\left[\begin{array}{rrrr}
a & f & e & g \\
f & c & 0 & h \\
e & 0 & b & 0 \\
g & h & 0 & d
\end{array}\right] {.}
\] 
One can verify that 
\[
(\Omega N)^2= 
\left[\begin{array}{rrrr}
-a b + e^{2} & -b f & 0 & -b g \\
-d f + e g + g h & -c d + h^{2} & b g & 0 \\
0 & e f + c g - f h & -a b + e^{2} & -d f + e g + g h \\
-e f - c g + f h & 0 & -b f & -c d + h^{2}
\end{array}\right] {.}
\] 
In particular, the $(1,4)$-entry is nonzero. 
Hence $(\Omega N)^2 +tI \neq O$ for each $t$.  Hence every positive definite matrix 
with this partitioned graph has distinct symplectic eigenvalues.
\bigskip

\noi\underline{Coupling $\PP_2=\{(v_1,v_3),(v_2,v_4)\}$ {and labeling $L_2$ shown in Figure \ref{fig:paw-couple}}:} 
The pendant vertex  is labeled 2. 
 The leading principal minors of
\[ 
 N=
\left[ \begin{array}{rrrr}
 3 &-1 &  1&  1 \\
-1 &  1 &  0&   0 \\
1 &  0&  1 & 1\\
 1 & 0 & 1 & 2
 \end{array} \right].
\] 
 equal 3, 2, 1 and 1{, so} 
$N$ is positive definite.  
Also, one can verify that  $(\Omega N)^2=-I$. Hence, the symplectic eigenvalues of $N$ each equal $1$. 
 {Since $\G^L(N)={\rm Paw}^{L_2}$,} this labeled  Paw graph is symplectic spectrally arbitrary.

Additionally, $N$ has the SSSP.  To see this, 
note that if $Y$ is a matrix of the form
\[ \left[\begin{array}{rrrr}
0 & 0 & 0 & 0 \\
0 & 0 & a & b \\
0 & a & 0 & 0 \\
0 & b & 0 & 0
\end{array}\right] 
\]
such that 
$\Omega NY= YN\Omega$, then 
\[ \left[\begin{array}{rrrr}
0 & a + b & 0 & 0 \\
a + b & 2 \, a + 4 \, b & -a - b & 0 \\
0 & -a - b & 2 \, a & -a + b \\
0 & 0 & -a + b & -2 \, b
\end{array}\right]=O.\]
This occurs only if $Y=O$, and hence $N$ has the SSSP.  
\end{ex}

\begin{ex}\label{K22cup2}
 Name the vertices of $K_{2,2}$ so that the partite sets are $\{v_1,v_2\}$ and  $\{v_3,v_4\}$. Then $\PP_2$ is covered by the labeled graph $K_{2,2}^M$, which was shown to allow a sympPD matrix {with the SSSP} in Example \ref{KppSympPD-SSSP} and by  symmetry the coupling $\PP_3$ is covered.

For the coupling $\PP_1=\{(v_1,v_2),(v_3,v_4)\}$, consider the matrix
\begin{align}
\label{eq:PP2}
 N=
 \mtx{
 \begin{array}{rrrr}
2 & 1 & 0 & 1 \\
 1 & 2 & 1 & 0 \\
 0 & 1 & 2 & -1 \\
 1 & 0 & -1 & 2 
 \end{array}
 }.
\end{align} 
The (standard) eigenvalues of $N$ are $2+\sqrt 2, 2+\sqrt 2,2-\sqrt 2,2-\sqrt 2$, so  $N$ is positive definite.
Also, one can verify that $(\Omega N)^2=-2I$, so $\spspec(N)=\{\sqrt 2,\sqrt 2\}$.
  Thus this labeling of $K_{2,2}$ is symplectic spectrally arbitrary. Note that the labeling used here is that of $C_4^I$  in Example \ref{ex:verify-cycle}, where it is shown that every  matrix of this form  has the SSSP ($C_4^I$  and $K_{2,2}^M$ are  different labelings of the same graph),   and hence all labelings of $C_4\cong K_{2,2}$ are  symplectic spectrally arbitrary and for each labeling there is a sympPD matrix with the SSSP. 
\end{ex}

\begin{ex}\label{K4-ecup2}
Name the vertices of $\OL{K_2}\vee K_2$ so that  $v_1$ and $v_2$ are not adjacent.
 Then $\PP_2$ is covered by the labeled graph $(\OL{K_2}\vee K_2)^M$ and by  symmetry the coupling $\PP_3$ is covered.  
For the coupling $\PP_1=\{(v_1,v_2),(v_3,v_4)\}$, we use the labeling $L(v_1)=1, L(v_2)=3, L(v_3)=2, L(v_4)=4$ and observe that  $(\OL{K_2}\vee K_2)^L$ is a supergraph of $C_4^I$.  Since  a sympPD matrix $N\in\symp(C_4^I)$ with the SSSP was exhibited in Example \ref{K22cup2}, Theorem \ref{supergraph} implies that $(\OL{K_2}\vee K_2)^L$ allows a sympPD matrix and is therefore symplectic spectrally arbitrary.  \end{ex}
\ms

  Next we consider the five unlabeled disconnected graphs of order four: $4K_1$, $2K_1\du K_2$, $2K_2$, $K_1\du P_3$, and $K_1\du K_3$.
The graph $4K_1$ is  clearly symplectic spectrally arbitrary. The graphs $K_1\du P_3$ and $K_1\du K_3$ must have all simple symplectic eigenvalues by Proposition \ref{iso-isol}.
For the remaining two disconnected graphs of order four, we consider the coupling. Let $\GC G C$ be a coupled graph with subgraph $H$.   The coupling \emph{respects $H$} if $\pp(v)\in H$ for every $v\in V(H)$; in this case, the \emph{coupling $\PP$ restricted to $H$} is the coupling $\PP|_{H}$ of $H$ where $\pp|_{H}(v)=\pp(v)$.  We first develop a matrix tool.

 \begin{thm}
\label{directsum}
Let $P$ and $Q$ be  $2m \times 2m$ 
and $2r \times 2r$  positive definite matrices
with block forms
$
P=\left[ \begin{array}{cc}
P_1  & P_2 \\ 
P_2\trans & P_3 \end{array} \right]$,
$Q=\left[ \begin{array}{cc}
Q_1  & Q_2 \\ 
Q_2\trans & Q_3 \end{array} \right]$
where $P_1$ is $m\times m$ (respectively, $Q_1$ is 
$r \times r$), 
and let 
\[ N=\left[ 
\begin{array}{cccc}
P_1 & O & P_2 & O \\  
O & Q_1 & O & Q_2 \\ 
P_2\trans & O &P_3 & O \\  
O & Q_2\trans & O & Q_3
\end{array} \right]. 
\] 
Then: 
\ben[$(a)$]
\item 
\label{directsum-a}
$N$ is positive definite.
\item
\label{directsum-b}$
\spspec(N)=\spspec(P) \cup \spspec(Q)$.
\item
\label{directsum-c}
If $P$ and $Q$ have the SSSP and  $\spspec(P) \cap \spspec(Q)= \emptyset$, then $N$ has the SSSP. 
\een
\end{thm}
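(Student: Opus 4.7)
\medskip

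\noindent\textbf{Proof plan.} The plan is to realize $N$ as a permutation similarity of $P\oplus Q$ by a specially chosen permutation matrix $R$ that also intertwines the two relevant symplectic forms. Concretely, let $\tau$ be the permutation of $\{1,\dots,2(m+r)\}$ that sends $[1,m]\mapsto[1,m]$, $[m+1,m+r]\mapsto[2m+1,2m+r]$, $[m+r+1,2m+r]\mapsto[m+1,2m]$, and $[2m+r+1,2(m+r)]\mapsto[2m+r+1,2(m+r)]$, and let $R$ be the corresponding permutation matrix. A direct block check shows
\[
R^{\top}(P\oplus Q)R \;=\; N \qquad\text{and}\qquad R^{\top}\bigl(\Omega_{2m}\oplus\Omega_{2r}\bigr)R \;=\; \Omega_{2(m+r)}.
\]
Part \eqref{directsum-a} is then immediate because positive definiteness is invariant under permutation similarity. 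For \eqref{directsum-b}, the two identities above give
\[
\Omega_{2(m+r)} N \;=\; R^{\top}\bigl(\Omega_{2m}P\oplus \Omega_{2r}Q\bigr)R,
\]
so $\Omega_{2(m+r)} N$ is similar to $\Omega_{2m}P\oplus \Omega_{2r}Q$; the result then follows by applying Proposition~\ref{sympev} to $N$, $P$, and $Q$.

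For \eqref{directsum-c}, I would use the characterization of SSSP in Theorem~\ref{SSSPequiv}. Suppose $Y$ is symmetric with $N\circ Y=O$ and $\Omega_{2(m+r)}NY=YN\Omega_{2(m+r)}$, and let $\widetilde Y=RYR^{\top}$. Because permutation similarity commutes with Hadamard product in the natural way, $N\circ Y=O$ translates to $(P\oplus Q)\circ\widetilde Y=O$, and conjugating the commutator equation by $R$ yields
\[
(\Omega_{2m}\oplus\Omega_{2r})(P\oplus Q)\widetilde Y \;=\; \widetilde Y(P\oplus Q)(\Omega_{2m}\oplus\Omega_{2r}).
\]
Write $\widetilde Y$ in block form $\begin{bmatrix} Z & W \\ W^{\top} & V\end{bmatrix}$ conforming to $P\oplus Q$, with $Z$ of size $2m$, $V$ of size $2r$, and $W$ of size $2m\times 2r$.

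The diagonal block equations give $P\circ Z=O$, $\Omega_{2m}PZ=ZP\Omega_{2m}$ (with $Z$ symmetric), and the analogous conditions for $V$ with $Q$. Since $P$ and $Q$ have the SSSP, Theorem~\ref{SSSPequiv} forces $Z=O$ and $V=O$. The off-diagonal block yields the Sylvester equation
\[
(\Omega_{2m}P)\,W \;=\; W\,(Q\Omega_{2r}).
\]
Since $Q\Omega_{2r}$ and $\Omega_{2r}Q$ have the same eigenvalues (both factors invertible), Proposition~\ref{sympev} tells us the eigenvalues of $\Omega_{2m}P$ are $\{\pm i\lambda : \lambda\in\spspec(P)\}$ and those of $Q\Omega_{2r}$ are $\{\pm i\mu : \mu\in\spspec(Q)\}$. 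Because symplectic eigenvalues are positive reals, these two eigenvalue sets are disjoint precisely when $\spspec(P)\cap\spspec(Q)=\emptyset$. Under that disjointness hypothesis, the Sylvester equation has only the trivial solution $W=O$, so $\widetilde Y=O$ and hence $Y=O$, proving $N$ has the SSSP.

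\medskip

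The bookkeeping for part (a) and the eigenvalue identification in (b) are essentially routine once the intertwining $R\Omega_{2(m+r)}R^{\top}=\Omega_{2m}\oplus\Omega_{2r}$ is in hand; the main obstacle is part (c), where one must see that the SSSP condition on $N$ decouples across blocks and recognize that the off-diagonal block is governed by a Sylvester equation whose injectivity is controlled exactly by the purely imaginary eigenvalues produced by Proposition~\ref{sympev}, yielding the spectral-disjointness hypothesis.
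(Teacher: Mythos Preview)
Your proof is correct and follows essentially the same approach as the paper: introduce a permutation matrix $R$ that simultaneously carries $N$ to $P\oplus Q$ and $\Omega_{2(m+r)}$ to $\Omega_{2m}\oplus\Omega_{2r}$, deduce \eqref{directsum-a} and \eqref{directsum-b} from similarity of $\Omega N$ with $(\Omega_{2m}P)\oplus(\Omega_{2r}Q)$, and for \eqref{directsum-c} transfer the SSSP equations through $R$, kill the diagonal blocks of $\widetilde Y$ via the SSSP of $P$ and $Q$, and eliminate the off-diagonal block by Sylvester's theorem using the disjointness of $\spec(\Omega_{2m}P)$ and $\spec(Q\Omega_{2r})$. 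The only cosmetic difference is that your $R$ is the transpose of the paper's.
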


\begin{proof} 
Since $N$ is permutationally similar to $P\oplus Q$, \eqref{directsum-a} holds. 

Let $p=m+r$, 
\[R= \mtx{
I_m & O & O & O \\ 
O & O & I_r & O \\  
O & I_m & O & O \\ 
O & O & O & I_r
} \!\!, \ \widehat{\Omega}= R\trans \Omega R, \  \mbox{and} \ \widehat{N}= R\trans N R.\] 
Then 
\beq \label{eq1} \widehat{\Omega}=
\Omega_m \oplus \Omega_r \ \mbox{ and }\  \widehat{N}= P \oplus Q .\eeq 
Since $R$ is a permutation matrix,  
\[ \spec(\Omega N)=\spec(R\trans \Omega N R)=\spec(\wh \Omega \wh N) =\spec((\Omega_m P)\oplus ( \Omega_r Q))=\spec(\Omega_m P)\cup \spec(\Omega_r Q).\]
Therefore, \eqref{directsum-b} holds.

Now assume that both $P$ and $Q$ have the SSSP, and let $Y$ be a symmetric  $p\x p$ matrix such that $N \circ Y=O$
and  $\Omega N Y = Y N \Omega$.
Let 
\[\widehat{Y} = R\trans Y R = \mtx{\widehat{Y}_{11} & \widehat{Y}_{12} \\ 
{\widehat{Y}_{12}}\trans & \widehat{Y}_{22}}
\]
where $\wh Y_{11}$ is $m\x m$ and $\wh Y_{22}$ is $r\x r$.  Evidently, 
\begin{eqnarray}
\label{eq3}
 \widehat{\Omega} \widehat{N}\widehat{Y}&=&\widehat{Y}\widehat{N}\widehat{\Omega}, \mbox{ and }
\\
\label{eq4} 
\widehat{N} \circ \widehat{Y}& =&O.
\end{eqnarray}

Equations \eqref{eq1} and \eqref{eq3} imply that 
\begin{eqnarray}
\label{eq5}
\Omega_m P\widehat{Y}_{11} &=& 
\widehat{Y}_{11}P\Omega_m, \\
\label{eq6}
\Omega_r Q\widehat{Y}_{22}  &= &  
\widehat{Y}_{22}Q\Omega_r, \mbox{ and }\\
\label{eq7} 
 \Omega_m P \widehat{Y}_{12} & = &  \widehat{Y}_{12} Q \Omega_r  
\end{eqnarray}

By (\ref{eq4}), (\ref{eq5}), and the fact $P$ has the SSSP, 
$\widehat{Y}_{11}=O$.  Similarly, 
$\widehat{Y}_{22}=O$.  Equation (\ref{eq7}) 
implies that $(\Omega_m P) \widehat{Y}_{12}  -  \widehat{Y}_{12} (Q \Omega_r)=O$.
  Since $P$ and $Q$ have no common symplectic eigenvalue, $\spec(\Omega_m P)\cap \spec(\Omega_r Q)=\emptyset$, and $\spec(Q \Omega_r)=\spec( \Omega_r Q)$.
Then Sylvester's Theorem (see, e.g.,  \cite[Theorem 2.4.4.1]{HJ}) implies $\widehat{Y}_{12}=O$. Hence $\widehat{Y}=O$, and $Y=O$. By Theorem \ref{SSSPequiv}, \eqref{directsum-c} holds.
\end{proof}

 Let $\GC G C$ be  a coupled  graph of order $n=2p$. For a multiset $\LS$
of $p$ positive real numbers, we say $\GC G C$ \emph{allows the symplectic spectrum $\LS$} (respectively, \emph{allows the symplectic spectrum $\LS$ with the SSSP})  if there is a representative labeling $L$ for $\GC G C$ and a matrix $N\in\symp(G^L)$ such that $\spspec(N)=\LS$ (respectively,  $\spspec(N)=\LS$ and $N$ has the SSSP). We also apply similar terminology to a graph and standard spectrum, e.g., $G$ \emph{allows the  spectrum $\Lambda$}   if there is  a matrix $A\in\symm(G)$ such that $\spec(A)=\Lambda$.

\begin{cor}\label{symplectic_disj_union}  Let $\GC G C$ be a coupled graph, let  $G_1$ be a subgraph respected by $\PP$, and let $G_2$ be the subgraph of $G$ induced by the complement  of $V(G_1)$.  Denote the order of $G$ by $2p$ and the order of $G_1$ by $2m$, and let $r=p-m$.
\ben[$(a)$]
\item\label{Ssubgarph1}   If $G=G_1\du G_2$ (i.e., there are no edges between $G_1$ and $G_2$) and $G_i^{\PP|_{G_i}}$ allows symplectic spectrum $\LS_i$ for $i=1,2$, then $\GC G C$ allows the symplectic spectrum $\LS_1\cup\LS_2$. 

If $m=r=1$ and $G=G_1\du G_2$, then $G$ allows all symplectic eigenvalues equal, and so is symplectic spectrally arbitrary.

\item \label{Ssubgarph2}  If  $G_1^{\PP|_{G_1}}$ allows symplectic spectrum $\LS_1$ with the SSSP, then $\GC G C$ allows allows the symplectic spectrum $\LS_1\cup \{d_1,\dots,d_r\}$ with SSSP for any distinct positive real numbers $d_i, i=1,\dots,r$ such that $d_i\not\in\LS_1$ for $i=1,\dots,r$.
\een 
  \end{cor}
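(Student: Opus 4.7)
The plan is to apply Theorem \ref{directsum} (and, for part (b), the Supergraph Theorem) to the matrices guaranteed by the hypothesis. The only bookkeeping work is to choose consistent representative labelings so that the block structure of Theorem \ref{directsum} produces a matrix whose labeled graph matches what is required.

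For part (a), I first pick representative labelings. Because the coupling $\PP$ respects $G_1$, it also respects $G_2$, and we may choose a representative labeling $L$ of $\GC G C$ that assigns the labels $\{1,\ldots,m\}\cup\{p{+}1,\ldots,p{+}m\}$ to $V(G_1)$ and the labels $\{m{+}1,\ldots,p\}\cup\{p{+}m{+}1,\ldots,2p\}$ to $V(G_2)$, so that $L|_{G_i}$ is a representative labeling $L_i$ of $\PP|_{G_i}$. By hypothesis, there are $N_1\in \symp(G_1^{L_1})$ and $N_2\in\symp(G_2^{L_2})$ with $\spspec(N_i)=\LS_i$; write $N_1=\begin{bmatrix}P_1 & P_2\\ P_2\trans & P_3\end{bmatrix}$, $N_2=\begin{bmatrix}Q_1 & Q_2\\ Q_2\trans & Q_3\end{bmatrix}$ with $P_1$ of size $m$ and $Q_1$ of size $r$. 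Theorem \ref{directsum}\eqref{directsum-a},\eqref{directsum-b} yields a positive definite matrix $N$ of the interleaved block form displayed there with $\spspec(N)=\LS_1\cup \LS_2$. By construction, $N$ has no nonzero entries between the $G_1$-block and the $G_2$-block, and its nonzero pattern on each block agrees with $N_1$ and $N_2$ respectively; since $G=G_1\du G_2$, this shows $N\in\symp(G^L)$.

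For the $m=r=1$ subcase, every $2{\times}2$ positive definite matrix has its single symplectic eigenvalue equal to $\sqrt{\det N}$, which takes every positive real value on either $K_2$ or $\overline{K_2}$. Hence both $G_1^{\PP|_{G_1}}$ and $G_2^{\PP|_{G_2}}$ allow the common symplectic spectrum $\{\lambda\}$ for any $\lambda>0$. Applying part (a) with $\LS_1=\LS_2=\{\lambda\}$, $\GC G C$ allows $\{\lambda,\lambda\}$ for every $\lambda>0$. Together with Corollary \ref{AllSimple}, which supplies every multiset of two \emph{distinct} positive reals, this covers every multiset of size two and so $\GC G C$ is symplectic spectrally arbitrary.

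For part (b), let $N_1\in\symp(G_1^{L_1})$ realize $\LS_1$ with the SSSP, and set $N_2=D\oplus D$ with $D=\diag(d_1,\ldots,d_r)$. Since the $d_i$ are distinct, Lemma \ref{Diag} gives $\spspec(N_2)=\{d_1,\ldots,d_r\}$ and that $N_2$ has the SSSP; the labeled graph of $N_2$ is the empty graph on its $2r$ vertices. Because $d_i\notin\LS_1$ for all $i$, $\spspec(N_1)\cap\spspec(N_2)=\emptyset$, and Theorem \ref{directsum}\eqref{directsum-c} produces a positive definite matrix $N$ with the SSSP and $\spspec(N)=\LS_1\cup\{d_1,\ldots,d_r\}$. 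Choosing $L$ as in part (a), the labeled graph of $N$ is the disjoint union of $G_1^{L_1}$ with the empty graph on $V(G_2)$, which is a (labeled) subgraph of $G^L$. Applying the Supergraph Theorem (Theorem \ref{supergraph}) yields a matrix $N'\in\symp(G^L)$ with the SSSP and the same symplectic spectrum as $N$, completing the proof. The only real point to watch is that the labeling $L$ chosen up front is simultaneously a representative of $\PP$, a representative of $\PP|_{G_1}$ after restriction, and a representative of $\PP|_{G_2}$ after restriction; all subsequent applications of Theorem \ref{directsum} and Theorem \ref{supergraph} then slot together without further adjustment.
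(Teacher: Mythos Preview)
Your proof is correct and follows the same approach as the paper: choose a representative labeling that places $G_1$ on labels $\{1,\ldots,m\}\cup\{p{+}1,\ldots,p{+}m\}$, then invoke Theorem \ref{directsum} on the block-interleaved matrix. For part (a) and the $m=r=1$ subcase your argument matches the paper's (the paper simply cites Remark \ref{o4sympPD3SSA} for the latter, which amounts to your observation plus Corollary \ref{AllSimple}).

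For part (b) you are actually more careful than the paper. The paper writes ``Define $N\in\symp(G^L)$ such that $N[\alpha_1]=N_1$, $N[\alpha_2]=D$ and all other entries are zero,'' but the matrix so constructed has labeled graph $G_1^{L_1}\du \overline{K_{2r}}$, which is only a subgraph of $G^L$ in general (part (b) does not assume $G=G_1\du G_2$, nor that $G_2$ is edgeless). Your explicit invocation of the Supergraph Theorem (Theorem \ref{supergraph}) is exactly what is needed to pass from this subgraph to $G^L$ while preserving both the symplectic spectrum and the SSSP.
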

\bpf Choose   a representative labeling $L$  of $\PP$ such that  $G_1$ is labeled by  $\alpha_1=\{1,\dots,m\}\cup\{p+1,\dots,p+m\}$.  Observe that $\PP$ respects $G_2$ and  that  $G_2$ is labeled by $\alpha_2=\{m+1,\dots,m+r\}\cup\{p+m+1,\dots,2p\}$.
Let $L_i$ denote the labeling $L$ restricted to $G_i$. 

\eqref{Ssubgarph1}: Assume the hypotheses.   
Choose $N_i\in\symp(G_i^{L_i})$ with $\spspec(N_i)=\LS_i$ and define $N\in\symp(G^L)$ such that $N[\alpha_i]=N_i$ and all other entries are zero.  Then \[\spspec(N)=\spspec(N_1)\cup\spspec(N_2)=\LS_1\cup\LS_2\] by Theorem \ref{directsum}.  The second statement is immediate from the first statement and Remark \ref{o4sympPD3SSA}.

\eqref{Ssubgarph2}: Assume $G_1^{\PP|_{G_1}}$ allows symplectic spectrum $\LS_1$ with the SSSP and let  $d_1,\dots,d_r$ be distinct positive real numbers such that $d_i\not\in\LS_1$ for $i=1,\dots,r$. 
Choose $N_1\in\symp(G_1^{L_1})$ with $\spspec(N_1)=\LS_1$ and let $D$ be an $r\x r$ matrix with diagonal entries $d_1,\dots,d_r$.
Define $N\in\symp(G^L)$ such that $N[\alpha_1]=N_1$, $N[\alpha_2]=D$ and all other entries are zero. Then $\spspec(N)=\LS_1\cup \{d_1,\dots,d_r\}$ and $N$ has the SSSP by Theorem \ref{directsum}. 
\epf

Corollary \ref{symplectic_disj_union}
\eqref{Ssubgarph1} 
can be used to study certain labeled graphs 
that are  disjoint unions of labeled graphs. 

\begin{ex}\label{ex:2K1K2}
    Consider the graph $2K_1\du K_2$ where the isolated vertices are named $v_1$ and $v_3$ and the vertices of $K_2$ are named $v_2$ and $v_4$.  By graph symmetry, we need consider only two couplings, $\PP_2=\{(v_1,v_3), (v_2,v_4)\}$, which respects $2K_1$ and $K_2$, and a coupling such as $\PP_1=\{(v_1,v_2), (v_3,v_4)\}$ where an isolated vertex is coupled with a vertex of $K_2$.
     By Corollary \ref{symplectic_disj_union}\eqref{Ssubgarph1}, $G^{\PP_2}$ allows all symplectic spectra, so  every labeling $L$ associated with $\PP_2$, the labeled graph $(2K_1\du K_2)^L$ is symplectic spectrally arbitrary. A labeling associated with $\PP_1$ requires simple symplectic eigenvalues by Proposition \ref{iso-isol}.
\end{ex}

\begin{ex}\label{ex:2K2}
    Consider the graph $2K_2$ where the the vertices of one $K_2$ are named $v_1$ and $v_3$ and of the other $K_2$ are named $v_2$ and $v_4$.  By graph symmetry, we need consider only two couplings, $\PP_2=\{(v_1,v_3), (v_2,v_4)\}$, which respects $K_2$ and $K_2$, and a coupling such as $\PP_1=\{(v_1,v_2), (v_3,v_4)\}$ where a vertex of one $K_2$ is coupled with a vertex of the other $K_2$.
For every labeling $L$ associated with $\PP_2$, $(K_2\du K_2)^L$  is symplectic spectrally arbitrary by Corollary \ref{symplectic_disj_union}\eqref{Ssubgarph1}.

 Now consider the coupling $\PP_1=\{(v_1,v_2), (v_3,v_4)\}$ with labeling $L$ defined by {$L(v_1)= 1,  L(v_2)= 3, L(v_3)= 2,L(v_4)=4$}.  
 This is $(K_2\du K_2)^I$, which is symplectic spectrally arbitrary by Example \ref{random}.
 So every labeling of $K_2\du K_2$ is symplectic spectrally arbitrary.
\end{ex}

 The two previous examples complete the solution to the ISEP-$G$ for graphs of order four.
Corollary \ref{symplectic_disj_union}
\eqref{Ssubgarph2} can be used to show to provide forbidden families of labeled subgraphs for certain symplectic spectral properties. 

\begin{ex}
\label{ex:P6}
Consider the matrix 
\[ 
N=
\renewcommand{\arraystretch}{1.25}
\left[\begin{array}{rrrrrr}
1 & \frac{1}{2} & 0 & 0 & 0 & 0 \\
\frac{1}{2} & \frac{5}{4} & \frac{1}{2} & 0 & 0 & 0 \\
0 & \frac{1}{2} & \frac{24}{25} & \frac{1}{10} & 0 & 0 \\
0 & 0 & \frac{1}{10} & 1 & -\frac{1}{2} & 0 \\
0 & 0 & 0 & -\frac{1}{2} & \frac{5}{4} & -\frac{1}{2} \\
0 & 0 & 0 & 0 & -\frac{1}{2} & 1
\end{array}\right].
\]
{Let $\II$ denote a coupling of a path graph such that labeling the vertices  in path order is a representative labeling, so the coupled graph  of $N$ is $P_6^\II$.} 
The determinants of the leading principal submatrices of $N$ are
each positive; namely, they are
$1$, $1$, $71/100$, $7/10$,
$279/400$ and $209/400$.
Hence $N$ is positive semidefinite. 
The matrix 
\[
\Omega N=
\renewcommand{\arraystretch}{1.25}
\left[\begin{array}{rrrrrr}
0 & 0 & \frac{1}{10} & 1 & -\frac{1}{2} & 0 \\
0 & 0 & 0 & -\frac{1}{2} & \frac{5}{4} & -\frac{1}{2} \\
0 & 0 & 0 & 0 & -\frac{1}{2} & 1 \\
-1 & -\frac{1}{2} & 0 & 0 & 0 & 0 \\
-\frac{1}{2} & -\frac{5}{4} & -\frac{1}{2} & 0 & 0 & 0 \\
0 & -\frac{1}{2} & -\frac{24}{25} & -\frac{1}{10} & 0 & 0
\end{array}\right]
\] 
 has characteristic polynomial $(x^2 + 209/400)(x^2 + 1)^2$.
Hence $N$ has symplectic eigenvalues $1$, $1$, and
 $\sqrt{209/400}$. In particular, $N$ has a multiple symplectic eigenvalue.
A direct computation shows that $N$ has the SSSP. 
\end{ex}  

The following is a consequence of Corollary \ref{symplectic_disj_union}
\eqref{Ssubgarph1}, 
Example \ref{ex:paw}, Example \ref{ex:P6}, Example \ref{K22cup2}, and Example \ref{random}.

\begin{thm}\label{forbid4simple}
 Let $\GC G C$ be a coupled graph on $2p$ vertices such that 
 $G_1$ is a subgraph respected by $\PP$ on $2k$ vertices. 
 If $G_1$ is any of the following, then there is a positive definite matrix
  with labeled graph  representing $\GC GC$ with a non-simple symplectic eigenvalue: 

\ben[\rm $(a)$]
\item A paw whose pendent edge does not join a vertex $v$ with $\mathfrak{c}({v})$.
\item $P_6^{\II}$.
\item A $4$-cycle.
\item A complete graph of order at least four.
\een 
\end{thm}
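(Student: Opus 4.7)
The plan is a uniform three-step argument for each of the four cases, combining the pre-built examples (which already supply matrices with a non-simple symplectic eigenvalue and the SSSP) with the direct-sum construction and the Supergraph Theorem.

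First I would assemble the $G_1$-local ingredient: for each listed $G_1$, the cited example produces a positive definite matrix $N_1$ whose labeled graph is $G_1^{L_1}$ for some representative labeling of $\PP|_{G_1}$, having a repeated symplectic eigenvalue \emph{and} the SSSP. Specifically, Example \ref{ex:paw} (coupling $\PP_2$) supplies such an $N_1$ for a paw whose pendant is not coupled to the branching vertex; Example \ref{ex:P6} supplies one for $P_6^{\II}$; for $C_4$, Example \ref{K22cup2} together with the matrix of Example \ref{KppSympPD-SSSP} and the observation of Example \ref{ex:verify-cycle} (that every $N\in\symp(C_4^I)$ has the SSSP) cover both admissible couplings; and for $K_{2k}$ with $k\ge 2$, Example \ref{random} produces a positive definite matrix with any prescribed symplectic spectrum (in particular one with a repeated value), while matrices in $\symp(K_{2k})$ automatically have the SSSP, as noted after Definition \ref{d:SSSP}.

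Next I would lift $N_1$ to an $n\times n$ matrix. Set $r=p-k$ and choose distinct positive reals $d_1,\dots,d_r$ disjoint from $\spspec(N_1)$; apply Corollary \ref{symplectic_disj_union}\eqref{Ssubgarph2} (whose engine is Theorem \ref{directsum}\eqref{directsum-c}) to produce a positive definite $N'$ with labeled graph $G_1^{L_1}\,\du\,\overline{K_{2r}}$, symplectic spectrum $\spspec(N_1)\cup\{d_1,\dots,d_r\}$, and the SSSP. The representative labeling of $\PP$ on $G$ may be chosen so that its restriction to the $2k$ vertices of $G_1$ is $L_1$, since symplectically equivalent labelings allow the same spectra. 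Because the $d_i$ were selected outside $\spspec(N_1)$ and pairwise distinct, the repetition present in $\spspec(N_1)$ survives in $\spspec(N')$.

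Finally, $G^L$ is a supergraph of $(G_1\,\du\,\overline{K_{2r}})^{L'}$ on the same $2p$ vertices, so Theorem \ref{supergraph} (the Supergraph Theorem) yields a matrix $N\in\symp(G^L)$ with $\spspec(N)=\spspec(N')$ (and the SSSP), which retains a non-simple symplectic eigenvalue. The only even mildly delicate step is bookkeeping on couplings and labelings in step two; no case presents a conceptual obstacle since each referenced example was engineered to deliver simultaneously a repeated symplectic eigenvalue and the SSSP, and the case-specific verification (in (a), that the pendant's partner is not the branching vertex matches the coupling of Example \ref{ex:paw}; in (b)--(d), automatic) is immediate.
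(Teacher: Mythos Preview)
Your proposal is correct and follows essentially the same route the paper indicates: build a small matrix on $G_1$ with a repeated symplectic eigenvalue and the SSSP (from the cited examples), extend by a diagonal block to $2p$ vertices via Theorem~\ref{directsum}, and then pass to $G^L$ with the Supergraph Theorem. The only discrepancy is that the paper cites Corollary~\ref{symplectic_disj_union}\eqref{Ssubgarph1}, whereas the argument you outline (and the one actually needed, since $G_1$ is merely a respected subgraph and not a connected component) is Corollary~\ref{symplectic_disj_union}\eqref{Ssubgarph2}; your citation is the apt one, and your explicit invocation of the Supergraph Theorem fills in what that corollary's proof leaves implicit.
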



\section{Coupled Zero Forcing}
\label{s:zeroforcing}

 Zero forcing and its variants such as positive definite and skew zero forcing  provide well-known  combinatorial upper bounds for the 
maximum nullities of the relevant types of  matrices described by  {a} graph, beginning with standard zero forcing  and {maximum nullity, or equivalently, maximum eigenvalue multiplicity for the standard} IEP-$G$ \cite{AIM08}  
(see  \cite[Chapter 9]{HLSbook} for discussion of variants).   In this section we define a new form of zero forcing for coupled graphs as an upper bound on the maximum multiplicity of a symplectic eigenvalue among matrices associated with the coupled graph.  We use coupled zero forcing and prior results to solve the ISEP-$G$ for several families of graphs. We also characterize coupled graphs that have coupled zero forcing number equal to one.

We begin by recalling the definition of the standard maximum nullity  and defining the maximum symplectic eigenvalue multiplicity   for a coupled graph.
Let $G$ be an unlabeled graph. The \emph{standard maximum nullity} of $G$ is $\M(G)=\max\{\null A \colon A\in\symm(G)\}$.
Since the nullity of $A-\lam I$ is the multiplicity of $\lam$ as an eigenvalue of $A$ and $\G(A-\lam I)=\G(A)$, $\M(G)$ is the maximum multiplicity of an eigenvalue of $A\in\symm(G)$.

\begin{defn}[Maximum symplectic multiplicity]
    Let $\GC G C$ be a coupled graph  and let $L$ be a representative labeling of $\GC G C$.  The \emph{maximum symplectic multiplicity of $\GC G C$}, denoted by $\MC(\GC G C )$, is the maximum multiplicity among the symplectic eigenvalues of matrices $N\in\symp(G^L)$. 
\end{defn}

Next we recall the definition of the standard zero forcing number and define the coupled zero forcing color change rule and number. Let $G$ be an unlabeled graph. The \emph{standard color change   rule} allows a blue vertex $v$ to force a white vertex $w$ to blue if $w$ is the only white vertex in $N(v)$. 
A set $B\subseteq V(G)$ of initially blue vertices is called a \emph{standard zero forcing set} of $G$ if repeated application of the standard zero forcing color change rule eventually colors every vertex of $G$ blue. The \emph{standard zero forcing number} of $G$, denoted by $\Z(G)$, is the minimum cardinality of a standard zero forcing set of $G$.

\begin{defn}[Coupled zero forcing] 
Let $\GC G C$ be a coupled graph, and suppose each vertex is colored either blue or white. The \emph{coupled zero forcing color change rule} consists of the following two operations:
\begin{enumerate}
    \item A blue vertex $v$ forces a white vertex $w$ to blue, denoted $v\to w$, if $w$ is the only white vertex in $N(v)\cup \{\pp(v)\}$.
    \item A white vertex $v$ forces itself to blue, denoted $v\to v$, if every vertex in $N(v)\cup \{\pp(v)\}$ is blue. 
\end{enumerate}
 A subset of  $V(G)$ of initially blue vertices is called a \emph{coupled zero forcing set} of $\GC G C$ if repeated application of the coupled zero forcing color change rule eventually colors every vertex of $\GC G C$ blue. It follows from known results about $\Z_\ell$ (see Remark \ref{ZCZell}) that the final coloring of a coupled zero forcing set is independent of the forces chosen. The \emph{coupled zero forcing number} of $\GC G C$, denoted by $\ZC(\GC G C)$, is the minimum cardinality of a coupled zero forcing set of $\GC G C$. 
\end{defn}

   The proof that $\ZC(\GC G C)$ provides an upper bound for $\MC(\GC G C)$   is  in some ways similar to the same relation between the standard zero forcing number and the maximum multiplicity, but there is also a fundamental difference, which is discussed in the remark following the proof.

\begin{thm}\label{prop_zer_forc_vs_mult}
    Let $\GC G C$ be a coupled graph. Then $\MC(\GC G C) \leq \ZC(\GC G C)$.
\end{thm}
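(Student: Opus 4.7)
The plan is to mimic the standard eigenvector-restriction proof of $\M(G)\le\Z(G)$ for the IEP-$G$, adapted to the coupled symplectic setting via Proposition~\ref{sympev}. Fix a representative labeling $L$ of $\GC G C$ and a matrix $N\in\symp(G^L)$ that attains $\MC(\GC G C)$, witnessed by a symplectic eigenvalue $\lam>0$ of multiplicity $m$, and let $B$ be a minimum coupled zero forcing set. By Proposition~\ref{sympev}\eqref{sympev-c} the complex subspace $W=\ker(\Omega N-\lam\sqrt{-1}\,I)\subseteq\C^{2p}$ has dimension exactly $m$, so it suffices to show that the $\C$-linear restriction $\pi\colon W\to\C^{|B|}$ sending $v\mapsto v|_B$ is injective; this will yield $m=\dim_\C W\le|B|=\ZC(\GC G C)$.

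The crux is the following structural remark: row $u$ of $\Omega N$ equals $\pm$ the row of $N$ indexed by $\pp(u)$ (sign $+$ for $u\le p$ and $-$ for $u>p$). Hence for $v\in W$ the coordinate equation of $\Omega N v=\lam\sqrt{-1}\,v$ at index $\pp(v)$ is a single linear relation whose support is exactly $\{v,\pp(v)\}\cup N(v)$, with all coefficients nonzero: $N_{v,v}\ne 0$ at $v$ by positive definiteness, $\pm\lam\sqrt{-1}$ at $\pp(v)$ because $\lam>0$, and $N_{v,j}\ne 0$ at each $j\in N(v)$ because $vj$ is an edge of $G^L$. In other words, one row of $\Omega N-\lam\sqrt{-1}\,I$ ``sees'' precisely the closed coupled neighborhood $\{v\}\cup(N(v)\cup\{\pp(v)\})$ inspected by the coupled color-change rule at $v$.

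To prove $\pi$ injective, suppose $v\in W$ with $v|_B=0$, and run the coupled zero forcing process starting from $B$. I will show by induction on the number of forces performed that every blue vertex $x$ satisfies $v_x=0$. For a Rule-1 force $v\to w$, the inductive hypothesis gives $v_v=0$ and $v_j=0$ for each $j\in(N(v)\cup\{\pp(v)\})\setminus\{w\}$, so the $\pp(v)$-equation collapses to a nonzero scalar times $v_w$, forcing $v_w=0$. For a Rule-2 self-force $v\to v$, the inductive hypothesis kills every term of the $\pp(v)$-equation except $N_{v,v}v_v$, forcing $v_v=0$. Since $B$ is a coupled zero forcing set, every vertex eventually becomes blue, so $v\equiv 0$ and $\pi$ is injective.

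The main conceptual obstacle is recognizing that the linear constraint one exploits at a vertex $v$ is the equation indexed by $\pp(v)$, not by $v$ itself; this asymmetry is precisely what causes the coupling partner $\pp(v)$ to appear in the color-change rule alongside $N(v)$. Once this correspondence between the row of $\Omega N-\lam\sqrt{-1}\,I$ at $\pp(v)$ and the color-change inspection set at $v$ is identified, with all coefficients verified nonzero, the remainder is the familiar zero-forcing bookkeeping used for the IEP-$G$.
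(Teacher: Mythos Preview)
Your proof is correct and follows essentially the same strategy as the paper's: the paper premultiplies the eigenvector equation by $\Omega$ to obtain $(\mu\Omega+N)\bx=0$ and then reads off row $v$ of $N'=\mu\Omega+N$, which is exactly the linear relation you extract from row $\pp(v)$ of $\Omega N-\lam\sqrt{-1}\,I$ (up to an overall sign). One small caveat: when $\pp(v)\in N_G(v)$ the coefficient at position $\pp(v)$ is $\pm N_{v,\pp(v)}-\lam\sqrt{-1}$ rather than purely $\pm\lam\sqrt{-1}$, but as a nonzero real plus a nonzero purely imaginary number it is still nonzero, so your argument goes through unchanged.
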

\begin{proof} 
    Let $N$ be a real positive definite matrix whose  labeled graph  is a representative labeling of $\GC G C$, and let $B$ be any coupled zero forcing set of $\GC G C$. 
    Suppose $\mu$ is an eigenvalue of $\Omega N$ with eigenvector $\bx=[x_i]$ such that $x_k = 0$ for all $k\in B$. 
     Since $(\mu I - \Omega N) \bx = 0$, multiplying on the left by $\Omega$ and using $\Omega^2 = -I$ yields $(\mu \Omega + N)\bx = 0$. Set $N' = \mu \Omega + N$ and $N'=[n'_{ij}]$; hence $\bx \in \ker(N')$. 
    Notice that, for every   vertex  label $v$, $n'_{vv} \neq 0$ and $n'_{v,\pp(v)} \neq 0$ since $N$ is a real positive definite matrix and  $\mu$ is a  nonzero  purely imaginary number.  Because an application of the coupled zero forcing color change rule corresponds to showing another entry in the null vector $\bx$ is zero, repeated application shows that  $\bx$ is the zero vector.  Thus, the kernel of $\mu I- \Omega N$ does not contain a nonzero vector with $x_k=0$ for all $k\in B$. Hence, the  multiplicity of $\mu$ as an eigenvalue of $\Omega N$ is at most $|B|$
    (see Lemma 2.13 of \cite{HLSbook}).   Since $\Omega N$ is real, its purely imaginary eigenvalues occur in complex conjugate pairs. Thus, by Proposition \ref{sympev}, the multiplicity of $\mu$ as an eigenvalue of $\Omega N$  equals the multiplicity of the symplectic eigenvalue $|\mu|$ of $N$. Since $\mu$ and $N$ are arbitrary, we have 
     $\MC(\GC G C) \leq \ZC(\GC G C)$. 
\end{proof}

\begin{rem}
    Note that  the proof given above is specific to the real numbers, because it relies on the fact that the sum of a real number and a nonzero purely imaginary number is necessarily nonzero. This is different from   the situation with standard maximum nullity and zero forcing, where $\M^F(G)\le\Z(G)$ regardless of the field $F$ in which the matrices reside,
    (whether $\M^F(G)$ can be interpreted as the maximum multiplicity of an eigenvalue rather than as maximum nullity {may}  vary with field, as {may}   the value of $\M^F(G)$ for a fixed graph $G$.)
\end{rem}

\begin{ex}\label{example_path_simple}
    Let us consider the path graph $P_{2p}$ with vertices  named  in path order, i.e., $v_j$ is adjacent to  $v_{j+1}$ for all $j \in \{1,\ldots,2p-1\}$. Consider the coupling ${\MM}= \{(v_1,v_2),(v_3,v_4),\ldots,$ $ (v_{2p-1},v_{2p})\}$. Then it is easy to see that $\{v_1\}$ is a coupled zero forcing set, and hence $\ZC(P_{2p}^{\MM}) = 1$. Theorem \ref{prop_zer_forc_vs_mult} implies that $\MC(P_{2p}^{\MM}) = 1$. Therefore, the path graph $P_{2p}$ with coupling $\MM$ requires all  symplectic eigenvalues to be simple. 
\end{ex}

In fact, coupled zero forcing on $G^\PP$ can be viewed as loop zero forcing on a graph $G(\PP)$ defined from $G$ and $\PP$. The {\em  loop zero forcing number} of a (simple unlabeled) graph $G$,  denoted by $\Zell(G)$, is  the zero forcing number defined by the $\Zell$-color change rule \cite{param}:
\ben\item If $u$ is blue and exactly one  neighbor $w$ of $u$  is white, then change the color of $w$ to blue.  
\item If $w$ is white,  $w$ is not an isolated vertex, and every neighbor of $w$ is blue, then change the color of $w$ to blue.
\een

\begin{defn}
Given a coupled graph $\GC G C$, define the simple unlabeled graph $G(\PP)$ by starting with $G$  and  adding an edge (if not already present) between $v$ and $\pp(v)$ for each pair of coupled vertices. 
\end{defn}
  
Observe that for any coupled graph $\GC G C$, the graph $G(\PP)$ always contains a perfect matching defined by the coupling, i.e., the edges  with endpoints $v$ and $\pp(v)$.   If $G$ has a perfect matching $M$, then $M$ naturally defines a coupling, called the \emph{coupling defined by $M$} and denoted by $\MM$, by choosing the pairs of endpoints of the edges in the matching  as the coupling.

\begin{rem}\label{ZCZell} {Since  $G(\PP)$  contains a perfect matching, it}  has no isolated vertices. Thus applying the coupled zero forcing color change rule to $\GC G C$ is  the same as applying the $\Zell$-color change rule to $G(\PP).$
    In particular, if $v$ and $\pp(v)$ are already adjacent in $G$ for every vertex $v$, then $G(\PP)=G$ { and $\Z_{C}(G)=\Zell(G)$}. 
\end{rem}

\begin{obs}\label{obs-delta-Zell}
    It is immediate from the definition that $\delta(G)\le \Zell(G) {\le \Z(G)}$.  { If $v$ and $\pp(v)$ are already adjacent in $G$ for every vertex $v$, then  $\delta(G) \le  \Z_{C}(G)\le \Z(G)$.}
\end{obs}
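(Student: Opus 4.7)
The plan is to dispatch each inequality directly from the definitions of the two forcing numbers involved, with the second sentence following from the first via Remark~\ref{ZCZell}.

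First I would handle $\Zell(G) \le \Z(G)$. The loop zero forcing color change rule is an enhancement of the standard rule: its first clause is exactly the standard color change rule, and its second clause only gives additional ways to turn vertices blue. Consequently, any standard zero forcing set is automatically a loop zero forcing set, so taking the minimum gives $\Zell(G)\le \Z(G)$.

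Next I would prove $\delta(G)\le \Zell(G)$ by examining the first force performed from a minimum loop zero forcing set $B$. If the first force uses clause~(1), then a blue vertex $u$ forces its unique white neighbor $w$, so $B$ must contain $u$ together with the remaining $\deg(u)-1$ neighbors of $u$, giving $|B|\ge \deg(u)\ge \delta(G)$. If the first force uses clause~(2), then a white non-isolated vertex $w$ forces itself because all of its neighbors are blue, so $B$ contains all $\deg(w)\ge \delta(G)$ neighbors of $w$. Either way $|B|\ge \delta(G)$.

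For the final sentence, the hypothesis that $v$ and $\pp(v)$ are adjacent in $G$ for every vertex $v$ means that the added edges in the construction of $G(\PP)$ are already present, so $G(\PP)=G$. By Remark~\ref{ZCZell} the coupled zero forcing color change rule on $\GC G C$ coincides with the $\Zell$-color change rule on $G$, yielding $\ZC(\GC G C)=\Zell(G)$, and the chain $\delta(G)\le \ZC(\GC G C)\le \Z(G)$ is then immediate from the first part. There is no real obstacle here; the only thing to be careful about is handling both clauses of the loop color change rule when establishing the lower bound by $\delta(G)$, since the self-forcing clause is what distinguishes $\Zell$ from standard zero forcing.
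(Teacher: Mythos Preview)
Your proposal is correct and follows exactly the line the paper has in mind; the paper simply declares the inequalities ``immediate from the definition'' without writing out the two-case analysis for the first force or the reduction via Remark~\ref{ZCZell}, all of which you have supplied accurately.
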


\begin{ex} 
    Let us consider the  cycle graph $C_{2p}$  with vertex names  in cycle order, i.e., $v_j$ is adjacent to $v_{j+1}$ for all $j \in \{1,\ldots,2p-1\}$ and $v_{2p}$ is adjacent to $v_1$. 
    Consider the coupling ${ \MM}=\{(v_1,v_2),(v_3,v_4),\ldots,(v_{2p-1},v_{2p})\}$.  Then  $C_{2p}(\MM)= C_{2p}$ and $2=\delta(C_{2p})\le \ZC (C_{2p}^{\MM}) =\Zell(C_{2p})\le \Z(C_{2p})= 2$, 
    and Theorem \ref{prop_zer_forc_vs_mult} implies that $ \MC(C_{2p}^\MM) \leq 2$. 
\end{ex}

   Next we apply coupled zero forcing to solve the ISEP-$G$ for some additional families of graphs.  The \emph{corona} of $H$ with $K_1$, denoted by $H\circ K_1$, is formed from a graph $H$ by joining a new vertex $u_v$ to $v$ for each vertex $v$ of $H$.    We begin by developing some additional tools for graphs of the form $H\circ K_1$.
Let $H$ be a graph of order $p$.  Observe that $H\circ K_1$ has a unique perfect matching in which each leaf $u_v$ is matched with its neighbor $v\in V(H)$. Let $M$ be a representative labeling of the coupling $\MM$ defined by the perfect matching in which the leaves $G$ are labeled $1,\dots,p$.

\begin{lem}\label{HcircK1}
     Let $H$ be a graph of order $p$ and let $\GC G M=(H\circ K_1)^{\MM}$. Then: 
     \ben[$(a)$]
     \item\label{HcircK1Z}   $\ZC(\GC G M)=\Zell(G)\le \Z(H)$.
     \item\label{HcircK1N}
    $N=\mtx{D & E\\E & A}\in \symp(G^{M})$ if and only if
    $
    A\in\sym(H^{M})$ {where $H^M$ has the labeling induced by the labeling of $G^M$}, $D$ and $E$ are diagonal matrices, $D$ is positive definite  and each standard eigenvalue of $\sqrt{D}A\sqrt{D}-E^2$
     is a positive real. 
    In this case, $\spspec(N)=\{\sqrt{\lam_1},\dots,\sqrt{\lam_p}\}$ where $\spec (\sqrt{D}A\sqrt{D}-E^2)=\{\lam_1,\dots,\lam_p\}$; note $\sqrt{D}A\sqrt{D} -E^2\in   \sym(H^M)$. 
   
    \item \label{HcircK1oml} If  $H$ allows  every standard spectrum with ordered multiplicity list $\oml$, then  the coupled graph $\GC G M=(H\circ K_1)^{\MM}$ allows  every symplectic spectrum with ordered multiplicity list $\oml$.\een
\end{lem}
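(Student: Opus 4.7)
My plan is to prove the three parts in order, letting (b) do the structural heavy lifting that (c) then invokes.

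For (a), because $\MM$ is defined by the unique perfect matching of $H\circ K_1$, every vertex $v$ is already adjacent to $\pp(v)$ in $G$, so $G(\MM)=G$ and Remark \ref{ZCZell} gives $\ZC(\GC G M)=\Zell(G)$ immediately. For the inequality $\Zell(G)\le\Z(H)$, I will take a minimum standard zero forcing set $B\subseteq V(H)$ and show that $B$, viewed inside $V(G)$, is a loop zero forcing set of $G$: whenever an $H$-vertex $v$ turns blue, its pendant leaf $u_v$ self-forces by $\Zell$-rule (2) (its only neighbor $v$ is blue and $u_v$ is not isolated); and any force $v\to w$ inside $H$ lifts verbatim to a $\Zell$-rule (1) force in $G$, because $v$'s only extra neighbor $u_v$ is already blue from the self-forcing step.

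For (b), I will first read off the block structure directly from the graph: the leaf-indices $\{1,\dots,p\}$ form an independent set and the only cross edges are the matching edges $\{i,p+i\}$, so the top-left block $D$ and the off-diagonal block $E$ must be diagonal (with nonzero diagonal entries in $E$ to realize the matching edges), and the bottom-right block $A$ must lie in $\sym(H^M)$; positive definiteness of $N$ then forces $D\succ 0$. Applying the Schur complement to $N$ with positive definite leading block $D$ yields $N\succ 0\iff A-ED^{-1}E\succ 0$, and congruence by $\sqrt{D}$, together with the commutativity of the diagonal matrices $D$ and $E$, converts this to $\sqrt{D}A\sqrt{D}-E^2\succ 0$. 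For the symplectic spectrum I will compute $(\Omega N)^2$ in block form; the commutativity $DE=ED$ kills the lower-left block, leaving a block-triangular matrix with diagonal blocks $E^2-AD$ and $E^2-DA$, each similar to $E^2-\sqrt{D}A\sqrt{D}$ via conjugation by $\sqrt{D}^{\pm 1}$. Proposition \ref{sympev} then identifies the symplectic eigenvalues of $N$ as $\sqrt{\lam_j}$ where $\{\lam_j\}=\spec(\sqrt{D}A\sqrt{D}-E^2)$, and positivity of the $\lam_j$ matches exactly the Schur-complement condition for positive definiteness.

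For (c), I will use (b) as a realization device: given distinct positive target values $\lam_1<\cdots<\lam_q$ with multiplicity list $\oml=(m_1,\dots,m_q)$, set $D=I$ and $E=\epsilon I$ for any $\epsilon>0$, and invoke the hypothesis on $H$ to produce $A\in\sym(H^M)$ with ordinary spectrum $\{(\lam_1^2+\epsilon^2)^{(m_1)},\dots,(\lam_q^2+\epsilon^2)^{(m_q)}\}$; part (b) then certifies that $N=\mtx{I & \epsilon I\\ \epsilon I & A}$ lies in $\symp(G^M)$ with the desired symplectic spectrum $\{\lam_j^{(m_j)}\}$. The main obstacle I anticipate is the bookkeeping in (b), namely checking simultaneously the Schur-complement equivalence for positive definiteness and the similarity computation for $(\Omega N)^2$; both hinge on $D,E$ being diagonal (hence $DE=ED$ and both commuting with $\sqrt{D}$), and presenting these conjugations cleanly without long algebraic detours requires some care. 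Parts (a) and (c) are short once this dictionary is in place.
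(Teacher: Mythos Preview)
Your proposal is correct. Parts (a) and (c) match the paper's argument essentially verbatim (the paper takes $D=I$, $E=I$ in (c), which is your $\epsilon=1$).

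Part (b) is where you and the paper diverge. The paper produces a single symplectic matrix
\[
S=\begin{bmatrix} D^{-1/2} & -D^{-1/2}E\\ O & D^{1/2}\end{bmatrix}
\]
and computes $S^\top N S = I_p \oplus (\sqrt{D}A\sqrt{D}-E^2)$; this one congruence simultaneously certifies positive definiteness (since $S^\top N S\succ 0$ iff the second block is positive definite) and pins down the symplectic spectrum (since symplectic congruence preserves $\spspec$, and the block-diagonal form is handled directly via the characteristic polynomial of $\Omega S^\top NS$). Your route separates these two tasks: Schur complement plus congruence by $\sqrt{D}$ for the positive-definiteness equivalence, and a direct block computation of $(\Omega N)^2$, which is block upper triangular with diagonal blocks $E^2-AD$ and $E^2-DA$, each similar to $-(\sqrt{D}A\sqrt{D}-E^2)$, to read off the symplectic eigenvalues via Proposition~\ref{sympev}. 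Both are valid; the paper's approach is more economical since one symplectic $S$ does double duty, while yours is more elementary in that it avoids having to discover the right $S$ and relies only on Schur complements and similarity. One small point worth making explicit in your write-up: the passage from $\spec((\Omega N)^2)$ to $\spspec(N)$ uses that the eigenvalues of $\Omega N$ are already known to be purely imaginary conjugate pairs (Proposition~\ref{sympev}), so the multiset $\{-\lam_j,-\lam_j\}$ for $(\Omega N)^2$ forces $\spspec(N)=\{\sqrt{\lam_j}\}$ with the correct multiplicities.
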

\bpf
\eqref{HcircK1Z}: From the definition of $\MM$, we see that $G(\MM)=G$, so $\ZC(\GC G M)=\Zell(G)$. Any standard zero forcing set for $H$ is   a $\Zell$-forcing set for $G$, so $\Zell(G)\le \Z(H)$.

\eqref{HcircK1N}: 
Let $N=\mtx{D & E\\E & A}$ be a  symmetric matrix  where $D$ and $E$ are  $p\times p$ diagonal matrices, and $A$ is a symmetric $p\x p$ matrix. It is immediate that 
$\G^{L}(N)=(H\circ K_1)^{M}$ if and only if $\G^L(A)=H^{M}$, and that if $N$ is positive definite 
then so is $D$. So we may assume that $D$ is positive definite and $\G^L(A)=H^{M}$.

Note that
$S=\begin{bmatrix}
D^{-1/2} &-D^{-1/2}E \\
O & D^{1/2} 
\end{bmatrix}
$
is  symplectic and 
$S\trans NS =\begin{bmatrix}
I  & O_p\\
O_p & \sqrt{D}A\sqrt{D}-E^2
\end{bmatrix}.$ 
Thus $N$ is positive definite if and only if each (standard) eigenvalue of $\sqrt{D}A\sqrt{D}-E^2$ is a positive real,  and 
in this case $\spspec(N)=\spspec(S\trans N S)$.

We now assume that $N$ is positive definite, 
and compute $\spspec{(S\trans NS)}$ by determining 
the (standard) spectrum of  $\Omega_{2p} S\trans NS$.
Note that 
\begin{align*}
\det(xI_{2p}-\Omega_{2p}S\trans NS )
&= \det \begin{bmatrix} xI_p & -\sqrt{D}A\sqrt{D} +E^2 \\
I & xI_p \end{bmatrix}\\
&=
\det\lp x^2I_p+(\sqrt{D}A\sqrt{D} -E^2)\rp.\end{align*}
Hence, the symplectic eigenvalues of $S\trans NS$ (and hence of $N$) are $ \lambda_i\sqrt{-1}$ where 
$\lambda_1, \ldots, \lambda_p$ are the standard eigenvalues of $\sqrt{D}A\sqrt{D} -E^2$.

\eqref{HcircK1oml}: Assume $H$ allows  every standard spectrum with ordered multiplicity list $\oml =(m_1,\dots,m_q)$. Given  positive real numbers $\nu_1<\dots<\nu_q$,  we can choose a matrix $A\in \sym(H)$ such that  the (standard) eigenvalues of $A$ are $\nu_i^2+1$ with multiplicity $m_i$ for $i=1,\dots,q$.  For $N=\mtx{I & I\\I & A}\in \symp(G^{M})$,  the symplectic eigenvalues of $N$ are  $\nu_i$ with multiplicity $m_i$ for $i=1,\dots,q$. by \eqref{HcircK1N}. 
\epf

\begin{prop}
\label{HcircK1MZ}
    Let $H$ be a graph  that  allows  every standard spectrum with multiplicity at most $\M(H)$. Then the coupled graph $\GC G M=(H\circ K_1)^{\MM}$ allows  every symplectic spectrum with multiplicity at most $\M(H)$.
    If in addition $\M(H)=\Z(H)$, then\[\M(H)=\MC(G^{\MM})=\ZC(G^{\MM})=\Zell(G)=\Z(H).\]
\end{prop}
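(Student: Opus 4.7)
The plan is to prove the two assertions in turn: the realization claim by a direct application of Lemma \ref{HcircK1}(b), in the spirit of its part (c), and the equality chain by combining the established bounds among $\MC$, $\ZC$, $\Zell$, and $\Z$ with that realization.

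For the first claim, I would start with an arbitrary multiset $\{\nu_1,\dots,\nu_p\}$ of positive reals in which no value has multiplicity exceeding $\M(H)$. Since $x\mapsto x^2+1$ is injective on $(0,\infty)$, the multiset $\{\nu_1^2+1,\dots,\nu_p^2+1\}$ has the same multiplicity list, so its maximum multiplicity is still at most $\M(H)$. The hypothesis on $H$ then produces a matrix $A\in\sym(H)$ whose standard spectrum is $\{\nu_1^2+1,\dots,\nu_p^2+1\}$; since the IEP-$G$ is invariant under relabeling, I may regard $A$ as lying in $\sym(H^{M})$ for the labeling induced by $G^{M}$. Taking $D=E=I$ and $N=\mtx{I & I\\ I & A}$, Lemma \ref{HcircK1}(b) yields $N\in\symp(G^{M})$ with $\spspec(N)=\{\nu_1,\dots,\nu_p\}$, completing the realization with the prescribed multiplicities.

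For the second claim, assume in addition that $\M(H)=\Z(H)$. Lemma \ref{HcircK1}(a) supplies $\ZC(G^{\MM})=\Zell(G)\le\Z(H)$, while Theorem \ref{prop_zer_forc_vs_mult} gives $\MC(G^{\MM})\le\ZC(G^{\MM})$. Applying the first part of this proposition to a multiset that actually witnesses the multiplicity $\M(H)$ yields $\M(H)\le\MC(G^{\MM})$. Chaining these,
\[ \M(H)\le\MC(G^{\MM})\le\ZC(G^{\MM})=\Zell(G)\le\Z(H)=\M(H), \]
which forces equality throughout and delivers the desired string of equalities.

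There is no genuine obstacle; the argument is a short synthesis of Lemma \ref{HcircK1} and Theorem \ref{prop_zer_forc_vs_mult}. The only point that warrants a moment's care is the bookkeeping step that lets the unlabeled IEP-$G$ hypothesis on $H$ feed into the specifically labeled matrix required by Lemma \ref{HcircK1}(b), which is immediate from the labeling-invariance discussion for the IEP-$G$ reviewed earlier in the paper.
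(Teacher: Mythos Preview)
Your proof is correct and follows essentially the same approach as the paper. The only cosmetic difference is that the paper invokes Lemma~\ref{HcircK1}\eqref{HcircK1oml} directly for the first claim, whereas you unpack its content by applying Lemma~\ref{HcircK1}\eqref{HcircK1N} with $D=E=I$; the chain of inequalities in the second claim is identical to the paper's.
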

\bpf 
Since $H$ allows every ordered multiplicity list such that the maximum multiplicity is at most $\M(H)$, the first statement follows from Lemma \ref{HcircK1}\eqref{HcircK1oml}.  
This implies $\MC(G^{\MM})\ge \M(H)$. Since  $\MC(G^{\MM})\le \ZC(G^{\MM})$ by Theorem \ref{prop_zer_forc_vs_mult} and $\ZC(G^{\MM})=\Zell(G)\le \Z(H)$  by Lemma \ref{HcircK1}\eqref{HcircK1Z},  the hypothesis $\M(H)=\Z(H)$ completes the proof.
\epf

The solutions to  the ISEP-$G$ for graphs of the form  $P_p\circ K_1$ (called  \emph{combs}) and for graphs of the form  $K_p\circ K_1$  are now immediate from Proposition \ref{HcircK1MZ} and the following well-known facts: A path order $p$ allows as a spectrum every set of $p$ distinct real numbers (see \cite{Hochstadt67a} or \cite[Theorem 1.15]{HLSbook}).
The complete graph $K_p$ allows as a spectrum every multi-set of $p$ real numbers that contains at least two distinct numbers  (see \cite{BLMNSSSY13} or \cite[Proposition 1.11]{HLSbook}).

\begin{cor}\label{PpcircK1}
     The coupled graph $(P_p\circ K_1)^{\MM}$ allows  every distinct symplectic spectrum and does not allow a multiple symplectic eigenvalue.  Furthermore, $\M(P_p)=\MC((P_p\circ K_1)^{\MM})=\ZC((P_p\circ K_1)^{\MM})=\Zell(P_p\circ K_1)=\Z(P_p)=1$.
\end{cor}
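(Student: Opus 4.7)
The plan is to apply Proposition~\ref{HcircK1MZ} with $H=P_p$; the work then reduces to verifying its two hypotheses for the path: $\M(P_p)=\Z(P_p)$, and $P_p$ allows every standard spectrum whose maximum multiplicity is at most $\M(P_p)$.

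First I would recall the two classical facts about $P_p$ that are cited in the discussion preceding the corollary. Any $A\in\sym(P_p)$ is an unreduced (irreducible) tridiagonal symmetric matrix, and such a matrix always has pairwise distinct eigenvalues; hence $\M(P_p)=1$. The Hochstadt theorem says that every set of $p$ distinct real numbers is realized as the spectrum of some $A\in\sym(P_p)$, so $P_p$ allows every standard spectrum with multiplicity at most $1=\M(P_p)$. A single endpoint of $P_p$ forces the whole path under the standard color-change rule, so $\Z(P_p)=1=\M(P_p)$, verifying both hypotheses.

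Invoking Proposition~\ref{HcircK1MZ} then yields at once
\[
\M(P_p)=\MC((P_p\circ K_1)^{\MM})=\ZC((P_p\circ K_1)^{\MM})=\Z(P_p)=1,
\]
and that $(P_p\circ K_1)^{\MM}$ realizes every symplectic spectrum with maximum multiplicity at most $\M(P_p)=1$, i.e., every symplectic spectrum consisting of $p$ distinct positive reals. Since the maximum symplectic multiplicity is $1$, no matrix in $\symp((P_p\circ K_1)^M)$ has a repeated symplectic eigenvalue. The remaining equality $\Zell(P_p\circ K_1)=\ZC((P_p\circ K_1)^{\MM})$ is recorded in Lemma~\ref{HcircK1}\eqref{HcircK1Z} and is already packaged inside Proposition~\ref{HcircK1MZ}.

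There is essentially no obstacle here: the statement is a direct consequence of the framework already developed. The only place requiring a moment's care is recognizing that both halves of the conclusion---``allows every distinct symplectic spectrum'' and ``does not allow a multiple symplectic eigenvalue''---follow from the single equality $\MC((P_p\circ K_1)^{\MM})=1$ combined with the realizability half of Proposition~\ref{HcircK1MZ}.
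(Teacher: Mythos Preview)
Your argument is correct and matches the paper's own approach exactly: the corollary is stated as immediate from Proposition~\ref{HcircK1MZ} together with the classical facts $\M(P_p)=\Z(P_p)=1$ and that $P_p$ realizes every set of $p$ distinct real numbers as a spectrum. Your additional remark that the $\Zell$ equality is already contained in the chain of Proposition~\ref{HcircK1MZ} (via Lemma~\ref{HcircK1}\eqref{HcircK1Z}) is also on point.
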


\begin{cor}\label{KpcircK1}
     The coupled graph $(K_p\circ K_1)^{\MM}$ allows  every symplectic spectrum with at least two distinct symplectic eigenvalues and does not allow a sympPD matrix.  Furthermore, $\M(K_p)=\MC((K_p\circ K_1)^{\MM})=\ZC((K_p\circ K_1)^{\MM})=\Zell(K_p\circ K_1)=\Z(K_p)=p-1$.
\end{cor}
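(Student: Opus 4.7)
The plan is to obtain Corollary \ref{KpcircK1} as a direct specialization of Proposition \ref{HcircK1MZ} to $H = K_p$, together with two classical facts that are already cited in the paragraph preceding the corollary: $K_p$ allows as a standard spectrum every multi-set of $p$ real numbers containing at least two distinct values, and $\M(K_p) = \Z(K_p) = p-1$. Taken together, these say exactly that $K_p$ allows every standard spectrum whose maximum multiplicity is at most $\M(K_p)$, which is the hypothesis of Proposition \ref{HcircK1MZ}, and that the additional equality $\M(K_p)=\Z(K_p)$ required for its last assertion also holds.

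With $H = K_p$ in Proposition \ref{HcircK1MZ}, the first half gives that $(K_p\circ K_1)^{\MM}$ allows every symplectic spectrum whose maximum multiplicity is at most $\M(K_p) = p-1$, i.e., every multi-set of $p$ positive reals that contains at least two distinct values. The second half of the proposition supplies the full chain
\[
\M(K_p)=\MC((K_p\circ K_1)^{\MM})=\ZC((K_p\circ K_1)^{\MM})=\Zell(K_p\circ K_1)=\Z(K_p)=p-1.
\]

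It remains to rule out a sympPD matrix in $\symp((K_p\circ K_1)^{M})$, where $M$ is a representative labeling of $\MM$. The cleanest way is to observe that a sympPD matrix in this class would have all $p$ symplectic eigenvalues equal, i.e., symplectic multiplicity $p$, contradicting $\MC((K_p\circ K_1)^{\MM}) = p-1 < p$ from the chain just established. Alternatively, via Lemma \ref{HcircK1}\eqref{HcircK1N}, any sympPD realization would force $\sqrt{D}A\sqrt{D} - E^2$ to be a positive scalar multiple of $I$; but since $A \in \sym(K_p^M)$ has every off-diagonal entry nonzero, and $D$ is a positive diagonal while $E^2$ is diagonal, $\sqrt{D}A\sqrt{D} - E^2$ inherits nonzero off-diagonal entries and cannot be a scalar matrix.

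No step is a genuine obstacle; the main point to handle carefully is translating ``maximum multiplicity at most $\M(K_p) = p-1$'' into ``at least two distinct symplectic eigenvalues,'' and then noting that this same inequality $p-1<p$ precisely excludes the sympPD case. The classical facts on $\M(K_p)$ and $\Z(K_p)$ are the only external inputs and are already referenced in the text above the corollary.
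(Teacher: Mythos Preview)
Your proposal is correct and matches the paper's approach: the paper states that the corollary is immediate from Proposition \ref{HcircK1MZ} together with the cited facts about $K_p$, and you have spelled out exactly that deduction. Your explicit handling of the ``does not allow a sympPD matrix'' clause via the inequality $\MC=p-1<p$ (or the alternative off-diagonal argument through Lemma \ref{HcircK1}\eqref{HcircK1N}) fills in a detail the paper leaves implicit, but the route is the same.
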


\begin{thm}{\rm \cite{Ferg80, FF09}} A multiset of real numbers $\lambda_1\le \lambda_2\le \dots\le \lambda_n$ is the spectrum of an $n\x n$  matrix  $A\in\symm(C_n)$  
if and only if
one of the following two conditions holds:
\ben[$(a)$]
\item  $\lambda_{n}>\lambda_{n-1}\ge \lambda_{n-2} > \lambda_{n-3} \ge\cdots $. 
\item $\lambda_{n}\ge \lambda_{n-1}> \lambda_{n-2} \ge \lambda_{n-3} >\cdots $.
\een
\end{thm}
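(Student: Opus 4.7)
My plan is to prove each direction of this spectral characterization separately, following the general strategy of Ferguson and Fiedler--Ferguson.

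For necessity, let $A\in\symm(C_n)$ with vertices labeled in cycle order, and set $T = A[\{1,\ldots,n-1\}]$. Deleting a vertex of $C_n$ leaves a path $P_{n-1}$, so $T$ is an irreducible Jacobi (symmetric tridiagonal with nonzero off-diagonals) matrix. Classical theory then gives that $T$ has $n-1$ simple eigenvalues $\mu_1 < \mu_2 < \cdots < \mu_{n-1}$ and that every eigenvector of $T$ has nonzero first and last components. Cauchy's interlacing theorem yields $\lambda_i \le \mu_i \le \lambda_{i+1}$ for each $i$; this already limits each eigenvalue of $A$ to multiplicity at most $2$, and a double eigenvalue can occur only at some $\mu_i$. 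To pin down which $\mu_i$'s give doubles, I will expand
\[
\det(xI-A) \;=\; (x-a_n)\det(xI-T) \;-\; u^\top\operatorname{adj}(xI-T)\,u,
\]
where $u\in\R^{n-1}$ records the couplings of vertex $n$ to $T$ (nonzero only in positions $1$ and $n-1$). At $x=\mu_i$ the adjugate is a nonzero multiple of $v^{(i)}(v^{(i)})^\top$, where $v^{(i)}$ is the unit eigenvector of $T$ at $\mu_i$, so whether $\mu_i$ is an eigenvalue of $A$ is governed by the linear combination $b_n v^{(i)}_1 + b_{n-1} v^{(i)}_{n-1}$. A standard Sturm-sequence argument shows that the signs of $v^{(i)}_1$ and of $v^{(i)}_{n-1}$ alternate in $i$, forcing the indices of those $\mu_i$'s that are promoted to eigenvalues of $A$ to have a single parity. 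This parity dichotomy is precisely the distinction between patterns (a) and (b).

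For sufficiency, given $\lambda_1\le\cdots\le\lambda_n$ satisfying (a) or (b), I will first choose interlacing auxiliary values $\mu_1<\cdots<\mu_{n-1}$ with $\mu_i$ strictly between the simple $\lambda$'s and equal to each designated double, and invoke the classical Hochstadt inverse theorem to produce an irreducible Jacobi matrix $T$ whose spectrum is $\{\mu_i\}$. I will then construct $A$ by bordering $T$ with a new row/column of entries $a_n,b_{n-1},b_n$: the bordering identity above becomes a small system of conditions in the unknowns $b_{n-1}^2$, $b_n^2$, and $a_n$, with the requirement $b_n v^{(i)}_1 + b_{n-1} v^{(i)}_{n-1}=0$ at each prescribed double location. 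A dimension count plus the Sturm sign data from the necessity half shows these conditions admit a real solution with $b_{n-1},b_n\ne 0$, and the remaining overall sign of the product $b_1 b_2\cdots b_n$ (the only $\symm(C_n)$-invariant sign, after the action of signed diagonal conjugation) toggles between cases (a) and (b).

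The main obstacle will be the sign/parity bookkeeping in both directions: translating the alternation of signs of $v^{(i)}_1$ and $v^{(i)}_{n-1}$ into the single-parity constraint on double eigenvalues, and then, in the construction, choosing the sign of $b_1\cdots b_n$ so that the resulting characteristic polynomial matches the prescribed list exactly rather than the opposite parity case. A secondary technical point is verifying that the interlacing data $(\mu_i)$ needed to feed Hochstadt's theorem can always be chosen inside the gaps dictated by (a) or (b); this is straightforward when all $\lambda_j$ are simple and requires only that the double-eigenvalue $\mu_i$'s be spaced as alternation allows, which is exactly the content of the theorem's hypothesis.
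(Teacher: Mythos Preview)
The paper does not prove this theorem; it is quoted from \cite{Ferg80, FF09} as a known result and used only to derive the subsequent corollary about $C_p\circ K_1$. So there is no ``paper's own proof'' to compare against, and what you have written is a sketch of (a version of) the original arguments in those references.

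That said, a few comments on your outline. The necessity half is essentially right: deleting a vertex of $C_n$ yields an irreducible Jacobi matrix $T$, Cauchy interlacing caps multiplicities at $2$, and the parity restriction on which $\mu_i$ can coincide with an eigenvalue of $A$ does come from the sign alternation of the first and last components of the eigenvectors of $T$. One point you should make explicit is that the sign of the product $v^{(i)}_1 v^{(i)}_{n-1}$ is $(-1)^{i-1}$ (or $(-1)^{n-i}$, depending on conventions); it is this single sign, not the two signs separately, that governs whether $b_n v^{(i)}_1 + b_{n-1} v^{(i)}_{n-1}$ can vanish, and it is what ties the parity of the doubled indices to the sign of $b_1\cdots b_n$.

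The sufficiency half is the part that is thinnest as written. Invoking Hochstadt to realize the $\mu_i$'s as the spectrum of \emph{some} Jacobi $T$ is not enough: once $T$ is fixed, the bordering equations you write down over-determine $(a_n,b_{n-1},b_n)$ in general, because you must match $n$ prescribed eigenvalues with only three free border parameters. Ferguson's actual construction does not pick $T$ first and then border; it works with the full periodic Jacobi parameterization (or equivalently uses both spectra $\{\lambda_j\}$ and $\{\mu_i\}$ simultaneously in the inverse problem) so that all of the $a_i,b_i$ are determined together. Your plan would need to be amended to use the two-spectra inverse Jacobi theorem rather than the single-spectrum version, after which the remaining freedom is exactly the sign of $b_1\cdots b_n$, which selects between cases (a) and (b) as you indicate.
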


To align with the results we wish to apply, we restate the previous theorem in terms of ordered multiplicity lists.

\begin{cor}  For a strictly increasing ordered set $\Lambda$ of $q$ real numbers  and 
 ordered multiplicity list $\oml=(m_1,\dots,m_q)$ where each $m_i$ equals one or two
 and  $n=\sum_{i=1}^q m_i$,  there exists $A\in\symm(C_n)$ with $\spec(A)=\Lambda$ if and only if $\sum_{i=k}^{k'}m_i$ is even
 for any $1\leq k< k'\leq q$ such that $m_k=m_{k'}=2$.
 
\end{cor}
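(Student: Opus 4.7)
The plan is to derive this restatement directly from the Ferguson--FF theorem above, by translating its alternating inequality patterns (a) and (b) into parity statements about cumulative multiplicities. Given $\Lambda=\{\lambda_1<\cdots<\lambda_q\}$ and $\oml=(m_1,\ldots,m_q)$ with $m_i\in\{1,2\}$, I would let $\mu_1\le\mu_2\le\cdots\le\mu_n$ be the non-decreasing eigenvalue list obtained by listing $\lambda_j$ with multiplicity $m_j$, and set $P_j=\sum_{i=1}^{j}m_i$ and $N_j=\sum_{i>j}m_i=n-P_j$. Then an equality $\mu_\ell=\mu_{\ell+1}$ occurs exactly at $\ell=P_j-1$ for each index $j$ with $m_j=2$.

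Next I would translate each Ferguson--FF pattern into a parity condition on the $N_j$. Reading pattern (a) from the top, the inequalities are $>,\ge,>,\ge,\ldots$, so equality $\mu_\ell=\mu_{\ell+1}$ is permitted exactly when $n-\ell$ is even; substituting $\ell=P_j-1$ gives $N_j+1$ even, i.e., $N_j$ odd. Pattern (b) reads $\ge,>,\ge,>,\ldots$ and by the same substitution demands $N_j$ even whenever $m_j=2$. Consequently, Ferguson--FF guarantees existence of $A\in\symm(C_n)$ with $\spec(A)=\Lambda$ and the prescribed multiplicities if and only if all $N_j$ with $m_j=2$ share a common parity (all odd, for case (a); all even, for case (b)).

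It remains to recognise the corollary's condition as exactly this common-parity statement. For indices $k<k'$ with $m_k=m_{k'}=2$, one computes
\[\sum_{i=k}^{k'}m_i = 2 + (N_k - N_{k'}),\]
so $\sum_{i=k}^{k'}m_i$ is even if and only if $N_k\equiv N_{k'}\pmod 2$. Hence the pairwise evenness condition over all such pairs is equivalent to all relevant $N_j$ sharing a single parity, which is the disjunction of the two Ferguson--FF cases. The edge cases of zero or exactly one multiplicity-$2$ index are immediate: the corollary's condition is then vacuous, and patterns (a), (b) together accommodate any placement of at most one equality.

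I expect no significant obstacle beyond careful parity bookkeeping — in particular, keeping straight which of $P_j-1$, $n-\ell$, $n-P_j+1$, and $n-P_j$ governs which Ferguson--FF pattern. Once the single substitution $\ell=P_j-1$ is made and the identity $\sum_{i=k}^{k'}m_i=2+(N_k-N_{k'})$ is recorded, both directions of the ``iff'' follow mechanically by invoking the Ferguson--FF theorem, with the forward direction extracting the common parity from the existence of $A$ and the reverse direction using the common parity to select pattern (a) or (b) and then realising $A$.
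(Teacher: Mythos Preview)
Your proposal is correct and matches the paper's approach: the paper offers no proof at all, simply announcing the corollary as a restatement of the Ferguson--FF theorem in ordered-multiplicity-list language, and your argument supplies precisely the parity bookkeeping that makes this restatement rigorous. The key identifications---that equalities $\mu_\ell=\mu_{\ell+1}$ occur exactly at $\ell=P_j-1$ when $m_j=2$, that patterns (a)/(b) force a common parity on the $N_j$, and that $\sum_{i=k}^{k'}m_i=2+(N_k-N_{k'})$---are all correct and complete the equivalence.
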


\begin{cor}\label{CpcircK1}
The coupled graph $(C_p\circ K_1)^{\MM}$ allows  every symplectic spectrum that can be described by an 
 ordered multiplicity list $\oml=(m_1,\dots,m_q)$  where each $m_i$ equals one or two,
 $p=\sum_{i=1}^q m_i$, and
   $\sum_{i=k}^{k'}m_i$ is even for any $1\leq k< k'\leq q$ such that $m_k=m_{k'}=2$. No other symplectic spectra are allowd.
 Furthermore, $\M(C_p)=\MC((C_p\circ K_1)^{\MM})=\ZC((C_p\circ K_1)^{\MM})=\Zell(C_p\circ K_1)=\Z(C_p)=2$.
 \end{cor}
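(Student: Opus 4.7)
The plan is to reduce both directions of the spectral characterization to the standard IEP for cycles via Lemma \ref{HcircK1}, then invoke the Ferguson corollary preceding this statement together with the classical values $\M(C_p)=\Z(C_p)=2$. Let $M$ denote a representative labeling of $\MM$ as in Lemma \ref{HcircK1}. For the forward direction, fix an ordered multiplicity list $\oml=(m_1,\dots,m_q)$ satisfying the evenness condition and arbitrary strictly increasing positive reals $\nu_1<\dots<\nu_q$. Since $\nu_1^2+1<\dots<\nu_q^2+1$ is strictly increasing and $\oml$ satisfies the evenness condition, Ferguson's corollary produces $A\in\sym(C_p^M)$ (with the labeling induced on $C_p$) having eigenvalues $\nu_i^2+1$ with multiplicity $m_i$. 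Setting $D=E=I$ in Lemma \ref{HcircK1}(b), the matrix $N=\mtx{I & I\\I & A}$ lies in $\symp((C_p\circ K_1)^{M})$, and since $\sqrt D A\sqrt D-E^2=A-I$ has eigenvalues $\nu_i^2$ with multiplicity $m_i$, we obtain $\spspec(N)=\{\nu_i\text{ with multiplicity } m_i\}$.

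For the converse, suppose $N\in\symp((C_p\circ K_1)^M)$ has symplectic eigenvalues $\mu_1,\dots,\mu_p$. By Lemma \ref{HcircK1}(b), $N=\mtx{D & E\\E & A}$ with $B:=\sqrt D A\sqrt D-E^2\in\sym(C_p^M)$ and $\spec(B)=\{\mu_1^2,\dots,\mu_p^2\}$. Since $\mu_i>0$ for every $i$, squaring is order- and multiplicity-preserving on the symplectic spectrum, so the ordered multiplicity lists of $\{\mu_i\}$ and $\{\mu_i^2\}$ coincide. Because $B\in\sym(C_p^M)$, each multiplicity is at most $\M(C_p)=2$, and Ferguson's corollary forces the common list to satisfy the evenness condition.

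For the numerical chain, $\M(C_p)=\Z(C_p)=2$ are classical. Lemma \ref{HcircK1}(a) gives $\ZC((C_p\circ K_1)^{\MM})=\Zell(C_p\circ K_1)\le\Z(C_p)=2$, and Theorem \ref{prop_zer_forc_vs_mult} gives $\MC\le\ZC$. For the reverse inequality, the list $(1,\dots,1,2)$ vacuously satisfies the evenness condition, so the forward direction realizes a symplectic eigenvalue of multiplicity $2$, yielding $\MC\ge 2$. The main subtlety I expect is that $C_p$ does not allow every ordered multiplicity list whose entries are bounded by $\M(C_p)$, so Proposition \ref{HcircK1MZ} does not apply directly; the chain of equalities must instead be assembled piece by piece as above, using the forward direction of the spectral characterization to furnish the lower bound $\MC\ge 2$.
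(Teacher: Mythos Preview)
Your proof is correct and matches the paper's intended argument. The paper states this corollary without explicit proof, but the approach you use---applying Lemma~\ref{HcircK1}\eqref{HcircK1N} in both directions to translate between symplectic spectra of $(C_p\circ K_1)^{\MM}$ and standard spectra of matrices in $\sym(C_p)$, then invoking the Ferguson corollary---is precisely what the paper has in mind, as made explicit in the remark immediately following (Remark~\ref{r:HcircK1}); your observation that Proposition~\ref{HcircK1MZ} does not apply directly and that the chain of equalities must be assembled from Lemma~\ref{HcircK1}\eqref{HcircK1Z}, Theorem~\ref{prop_zer_forc_vs_mult}, and the existence of a multiplicity-two symplectic eigenvalue is also on target.
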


Corollaries \ref{PpcircK1}, \ref{KpcircK1} and \ref{CpcircK1} are only specific examples of a more general situation, described next. 

\begin{rem}\label{r:HcircK1}  Let $H$ is a graph for which the IEP-$G$ is solved
 by specifying which ordered multiplicity lists are permitted and all spectra are allowed for these lists.  Then  the ISEP-$G$ is solved by Lemma \ref{HcircK1} for $G=(H\circ K_1)^\MM$  where $\MM$  couples each leaf with its unique neighbor
 :  The ordered symplectic multiplicty lists allowed for $(H\circ K_1)^\MM$ are exactly those allowed for $H$ and all possible ordered sets of positive real numbers are allowed with these multiplicities. \end{rem}

 A coupled graph $\GC G C$ is \emph{connected} if $G$ is connected.  A \emph{caterpillar} is a tree  $T$ that has a path {$P$} (called a \emph{central path})
 $P$ such that every other  vertex of $T$ is  adjacent to a vertex of $P$.  
A tree $T$ has at most one  perfect matching, and whether or not $T$ has a perfect matching can be determined by starting with a {leaf}, matching it {with its unique neighbor}, removing both matched vertices from $T$, and continuing in this manner.  A perfect matching is found if and only if the process ends with no vertices \cite{IMA10}.

\begin{thm}\label{t:ZC1} Let $G$ be  a  graph.  Then $\Zell(G)=1$ if and only if $G$ is a caterpillar. 
Furthermore, $\ZC(\GC G C)=1$ if and only $G(\PP)$ is a caterpillar that contains a perfect matching.
 If $G$ is connected, then  $\ZC(\GC G C)=1$ if and only $G$ is a caterpillar that contains a perfect matching $M$ and $\PP$ is the coupling defined by $M$.
\end{thm}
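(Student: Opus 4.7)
The substantive claim is the first, $\Zell(G)=1$ iff $G$ is a caterpillar; the other two follow via Remark~\ref{ZCZell}. For the forward direction, let $G$ be a caterpillar with central path $p_1,\ldots,p_k$; take the initial blue vertex to be a leaf of $G$ adjacent to $p_1$ (or $p_1$ itself if it is already a leaf). The initial leaf rule-1 forces $p_1$, and inductively: once $p_i$ is blue, every pendant leaf attached to $p_i$ has only the blue neighbor $p_i$ and self-forces by rule~2, after which $p_i$'s unique remaining white neighbor is $p_{i+1}$ and rule~1 advances along the spine. Hence $\Zell(G)=1$.

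\textbf{Reverse direction of the first claim.} Suppose $\Zell(G)=1$ with initial vertex $v_0$. That $G$ is connected follows because a vertex in a separate component never acquires a blue neighbor, and so is either isolated (disallowing rule~2) or has a perpetually white neighbor (blocking rule~2). That $G$ is acyclic follows from analyzing the 2-core $H$ of any cycle: since $\delta(H)\ge 2$, the first $H$-vertex $h$ to become blue admits no further $H$-progress, for rule~2 fails on any white $H$-vertex (it has another white $H$-neighbor) and rule~1 for a second $H$-vertex would require a blue forcer that either lies in $H$ (contradicting the choice of $h$) or lies in the attached forest $V(G)\setminus V(H)$, whose traced-back forcing chain in turn requires an earlier-blue $H$-vertex, again a contradiction. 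Finally, the tree $G$ must be a caterpillar, for otherwise $G$ contains a vertex $c$ with three non-leaf neighbors $u_1,u_2,u_3$ having further neighbors $w_1,w_2,w_3$; removing $c$ leaves disjoint subtrees $T_1,T_2,T_3$ (among others), at most one of which contains $v_0$, and after $c$ becomes blue, each $u_i$ in the two inaccessible subtrees can neither be rule-1 forced (from $c$, since $c$ retains multiple white neighbors among the $u_j$'s; from inside $T_i$, which is still all white) nor rule-2 forced (since $w_i$ remains white), so some $u_i$ never turns blue, contradicting $\Zell(G)=1$.

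\textbf{Second and third claims, and main obstacle.} By Remark~\ref{ZCZell}, $\ZC(\GC G C)=\Zell(G(\PP))$, so by the first claim this equals $1$ iff $G(\PP)$ is a caterpillar. The coupling pairs $\{v,\pp(v)\}$ form a perfect matching of $G(\PP)$ by construction, so this is equivalently phrased as ``$G(\PP)$ is a caterpillar containing a perfect matching'', giving the second claim. For the third claim, assume $G$ is connected and $\ZC(\GC G C)=1$; then $G(\PP)$ is a caterpillar, hence a tree, and $G$ is a connected spanning subgraph of it, so $G=G(\PP)$ (a tree has no proper spanning connected subgraph). Therefore every coupling pair is already an edge of $G$, $\PP=\MM$ for the unique perfect matching $M$ of the caterpillar $G$, and the converse is immediate from the first claim. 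The principal technical obstacle is the subdivided-$K_{1,3}$ argument in the reverse direction: the interaction of rule~1 and rule~2 across the three branches, combined with the confluence of $\Zell$-forcing (so that a single stuck order implies no order succeeds), must be handled to rule out non-caterpillar trees.
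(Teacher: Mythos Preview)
Your proof is correct and takes a genuinely different route from the paper for the reverse direction of the first claim. The paper invokes $\Zell(G)\ge\Z_+(G)$ from \cite{param} and the fact that $\Z_+(G)=1$ forces a connected $G$ to be a tree from \cite{Zplus}, then exhibits a specific forcing order along a spine to conclude the tree is a caterpillar. You instead give a self-contained argument: acyclicity via the 2-core, and non-caterpillar trees ruled out by a subdivided-$K_{1,3}$ obstruction. Your route avoids external citations and makes the combinatorics of $\Zell$ more explicit; the paper's is shorter at the cost of those citations. The second and third claims are handled identically in spirit, and your observation that a connected spanning subgraph of a tree must equal the tree is slightly crisper than the paper's edge-count.

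One point in your 2-core argument needs more care. When a blue $u\notin H$ rule-1 forces the second $H$-vertex $h'$, the phrase ``traced-back forcing chain requires an earlier-blue $H$-vertex'' does not quite hold when $v_0$ itself lies in the pendant tree $T'$ containing $u$: in that case the chain back to $v_0$ never touches $H$. The clean fix uses two observations: (i) for connected $G$ the 2-core $H$ is itself connected, so each component $T'$ of $G-V(H)$ attaches at a \emph{unique} vertex $h_{T'}\in H$, and $u$ adjacent to $h'\in H$ forces $h_{T'}=h'$; (ii) if $v_0\notin T'$, no vertex of $T'$ can turn blue before $h_{T'}$ does. Then $u\in T'$ blue with $h'=h_{T'}$ still white contradicts (ii) when $v_0\notin T'$; and when $v_0\in T'$, the \emph{first} $H$-vertex $h$ must already be $h_{T'}=h'$, contradicting $h\ne h'$. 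With this patch your acyclicity argument is complete.
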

\bpf Let $T$ be a caterpillar.  Then one endpoint of a central path is a $\Zell$ forcing set, so $\Zell(T)=1$.  If $T$ has a perfect matching and  $\MM$ is the coupling defined by the perfect matching, then $T(\MM)=T$ and $\ZC(\GC T M)=\Zell(T)=1$.

Suppose  that $\Zell(G)=1$, which implies $G$ is connected. 
Then  $\Z_\ell(G)\ge \Z_+(G)$ \cite{param}, so $G$ is a tree \cite{Zplus}.  We show $G$ has a central path and every vertex is on the central path {or adjacent to a vertex on the central path}.  Choose a minimum  $\Zell$-forcing set $\{u_1\}$ for $G$ and let ${U}=\{u_1\}$.  Assume  
the forces so far {that are not white-vertex forces} are $u_1\to\dots\to u_k$. 
If every neighbor of $u_k$  has degree one, then each can force itself and $G$ is a caterpillar with central path $G[\{u_1,\dots,u_k\}]$.  If not, then 
every white neighbor of $u_k$ except  one has degree one  and so can force itself; we call the one neighbor that does not have degree $u_{k+1}$ and note that $u_k$ can now force $u_{k+1}$.
This process shows that $G$ is a caterpillar.

 If   $G(\PP)$  is not a caterpillar with a perfect matching, then $G(\PP)$ is not a caterpillar, so $\ZC(G^{\PP})=\Zell(G(\PP))>1$.

Finally, assume $G$ is connected and $\ZC(G^{\PP})=1$.  Then $G$ has at least $n-1$ edges and $G(\PP)$  is  a caterpillar with a perfect matching, implying $G$ is  a caterpillar with a perfect matching $M$ and  $\PP$ is the coupling defined by $M$.
\epf

\begin{rem}
    A caterpillar $T$  with  maximal central path $P$ has a perfect matching if and only if
    \ben
    \item Every vertex has degree at most three.
    \item If the degree three vertices of $T$ are deleted from $P$, the subpaths of $P$ that remain (if any) each have even order.
    \een
     A comb $P_p\circ K_1$ is an example of a caterpillar with a perfect matching. See Figure \ref{f:catmatch} for another example.
\end{rem}

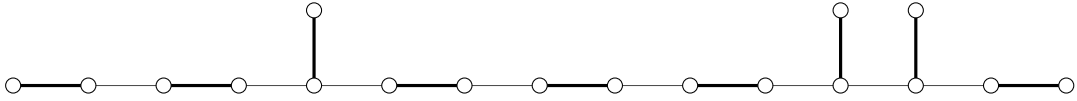
\begin{figure}[h!]
    \centering
\begin{tikzpicture}
\draw[opacity = 0.7](0,0)--(14,0); \foreach\i in{4,11,12}{\draw(\i,0)--(\i,1);}
\foreach\i in{4,11,12}{\draw[very thick](\i,0)--(\i,1);} \foreach\i in{0,2,5,7,9,13}{\draw[very thick](\i,0)--(\i+1,0);}
\foreach\i in{0,1,...,14}{\draw[fill=white](\i,0)circle(0.1);} \foreach\i in{4,11,12}{\draw[fill=white](\i,1)circle(0.1);}

\end{tikzpicture}
\caption{A caterpillar with a perfect matching (black edges form a  perfect matching).}
\label{f:catmatch}
\end{figure}

 We can now solve the SIEG for caterpillars with perfect matchings. The next result is immediate from Theorem \ref{prop_zer_forc_vs_mult},  Corollary \ref{AllSimple}, and Theorem \ref{t:ZC1}.

\begin{cor}\label{c:ZCsimple} 
 Let $\GC G C$ be a coupled graph of order $2p$ such that $\MC(\GC G C)=1$.  Then  $\GC G I$ allows  a multi-set of $p$ positive  real numbers as a symplectic spectrum if and only if the elements are distinct. In particular, this applies to every 
 caterpillar  that contains a perfect matching (with  the coupling  defined by  the perfect matching).  
\end{cor}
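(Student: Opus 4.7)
The plan is to prove the two sentences of the corollary separately, with the second being a straightforward application of the first. The whole argument should be essentially immediate from the three cited results, with no real new content beyond assembling them.

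For the first sentence, I would prove the two implications separately. The ``only if'' direction is essentially a restatement of the definition: if $\MC(\GC G C)=1$, then no matrix $N\in\symp(G^L)$ (for a representative labeling $L$) has a repeated symplectic eigenvalue, so any multi-set of $p$ positive reals realized as $\spspec(N)$ must consist of $p$ distinct numbers. For the ``if'' direction, let $\{\lambda_1,\dots,\lambda_p\}$ be any set of $p$ distinct positive reals and let $L$ be a representative labeling of $\GC G C$. Then Corollary \ref{AllSimple} supplies a matrix $N\in\symp(G^L)$ with $\spspec(N)=\{\lambda_1,\dots,\lambda_p\}$ (in fact with the SSSP, but we do not need this here), so $\GC G C$ allows this spectrum.

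For the second sentence (the caterpillar statement), suppose $G$ is a caterpillar containing a perfect matching $M$, and let $\PP=\MM$ be the coupling defined by $M$. Then $G(\PP)=G$ is a caterpillar containing a perfect matching, so Theorem \ref{t:ZC1} gives $\ZC(\GC G M)=1$. Combining this with Theorem \ref{prop_zer_forc_vs_mult} yields
\[
1 \le \MC(\GC G M) \le \ZC(\GC G M) = 1,
\]
where the left inequality is trivial because every matrix has at least one symplectic eigenvalue with multiplicity at least one. Hence $\MC(\GC G M)=1$, and the first sentence of the corollary applies to conclude that $\GC G M$ allows exactly those multi-sets of $p$ positive reals whose elements are distinct.

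The main (and only) obstacle would be making sure the trivial lower bound $\MC(\GC G C)\ge 1$ is invoked so that the inequality $\MC \le \ZC = 1$ forces equality; otherwise the argument is just a straightforward chain of citations, so I expect the proof to be just a few lines long.
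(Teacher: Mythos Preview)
Your proposal is correct and follows exactly the paper's approach: the paper states the result is ``immediate from Theorem \ref{prop_zer_forc_vs_mult}, Corollary \ref{AllSimple}, and Theorem \ref{t:ZC1},'' and you have simply spelled out the chain of implications those citations provide.
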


As is the case for (standard)  zero forcing and  maximum nullity, it is possible to have the maximum symplectic multiplicity less than the coupled zero forcing number of a coupled graph.  But the situation is actually somewhat different, because for a coupled graph of order $2p$,  $\MC(\GC G C)\le p$ and $\ZC(\GC G C)\le 2p-1$ {and both bounds are sharp as illustrated by $K_{2p}$: We saw in Example \ref{random} that $\MC(K_{2p}^\PP)=p$ for any coupling, and} $\ZC(\GC {K_{2p}} C)= 2p-1$ (since $ {\ZC(\GC {K_{2p}} C)=\Zell(K_{2p})=} 2p-1$).  

 \section{Concluding remarks}\label{s:conclude}
In this work we have defined the ISEP-$G$ and have solved it for at least one coupling of more families of graphs than the IEP-$G$ has been solved for, with the latter involving more than a dozen authors over more than twenty years.
The following coupled graphs are symplectic spectrally arbitrary and thus the ISEP-$G$ is solved for these coupled graphs.
\bit
\item $K_{2p}^\PP$ where $\PP$ is any coupling.
\item $(K_p\du K_p)^\II$  where $\II$ pairs the vertices of one $K_p$ with the vertices of the other $K_p$.
\item $(\OL{K_p}\vee K_p)^\MM$ where $\MM$ couples each vertex of $\OL{K_p}$ with a vertex in $K_p$.
\item $K_{p,p}^\MM$  where $\MM$ couples each vertex in one partite set with a vertex in the other partite set.
\item $\tp_{2p}^\II$. 
\eit 
Other graphs for which the ISEP-$G$ is solved: 
\bit
\item All graphs of order 4  with all couplings (see Section \ref{s:order4} for details of what spectra are allowed).
\item Path $P_{2p}^\MM$ where $\MM$ is defined by the perfect matching (all eigenvalues distinct).
\item $T^\MM$ where  $T$ is a caterpillar with a perfect matching  and $\MM$ is defined by the perfect matching  (all eigenvalues distinct).
\item  $(K_p\circ K_1)^\MM$ where $\MM$  couples each leaf with its unique neighbor (any symplectic spectrum with at least two distinct symplectic eigenvalues).
\item Comb $(P_p\circ K_1)^\MM$ where $\MM$  couples each leaf with its unique neighbor (all eigenvalues distinct).
\item Sun $(C_p\circ K_1)^\MM$ where $\MM$  couples each leaf with its unique neighbor (see Corollary \ref{CpcircK1} for details of what spectra are allowed).
\item $(H\circ K_1)^\MM$ where $\MM$  couples each leaf with its unique neighbor and the IEP-$G$ is solved by specifying which ordered multiplicity lists are permitted and all spectra are allowed for these lists (depends on the spectra allowed by $H$).
\eit

These solutions have been possible through an assortment of tools we have developed for the ISEP-$G$, including the Strong Symplectic 
Spectral Property, results on symplectic positive definite matrices, and coupled zero forcing. Since labeled graphs must be used for symplectic spectra, we have introduced couplings to manage symplectically equivalent labeling, thereby substantially reducing the number of labelings that need to be considered.

 We have only begun the study of the ISEP-$G$ and many natural questions remain.

\appendix
\section{Appendix}\label{appendix}
 This section contains the proofs of the theoretical 
properties associated with the SSSP.

\bigskip\noindent
{\bf Proof of Theorem \ref{t:verification} (SSSP Verification Matrix)}. \ms

\noi
 Recall that  $N$ is a $2p\times 2p$ positive definite matrix with labeled graph $G^L$.
Note that $\Xi(N)$ is a $(2p^2-p -|E(G)|) \times (2p^2+p)$
matrix.  Thus the equivalence of \eqref{b:verification} and \eqref{c:verification} follows from elementary linear algebra. 

Note that  
\[ 
\uvec^{-1} (\mbox{CS}( \Phi(N)))=\{ S^\top N+NS: S \in \spLA(2p)\}
,\] 
where $\mbox{CS}(\Phi(N))$  denotes the column space of
$\Phi(N)$. Also, $\uvec(\Span \SG)$  is
the subspace of $\mathbb{R}^{2p^2+p}$ consisting of those vectors 
having zero coordinate in each row of $\Phi(N)$ that 
is a row included in $\Xi(N)$.
The equivalence of \eqref{a:verification}  and \eqref{b:verification}  now follows. 
\qed\ms

The next two proofs are  essentially the same as the proofs of the Supergraph Theorem  and Bifurcation Theorem for the IEP-$G$ with the 
functions $f$  and $g$ given below  suitably modified.
 In these, we use the word ``neighborhood'' in 
the topological sense. That is, a neighborhood of a point $p$ is an open set in the pertinent topological space that contains $p$. \ms

\medskip
\noi {\bf Proof of Theorem \ref{supergraph} (Supergraph Theorem).}

\medskip
\noi
Recall  $G^L$ is a labeled graph of order $2p$,  $N\in\symp(G^L)$ has the SSSP, and   
 $H^L$ is  obtained from $G^L$ by inserting some edges.
Define the function $f \colon \spLA (2p) \times  \Span (\SG) \rightarrow \symm(2p)$
by 
\[ f(S, B)= e^{S\trans} N e^S -B.\]
Note that $f(O,O)=N$ and the linearization of $f$ at $(0,0)$
is the function $L \colon \spLA (2p) \times  \Span(G) \rightarrow \symm(2p)$ by 
\[ 
L(S,B)= N + NS + S\trans N -B.
\]
As $N$ has the SSSP, $L$ is a surjection.
As $f$ is continuous, there exists an neighborhood $U$ containing $(0,0)$ 
such that for all $(S,B) \in U$, the  $(i,j)$-entry of $f(S,B)$ is nonzero whenever
the $(i,j)$-entry of $N$ is nonzero.  By the Inverse Function theorem, there exists 
a neighborhood $V$ of $(0,0)$ contained in $U$ and a neighborhood $W$ of $N$ 
contained in $\symm(2p)$ such that $f(V)=W$.  Let $C$ be the adjacency matrix of the 
graph on $2p$ vertices whose edges are those in $H$ but not in $G$.  For $\epsilon$
sufficiently small, $N + \epsilon C \in W$.  Hence there exists $(S,B) \in V$
such that $e^{S\trans}Ne^S-B= N + \epsilon C$.  Equivalently, 
$e^{S \trans} Ne^{S} = N +B +\epsilon C$.  As $S \in \spLA (2p)$, $e^S$ is a symplectic matrix, 
and hence $e^{{S} \trans} N e^{S} $ and $N$ have the same symplectic eigenvalues.  The choice of 
$\epsilon$, $V$, $W$ and $C$ guarantee that the  labeled graph of $N + B + \epsilon C$ is $H^L$.
Therefore $e^{S{\trans}}N e^S$ is a matrix with graph $H$ and the same symplectic eigenvalues as 
$A$. 
\qed

\medskip
\noi {\bf Proof of Theorem \ref{Thm:bifurcation} (Bifurcation Theorem).}

\medskip\noi
Define the function $g\colon \spLA (2p) \times  \Span (\SG) \rightarrow \symm(2p)$
by: 
\[ g(S, B)= e^{S\trans} (N-B) e^S.\]
Note that $g(O,O)=N$ and the linearization of $g$ at $(0,0)$
is the function $L\colon \spLA (2p) \times \Span (G) \rightarrow \symm(2p)$ by 
\[ 
L(S,B)= N + NS + S\trans N -B.
\]
As $N$ has the SSSP, $L$ is a surjection.
As $g$ is continuous, there exists a neighborhood $U$ of $(0,0)$ 
such that for all $(S,B) \in U$, the  $(i,j)$-entry of $g(S,B)$ is nonzero whenever
the $(i,j)$-entry of $N$ is nonzero.  By the Inverse Function theorem, there exists 
a neighborhood $V$ of $(0,0)$ contained in $U$ and a neighborhood $W$ of $N$ 
contained in $\symm(2p)$ such that $f(V)=W$. Take $\epsilon $ small enough so that 
the $\epsilon$ ball around $N$  is contained in $W$. Suppose that $\| \widehat{N} -N \|<\epsilon$.
Then there exists $(S,B) \in V$ such that $\widehat{N}= g(S,B)= e^{S\trans} (N-B) e^{S}$.
Hence $\widehat{N}$ and $(N-B)$ have the same symplectic spectrum and $N-B$ has graph $G$. 
\qed \ms

The proof of the Matrix Liberation Lemma (Theorem \ref{liberation}) uses the {next theorem} from \cite{CS}, which needs a few definitions to present. 
 Let $U\subseteq \mathbb{R}^n$
be an open subset containing $\mathbf 0 $. 
The function $f\colon U \rightarrow \mathbb{R}^m$ is \textit{totally differentiable} 
at $\bx = \mathbf 0$ if there exists a linear transformation $d_f\colon \mathbb{R}^n \rightarrow \mathbb{R}^m $ such that
\[
\lim_{{\bx}\rightarrow \mathbf 0} \frac{ \| f(\bx)-f(\mathbf 0) -d_f\bx \|}{\|{\bx}\|}= 0. 
\]
The map $d_f$, if it exists, is unique and is called the \textit{total derivative of $f$} at ${\bx}={\bf 0}$. 

\begin{thm} {\rm \cite{CS}}
\label{IVTLiberate}
Let $f\colon\mathbb{R}^n \rightarrow  
\mathbb{R}^m$ be a smooth function
whose total derivative  $d_f$ at $\mathbf x= \mathbf 0$ exists and  let $\mathbf d= d_f(\mathbf u)$ be a nonzero vector 
in $\mbox{\rm Im}({d_f})$. 
If 
\begin{equation}
\label{cond} 
\mbox{\rm Im}(d_f) + \Span(\{ \mathbf e_i\colon i \mbox{ in support of $f(\mathbf 0)$ or the support of $\mathbf d$}\})  
= \mathbb{R}^n,
\end{equation}
then there exists $\widehat{\mathbf{x}} \in \mathbb{R}^m$
such that the support of $f(\widehat{\mathbf{x}})$ 
is the union of 
the supports of $f(\mathbf 0)$ and $\mathbf d$.  Moreover, $\widehat{\mathbf{x}}$ 
can be chosen so that the 
\begin{equation}
\label{cond'}
\mbox{\rm Im}(
d_f({\widehat{\mathbf{x}}}) + \Span(\{ \mathbf e_i \colon i \mbox{ in support of $f(\mathbf 0)$ or the support of $\mathbf d$}\})  
= \mathbb{R}^n.
\end{equation}
\end{thm}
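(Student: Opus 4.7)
The plan is to prove Theorem~\ref{IVTLiberate} by translating the hypothesis into a submersion condition and then tracing a smooth curve inside the resulting fiber. Set $S := \operatorname{supp}(f(\mathbf 0)) \cup \operatorname{supp}(\mathbf d)$, let $V := \Span\{\mathbf e_i : i \in S\} \subseteq \R^m$, and let $\pi \colon \R^m \to \R^m/V$ be the quotient projection (equivalently, coordinate projection onto the indices outside $S$). Reading \eqref{cond} as $\im(d_f) + V = \R^m$, the hypothesis is equivalent to surjectivity of $\pi \circ d_f$; in other words, $\pi \circ f$ is a submersion at the origin.

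I would then apply the implicit function theorem to $\pi \circ f$. Because $\operatorname{supp}(f(\mathbf 0)) \subseteq S$ implies $\pi(f(\mathbf 0)) = 0$, the fiber $(\pi \circ f)^{-1}(0)$ is, in a neighborhood of $\mathbf 0$, a smooth submanifold of $\R^n$ whose tangent space at $\mathbf 0$ equals $\ker(\pi \circ d_f) = d_f^{-1}(V)$. Since $\mathbf d = d_f(\mathbf u) \in V$, the vector $\mathbf u$ lies in this tangent space, so one can find a smooth curve $\gamma \colon (-\epsilon, \epsilon) \to \R^n$ with $\gamma(0) = \mathbf 0$, $\gamma'(0) = \mathbf u$, and $(\pi \circ f)(\gamma(t)) = 0$ for every $t$. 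The last identity gives $f(\gamma(t)) \in V$, hence $\operatorname{supp}(f(\gamma(t))) \subseteq S$ for all $t$ in the domain of $\gamma$.

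Next I would verify that $\operatorname{supp}(f(\gamma(t))) = S$ for all sufficiently small nonzero $t$. A Taylor expansion along $\gamma$ gives $f(\gamma(t)) = f(\mathbf 0) + t\, d_f(\mathbf u) + o(t) = f(\mathbf 0) + t\mathbf d + o(t)$. For $i \in \operatorname{supp}(f(\mathbf 0))$, continuity forces $f_i(\gamma(t)) \to f_i(\mathbf 0) \neq 0$ as $t \to 0$, so $f_i(\gamma(t)) \neq 0$ for $|t|$ small; for $i \in \operatorname{supp}(\mathbf d) \setminus \operatorname{supp}(f(\mathbf 0))$, one has $f_i(\gamma(t)) = t d_i + o(t)$ with $d_i \neq 0$, which is also nonzero for $|t|$ small and nonzero. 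Setting $\widehat{\mathbf x} = \gamma(t)$ for such $t$ yields a point at which the support of $f$ equals $S$. For the moreover assertion, note that surjectivity of $\pi \circ (d_f)_{\mathbf x}$ is an open condition in $\mathbf x$, since the set of surjective linear maps is open and $d_f$ depends continuously on $\mathbf x$; as this condition holds at $\mathbf 0$, it persists on a neighborhood of $\mathbf 0$, and shrinking $|t|$ further places $\widehat{\mathbf x}$ in that neighborhood, establishing \eqref{cond'}.

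The main obstacle is producing $\widehat{\mathbf x}$ at which $\pi \circ f$ vanishes \emph{exactly}, not merely up to $o(t)$, while still retaining the leading term $t\mathbf d$ on the coordinates indexed by $\operatorname{supp}(\mathbf d)$. The naive path $t \mapsto t\mathbf u$ satisfies $f(t\mathbf u) = f(\mathbf 0) + t\mathbf d + o(t)$, which has the right leading order but can produce nonzero $o(t)$ contributions in coordinates outside $S$, artificially inflating the support. Constructing $\gamma$ via the implicit function theorem cancels those unwanted $o(t)$ contributions while preserving $\gamma(t) = t\mathbf u + O(t^2)$, so the expansion $f(\gamma(t)) = f(\mathbf 0) + t\mathbf d + o(t)$ still holds and the desired support identity follows from it.
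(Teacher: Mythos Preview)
The paper does not supply its own proof of Theorem~\ref{IVTLiberate}; the result is quoted from \cite{CS} and used as a black box in the proof of the Matrix Liberation Lemma in Appendix~\ref{appendix}. So there is no in-paper argument to compare your proposal against.

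That said, your argument is correct and is a clean, standard route to this kind of statement. Rewriting hypothesis~\eqref{cond} as surjectivity of $\pi\circ d_f$ (with $\pi$ the projection killing the coordinates in $S$) is exactly the right translation; the submersion theorem then makes $(\pi\circ f)^{-1}(0)$ a smooth submanifold near $\mathbf 0$ with tangent space $d_f^{-1}(V)$, and since $\mathbf u$ lies in that tangent space you can realize it as the initial velocity of a curve $\gamma$ inside the fiber. The two-term Taylor expansion $f(\gamma(t))=f(\mathbf 0)+t\mathbf d+o(t)$ then pins down the support coordinate by coordinate, while membership in the fiber guarantees no spurious coordinates outside $S$ can appear. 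The ``moreover'' clause follows, as you note, from the openness of the full-rank condition and the smoothness of $\mathbf x\mapsto (d_f)_{\mathbf x}$. The only cosmetic point is that the statement as printed has $\mathbb R^n$ on the right-hand side of \eqref{cond} and \eqref{cond'} and places $\widehat{\mathbf x}$ in $\mathbb R^m$; you have silently (and correctly) read these as $\mathbb R^m$ and $\mathbb R^n$, respectively.
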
 

\medskip
\noi
{\bf Proof of Theorem \ref{liberation} (Matrix Liberation Lemma).}

\medskip\noi
{Recall that $N\in\symp(G^L)$, $R$ is a matrix in the subspace $\{ NM + M^\top N \colon M \in \spLA(2p) \}$,
and  $G^L_R$ is the labeled graph of $G^L$ in the direction of $R$.}  Assume that $N$ has the SSSP { with respect to $G^L_R$}. 
We apply Theorem \ref{IVTLiberate} to the function  
$f\colon \spLA (2p)\rightarrow \symm(2p)$
by $f(S,B)=e^{S^\top}Ne^S$,  and observe that the hypothesis 
(\ref{cond}) is equivalent to 
$\mbox{Im}(d_f)+\Span(\mathcal{S}(G_{R}))= \symm(2p)$.  \qed 

\bigskip
\noindent
{\bf Acknowledgements:} 
The authors thank the Pacific Institute for the Mathematical Sciences for partial support of H.~Gupta as a post-doctoral fellow, and the American Institute of Mathematics. 

\end{document}